\newtheorem{theorem}{Theorem}[section]
\newtheorem{corollary}{Corollary}[theorem]
\newtheorem{lemma}[theorem]{Lemma}
\newtheorem{assumption}{Assumption}
\newtheorem{definition}{Definition}[section]
\newtheorem{remark}{Remark}
\newcommand{\Uad}{U_{\text{ad}}}
\newcommand{\Yad}{Y_{\text{ad}}}
\newcommand{\Fad}{F_{\text{ad}}}
\newcommand{\Yset}{\mathcal{Y}}
\newcommand{\Aa}{\mathcal{A}^{a}}
\newcommand{\Ab}{\mathcal{A}^{b}}
\newcommand{\N}{\mathbb{N}}
\newcommand{\R}{\mathbb{R}}
\newcommand{\MO}{{\mathcal{M}(\bar{\Omega})}}
\newcommand{\CO}{{C(\bar{\Omega})}}
\newcommand{\Y}{{H^1(\Omega)\cap \CO}}
\newcommand{\U}{{L^2(\Omega)}}
\newcommand{\by}{\bar{y}}
\newcommand{\bu}{\bar{u}}
\newcommand{\bp}{\bar{p}}
\newcommand{\bmu}{\bar{\mu}}
\newcommand{\probALk}{$(P_{AL}^k)$\xspace}
\newcommand\norm[1]{\left\lVert#1\right\rVert}
\newcommand\dx{\,\mathrm{d}x}
\pgfplotsset{width=7cm,compat=1.3}
\title{A Lagrange Multiplier Method for Semilinear Elliptic State Constrained Optimal Control Problems	\thanks{This research was supported by the German Research Foundation (DFG) within the priority program "Non-smooth and
        Complementarity-based Distributed Parameter Systems: Simulation and Hierarchical
        Optimization" (SPP 1962) under
		grant number WA 3626/3-1 and NE 1941/1-1.}}
\author{Veronika Karl\footnote{Institut für Mathematik, Universität Würzburg, 97074 Würzburg  \newline
        V. Karl: \texttt{veronika.karl@mathematik.uni-wuerzburg.de}\newline
        I. Neitzel: \texttt{neitzel@ins.uni-bonn.de}}
        \and
        Ira Neitzel$^*$
        }
\author{
	Veronika Karl%
	\thanks{Universität Würzburg, Institut für Mathematik,
		Emil-Fischer-Str.\ 30, 97074 Würzburg, Germany; \{veronika.karl,daniel.wachsmuth\}@mathematik.uni-wuerzburg.de}
	\and
	Ira Neitzel%
	\thanks{Rheinische Friedrich-Wilhelms-Universität Bonn, Institut für Numerische Simulation, Wegelerstr.
      6, 53115 Bonn, Germany; neitzel@ins.uni-bonn.de}
       \and
       Daniel Wachsmuth$^\dagger$
}
\begin{document}
\maketitle
\begin{abstract}
\noindent In this paper we apply an augmented Lagrange method to a class of semilinear elliptic optimal control problems with pointwise state constraints. We show strong convergence of subsequences of the primal variables to a local solution of the original problem as well as weak convergence of the adjoint states and weak* convergence of the multipliers associated to the state constraint. Moreover, we show existence of stationary points in arbitrary small neighborhoods of local solutions of the original problem. Additionally, various numerical results are presented.
\end{abstract}

{\small
\noindent{\bf Keywords: } optimal control, semilinear elliptic operators, state constraints, augmented Lagrange method.
\bigskip\\
\noindent{\bf AMS subject classification: }
49M20, 
65K10, 
90C30. 
}%
\section{Introduction}
In this paper, the solution of an optimal control problem subject to a semilinear elliptic state equation and pointwise control and state constraints will be studied. The control problem is non-convex due to the nonlinearity of the state equation. The problem under consideration is given by
\begin{align}
\min\ J(y,u):=\frac{1}{2}||y-y_d||_{L^2(\Omega)}^2+\frac{\alpha}{2}||u||_{L^2(\Omega)}^2
\label{eq:ALS:optcontprob}\tag{$P$}
\end{align}
subject to
\begin{alignat}{2}
Ay + d(y) &=u &\quad& \text{ in }\Omega,\label{eq:ALS:pdeprob}\\
\partial_{\nu_A} y&= 0 && \text{ on }\Gamma,\notag\\
        y &\leq \psi &&\text{ in } \Omega,\notag\\
        u&\in \Uad.\notag
\end{alignat}
Here, $A$ denotes a second-order elliptic operator while $d(y)$ is a nonlinear term in $y$.
The setting of the optimal control problem will be made precise in Section \ref{sec:problem}.
\smallskip\newline
Optimal control problems with pointwise state constraints suffer from low regularity of the respective Lagrange multipliers, see \cite{Casas1986contEllProbStateConst,CasasReyesTroeltzsch2008soscSemilinState} for Dirichlet problems and \cite{Casas1993boundContSemilinState} for Neumann problems. The multiplier $\bmu$ associated to the state constraint is a Borel measure. Under additional assumptions it has been proven in \cite{CasasMateosVexler2014newRegOptContStateConst} that the multiplier satisfies $H^{-1}(\Omega)$-regularity. These assumptions are satisfied, e.g., for $\psi$ constant.

For linear quadratic optimal control problems the literature is quite rich. Quite a number of different regularization approaches have been investigated to overcome the problems that occur when solving problems of this type. We want to mention here penalization-based approaches   \cite{ItoKunisch2003semismoothState,HintermuKunisch2006pathFollowing, HintermHinzw2009moreauYosidaStateConst, HintermKunisch2009optConstStateDerivative, HintermSchielaWollner2014MYpath} and interior point methods \cite{Schiela2013intPointStateConst,KruseUlbrich2015interiorPoint}. It is a common way to reach higher regularity of the Lagrange multiplier by replacing the pure state constraints by mixed control-state-constraints as it has been done by applying Lavrentiev-regularization \cite{MeyerRoeschTroeltzsch2006optContRegState,HinzeMeyer2010lavrentiev}, or the virtual control approach. This approach has been introduced by Krumbiegel and Rösch in \cite{KrumbiegelRoesch2009virtualState} for boundary control problems. In \cite{CheredKrumbiegRoesch2008errorEstLavrentievEllipticOptCont} the approach has been adapted to linear elliptic distributed control problems and extended in \cite{KrumbiegelNeitzelRoesch2012regSemilinOptCont} to distributed elliptic optimal control problems governed by a semilinear state equation.

In lots of these approaches, the state constraints are relaxed in a suitable way, but not removed completely from the set of explicit constraints. Differently, by applying augmented Lagrange methods the state constraints are replaced by a penalized term augmenting the inequality constraint in the cost functional \cite{KanzowSteckWachsmuth16augmented,KarlWachsmuth2017augmented}. In our recent work \cite{KarlWachsmuth2017augmented} an adapted augmented Lagrange method has been analyzed in the general setting of linear elliptic optimal control problems with state constraints. Here, the presented algorithm solves sub-problems that are control constrained only. Compared to the unregularized problem the occurring sub-problems can be solved by efficient optimization algorithms. Establishing a special update rule that performs the classical augmented Lagrange update only if a sufficient decrease of the maximal constraint violation and the violation of the complementarity condition is achieved allowed us to guarantee the $L^1$-boundedness of generated multiplier approximations. The goal of the present paper is to extend this work to a larger class of optimal control problems in order to solve non-convex elliptic problems. Non-convexity arises from a semilinear state equation yielding a nonlinear solution operator.\smallskip\newline
In every iteration of the augmented Lagrange algorithm one has to solve the following sub-problem
\begin{align*}
\underset{y,u} \min &\ J(y,u) +\frac{1}{2\rho}\norm{(\mu+\rho(y-\psi))_+}^2_{L^2(\Omega)}\\
\text{s.t.}\quad &y=S(u) \qquad\text{and } \qquad u\in \Uad,
\end{align*}
where $\mu$ is a function given in $L^2(\Omega)$ and $S$ denotes the solution operator of the semilinear partial differential equation given in \eqref{eq:ALS:pdeprob}. The convergence analysis of solution algorithms of nonconvex optimal control problems suffers from non-uniqueness of local and global solutions. Due to the nonlinearity of the state equation uniqueness of the optimal solution can not be expected for the unregularized as well as for the augmented Lagrange sub-problem.
In addition, it is generally possible that critical points, which are no local solutions, are computed.
The sub-problem may have stationary points arbitrarily far from a given local solution $\bu$, and there is no rule to determine which
of these points has to be chosen in the solution process of the sub-problem in order to guarantee convergence.
That is why our first main result (Theorem \ref{theo:convKKT}) states (global) subsequential convergence towards KKT points.

We are able to extend this result under certain second-order conditions in Section \ref{sec:SSC_conv}. We will prove that the sequence that is generated by our algorithm converges to a local solution of the original problem. Furthermore, we will derive second-order optimality conditions for the sub-problem that allow us to derive local uniqueness of stationary points of the arising sub-problem.

In computations, one often uses the previous iterate $\bu_k$ as initial guess for the computation of the next iterate
$\bu_{k+1}$.
Hence, it is reasonable to expect that, if $\bu_k$ is near a local solution $\bu$
the remaining iterates will stay near $\bu$, too.
However, in this case one has to provide existence of a KKT point of the sub-problem in exactly this neighborhood.
Under a quadratic growth condition we are able to prove that for every fixed $\mu$ there exists a KKT point of the augmented Lagrange sub-problem near a local solution $\bu$, provided that the penalty parameter $\rho$ is large enough. Therefore, we investigate in Section \ref{sec:con_local_sol} the auxiliary problem
\begin{align*}
\underset{y,u}\min &\ J(y,u) +\frac{1}{2\rho}\norm{(\mu+\rho(y-\psi))_+}^2_{L^2(\Omega)}\\
\text{s.t.}\quad &y=S(u) \qquad\text{and } \qquad u\in \Uad,\qquad \norm{\bu-u}_\U\leq r
\end{align*}
that claims solutions that are close enough to a local solution $\bu$ of \eqref{eq:ALS:optcontprob}. We will prove that for $\rho$ large enough global solutions of this auxiliary problem are local solutions of the augmented Lagrange sub-problem and that these solutions converge to a local solution of the unregularized problem as 
the penalty parameter $\rho$ tends to infinity.\smallskip\newline
The outline of this paper is as follows: In Section \ref{sec:problem} we start collecting results about the unregularized optimal control problem. Next, in Section \ref{sec:AL} we present the augmented Lagrange method. Section \ref{sec:convKKT} is dedicated to show that every weak limit point of the sequence generated by our algorithm is a KKT point of the original problem. Further, in Section \ref{sec:con_local_sol} we construct an auxiliary problem that claims solutions near a local solution of the original problem. Exploiting appropriate properties of this auxiliary problem we prove that for $\rho$ sufficiently large solutions of the auxiliary problem are local solutions of the augmented Lagrange sub-problem. Further we show convergence rates for the arising sub-problems.
In Section \ref{sec:SSC_conv} we consider second-order sufficient conditions.
To illustrate our theoretical findings we present numerical examples in Section \ref{sec:NumExp}.
\paragraph{Notation.}
Throughout the article we will use the following notation. The inner product in $L^2(\Omega)$ is denoted by $(\cdot,\cdot)$.
Duality pairings will be denoted by $\langle\cdot,\cdot\rangle$. The dual of $C(\bar{\Omega})$ is denoted by $\mathcal{M}(\bar{\Omega})$, which is the space of regular Borel measures on $\bar{\Omega}$.  Further $(\cdot)_+:=\max(0,\cdot)$ the pointwise almost-everywhere sense. We refer to $u^*$ as a (weak) limit point of a sequence $(u_k)_k$ if there exists a subsequence $(u_{k'})_{k'}$ such that $u_{k'}\rightharpoonup u^*$. If $u^*$ is the (weak) limit of $(u_k)_k$, then the whole sequence converges weakly.

\section{The Optimal Control Problem}\label{sec:problem}

Let $Y$ denote the space $Y:= H^1(\Omega)\cap \CO $, and set  $U:=\U$. We want to solve the following state constrained optimal control problem:
Minimize
\[
J(y,u):=\frac{1}{2}||y-y_d||_{L^2(\Omega)}^2+\frac{\alpha}{2}||u||_{L^2(\Omega)}^2
\]
over all $(y,u)\in Y\times \Uad$
subject to the semilinear elliptic equation
\begin{alignat*}{2}
        (Ay)(x) +d(x,y)&= u(x) &\quad& \text{in } \Omega, \\
        (\partial_{\nu_A}y)(x) &=0 && \text{on } \Gamma,
\end{alignat*}
and subject to the pointwise constraints
\begin{alignat*}{2}
        y(x)\leq \psi(x) &\quad& \text{in } \Omega,\\
        u_a(x)\leq u(x)\leq u_b(x)&&\text{in } \Omega.
\end{alignat*}
In the sequel, we will work with the following set of standing assumptions.
\begin{assumption}[\bf{Standing assumptions}]
 \begin{enumerate}
 \item Let $\Omega\subset \mathbb{R}^N$, $N=\lbrace 2,3\rbrace$ be a boun\-ded domain with $C^{1,1}$-boundary $\Gamma$ or a bounded, convex domain with polygonal boundary $\Gamma$.
  \item The given data satisfy $y_d\in L^2(\Omega)$, 
  $\psi\in C(\bar \Omega)$.
  \item The differential operator $A$ is given by
  \[
   (Ay)(x) := -\sum_{i,j=1}^N \partial_{x_j}(a_{ij}(x)\partial_{x_i} y(x)) + a_0(x)y(x)
  \]
  with $a_{i,j} \in C^{0,1}(\bar\Omega)$ and $a_0\in L^\infty(\Omega)$. Further, $a_0\geq $ a.e. $x\in\Omega$ and $a_0\neq0$.
  The operator $A$ is assumed to satisfy the following ellipticity condition: There is $\delta>0$ such that
  \[
   \sum_{i,j=1}^N a_{ij}(x)\xi_i\xi_j\ge \delta |\xi|^2 \quad \forall \xi\in \R^N, \text{ a.e. on } \Omega.
  \]
  \item The co-normal derivative $\partial_{\nu_A}y$ is given by
  \[
  \partial_{\nu_A}y=\sum_{i,j=1}^N a_{ij}(x)\partial_{x_i}y(x)\nu_j(x),
  \]
  where $\nu$ denotes the outward unit normal vector on $\Gamma$.
  \item The function $d(x,y)\colon \Omega\times\R$ is measurable with respect to $x\in \Omega$ for all fixed $y\in \R$ and twice continuously differentiable with respect to $y$ for almost all $x\in\Omega$. Moreover, for $y=0$ the function $d$ and its derivative with respect to $y$ up to order two are bounded, i.e. there exists $C>0$ such that
  \[
  \norm{d(\cdot,0)}_\infty+\norm{\frac{\partial d}{\partial y}(\cdot,0)}_\infty+\norm{\frac{\partial^2d}{\partial y^2}(\cdot,0)}_\infty\leq C
  \]
  is satisfied. Further
  \[d_y(x,y)\geq 0 \quad \text{ for almost all } x\in \Omega.\]
  The derivatives of $d$ with respect to $y$ are uniformly Lipschitz up to order two on bounded sets, i.e, there exists a constant $M$ and a constant $L(M)$, that is dependent of $M$ such that
  \[
  \norm{\frac{\partial^2d}{\partial y^2}(\cdot,y_1)-\frac{\partial^2d}{\partial y^2}(\cdot,y_2)}_\infty \leq L(M)|y_1-y_2|
  \]
  for almost every $x\in \Omega$ and all $y_1,y_2 \in [-M,M]$.\\
  Finally, there is a subset $E_\Omega\subset \Omega$ of positive measure with $d_y(x,y)>0$ in $E_\Omega\times \R$.
 \end{enumerate}
 \label{ass:standing}
\end{assumption}

\subsection{Analysis of the Optimal Control Problem }
\subsubsection{The State Equation}
A function $y\in H^1(\Omega)$ is called a weak solution of the state equation \eqref{eq:ALS:pdeprob} if it holds for all $v\in H^1(\Omega)$ 
\[
 \int_\Omega \sum_{i,j=1}^N a_{ij}(x)\partial_{x_i}y(x)\partial_{x_j} v(x) +a_0(x)y(x)\dx  +  \int_\Omega d(x,y) v(x) \dx = \int_\Omega u(x) v(x) \dx.
\]
\begin{theorem}[\textbf{Existence of solution of the state equation}]\label{theo:stateeq}
Let Assumption \ref{ass:standing} be satisfied. Then, for every $u \in L^2(\Omega)$,
the elliptic partial differential equation
\begin{equation}\label{eq375}
\begin{alignedat}{2}
Ay + d(y)&=u &\quad& \text{in }\Omega,\\
\partial_{\nu_A} y&= 0 && \text{on }\Gamma
\end{alignedat}
\end{equation}
admits a unique weak solution $y\in H^1(\Omega)\cap \CO$, and it holds
\begin{align*}
\norm{y}_{H^1(\Omega)}+\norm{y}_{\CO}\leq c\norm{u}_{L^2(\Omega)}
\end{align*}
with $c>0$ independent of $u$. If in addition $(u_n)_n$ is such that $u_n\rightharpoonup u\in L^2(\Omega)$ then the corresponding solutions $(y_n)_n$ of \eqref{eq375} converge strongly in $\Y$ to the solution $y$ of \eqref{eq375} to data $u$.
\end{theorem}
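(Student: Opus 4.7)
The plan is to establish the four claims in sequence: well-posedness in $H^1(\Omega)$, the $L^\infty$-bound, the $L^\infty$-estimate (and simultaneously $C(\bar\Omega)$-regularity), and finally the weak-to-strong continuity.

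For existence and uniqueness, the key structural feature is that the map $y\mapsto Ay+d(\cdot,y)$ is a monotone, coercive, hemicontinuous operator from $H^1(\Omega)$ into its dual. Uniform monotonicity and coercivity follow from the ellipticity of $A$, the sign condition $a_0\ge 0$, $a_0\neq 0$ (yielding a Poincaré-type bound in $H^1$ without boundary conditions), and $d_y(x,y)\ge 0$, which gives
\[
(d(\cdot,y_1)-d(\cdot,y_2),y_1-y_2)\ge 0 \qquad \forall\, y_1,y_2\in H^1(\Omega).
\]
Growth at $y=0$ is controlled by $\|d(\cdot,0)\|_\infty\le C$, so Browder--Minty yields a unique weak solution $y\in H^1(\Omega)$ for every $u\in L^2(\Omega)$. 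Testing the weak form with $y$ itself and using $d(\cdot,y)\,y\ge d(\cdot,0)\,y\ge -C|y|$ together with ellipticity and the coercivity provided by $a_0$ gives the a priori bound $\|y\|_{H^1(\Omega)}\le c\,(\|u\|_{L^2(\Omega)}+1)$; absorbing the constant in the standard way (or more precisely treating $-d(\cdot,0)$ as an additional right-hand side) yields the claimed linear estimate.

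To obtain $y\in C(\bar\Omega)$ with the corresponding bound, I would rewrite the equation as the linear Neumann problem $Ay+a_0 y=u-d(\cdot,y)+a_0 y$ (or simply $Ay=u-d(\cdot,y)$), observe that by Stampacchia/De Giorgi type arguments for elliptic Neumann problems on domains with $C^{1,1}$ or convex polygonal boundary one has $L^\infty$-regularity provided the right-hand side lies in $L^s$ for some $s>N/2$. A bootstrap argument (starting from the $H^1$-bound, using Sobolev embedding to get $y\in L^p$ for some $p>2$, then noting $|d(\cdot,y)|\le |d(\cdot,0)|+ C(\|y\|_\infty)\,|y|$) yields $y\in L^\infty$ with the stated linear estimate; this is precisely the content of standard results on semilinear Neumann problems as used in \cite{Casas1993boundContSemilinState,CasasMateosVexler2014newRegOptContStateConst}, so I would invoke them after checking the hypotheses. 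Continuity then follows from $W^{2,p}$-regularity of the linear problem once the right-hand side is known to be in $L^p$ for $p$ large.

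For the weak-to-strong continuity, let $u_n\rightharpoonup u$ in $L^2(\Omega)$ and denote the corresponding states by $y_n$, $y$. The a priori estimate implies that $(y_n)$ is bounded in $Y$, hence a subsequence converges weakly in $H^1(\Omega)$, strongly in $L^2(\Omega)$ (by Rellich), and weakly-$*$ in $L^\infty$. Subtracting the equations for $y_n$ and $y$ and testing with $y_n-y$ gives
\[
\int_\Omega \sum_{i,j} a_{ij}\partial_i(y_n-y)\partial_j(y_n-y)\dx + \int_\Omega a_0(y_n-y)^2\dx + \int_\Omega \bigl(d(\cdot,y_n)-d(\cdot,y)\bigr)(y_n-y)\dx = (u_n-u,y_n-y).
\]
The monotonicity of $d$ makes the third term nonnegative, while the right-hand side tends to zero because $u_n-u\rightharpoonup 0$ in $L^2$ and $y_n-y\to 0$ in $L^2$. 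Ellipticity and $a_0\ge 0$, $a_0\neq 0$ then force $y_n\to y$ strongly in $H^1(\Omega)$. To upgrade to strong convergence in $C(\bar\Omega)$, I would use the linear equation $A(y_n-y)=(u_n-u)-(d(\cdot,y_n)-d(\cdot,y))$: the uniform $L^\infty$-bound on $(y_n)$ and local Lipschitz continuity of $d$ give $d(\cdot,y_n)\to d(\cdot,y)$ in $L^p$ for every finite $p$, so $W^{2,p}$-regularity of the linear Neumann problem, combined with the compact embedding $W^{2,p}\hookrightarrow C(\bar\Omega)$ for $p>N/2$, yields $y_n\to y$ in $C(\bar\Omega)$. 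By uniqueness of the limit the whole sequence converges. The main obstacle, aside from carefully invoking the available regularity theory for linear Neumann problems on the admissible domains, is ensuring that the nonlinear term does not spoil the $L^\infty$-bootstrap; this is exactly where the uniform Lipschitz hypotheses on $d$ and the boundedness at $y=0$ from Assumption \ref{ass:standing} are used.
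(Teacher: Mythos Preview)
Your proposal is correct in spirit and considerably more detailed than the paper's own proof, which for existence, uniqueness, and the a priori estimate simply cites \cite[Theorem 3.1]{Casas1993boundContSemilinState}; your Browder--Minty/Stampacchia sketch essentially reproduces the content of that reference.

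For the weak-to-strong continuity the paper takes a shorter, different route: it observes that the embedding $L^2(\Omega)\hookrightarrow H^{-1}(\Omega)$ is compact, so $u_n\rightharpoonup u$ in $L^2(\Omega)$ already forces $u_n\to u$ strongly in $H^{-1}(\Omega)$; combined with the fact that right-hand sides in $H^{-1}(\Omega)$ still produce solutions in $Y=H^1(\Omega)\cap C(\bar\Omega)$ (with continuous dependence), this yields $y_n\to y$ in $Y$ in one stroke. Your argument via the subtracted equation and the energy identity is equally valid and more self-contained; the paper's argument is slicker but relies on a regularity/continuity statement that itself needs some justification.

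One genuine imprecision in your last step: you claim that $W^{2,p}$-regularity of the linear problem together with $d(\cdot,y_n)\to d(\cdot,y)$ in $L^p$ gives $y_n\to y$ in $C(\bar\Omega)$. But the right-hand side of $A(y_n-y)=(u_n-u)-(d(\cdot,y_n)-d(\cdot,y))$ contains $u_n-u$, which converges only \emph{weakly} in $L^2(\Omega)$, so you cannot conclude strong convergence of $y_n-y$ in $W^{2,p}$ from continuous dependence. The easy fix is to argue by compactness instead: the right-hand side is bounded in $L^2(\Omega)$, hence $(y_n-y)$ is bounded in $H^2(\Omega)$, and since $H^2(\Omega)\hookrightarrow C(\bar\Omega)$ compactly for $N\le 3$, precompactness together with the already identified $L^2$-limit yields $y_n\to y$ in $C(\bar\Omega)$.
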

\begin{proof}
The proof stating existence of a solution, its uniqueness, and the estimates of the norm can be found in \cite[Theorem 3.1]{Casas1993boundContSemilinState}.
The compact inclusion $L^2(\Omega)\subset H^{-1}(\Omega)$ and the fact that $u\in H^{-1}(\Omega)$ provides solutions in $\Y$ imply the additional statement.
\end{proof}
We introduce the control-to-state operator
$$ S\colon\U\rightarrow \Y,\quad u\mapsto y.$$
It is well known \cite[Theorem 4.16]{Troeltzsch2010optimal} that $S$ is locally Lipschitz continuous from $L^2(\Omega)$ to $\Y$, i.e., there exists a constant $L$ such that
\begin{align}\label{eq:S-Lipschitz}
\norm{y_1-y_2}_{H^1(\Omega)}+\norm{y_1-y_2}_{\CO}\leq L\norm{u_1-u_2}_{L^2(\Omega)}
\end{align}
is satisfied for all $u_i\in L^2(\Omega)$, $i=1,2$ with corresponding states $y_i=S(u_i)$.
We define the following sets
\begin{alignat*}{2}
\Uad&=\lbrace u\in L^\infty(\Omega)\ &&|\ u_a(x)\leq u(x)\leq u_b(x) \text{ a.e. in } \Omega\rbrace,\\
\Yad&=\lbrace y\in C(\bar{\Omega})\ &&|\ y(x)\leq \psi(x)\ \forall x\in \Omega\rbrace.
\end{alignat*}
The feasible set of the optimal control problem is denoted by
\[
\Fad = \lbrace (y,u)\in Y\times U\ |\ (y,u)\in \Yad\times \Uad,\ y=S(u)\rbrace .
\]

Using this notation the reduced formulation of problem \eqref{eq:ALS:optcontprob} is given by
\begin{align}
\underset{u\in U}\min\  f(u):=J(S(u),u),\quad s.t.\quad (S(u),u)\in \Fad.
\label{eq:optprob-gen}
\end{align}
For further use we want to recall a result concerning differentiability of the nonlinear control-to-state mapping $S$.
\begin{theorem}[\textbf{Differentiability of the solution mapping}]\label{theo:stateeq_diff}
Let Assumption \ref{ass:standing} be satisfied. Then, the mapping $S\colon L^2(\Omega) \rightarrow\Y$, that is defined by $S(u)=y$ is twice continuously Fr\'echet differentiable. Furthermore for all $u,h\in L^2(\Omega)$, $y_h=S'(u)h$ is defined as solution of
\begin{equation*}
\begin{alignedat}{2}
Ay_h+d_y(y)y_h &= h &\quad &\text{in }\Omega,\\
\partial_{\nu_A}y_h &= 0 &&\text{on } \Gamma.
\end{alignedat}
\end{equation*}
Moreover, for every $h_1,h_2\in L^2(\Omega),\ y_{h_1,h_2}= S''(u)[h_1,h_2]$ is the solution of
\begin{equation*}
\begin{alignedat}{2}
Ay_{h_1,h_2}+d_y(y)y_{h_1,h_2} &= -d_{yy}(y)y_{h_1}y_{h_2} &\quad &\text{in }\Omega,\\
\partial_{\nu_A}y_{h_1,h_2} &= 0 &&\text{on } \Gamma,
\end{alignedat}
\end{equation*}
where $y_{h_i}=S'(u)h_i, i=1,2$.
\end{theorem}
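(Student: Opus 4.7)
The plan is to prove the theorem by applying the implicit function theorem to a suitable reformulation of the state equation. Define the nonlinear mapping
\[
G\colon Y \to L^2(\Omega),\qquad G(y) := Ay + d(\cdot,y),
\]
where $A$ and the Neumann boundary condition are incorporated in the weak sense. Then by Theorem \ref{theo:stateeq} the relation $S(u)=y$ is equivalent to $G(y)=u$, so once $G$ is shown to be $C^2$ with invertible derivative, $S=G^{-1}$ inherits $C^2$ regularity and the formulas for $S'$ and $S''$ follow by differentiating the identity $G(S(u))=u$.

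First I would establish that $G$ is twice continuously Fr\'echet differentiable. The linear part $y\mapsto Ay$ is bounded from $Y$ to $L^2(\Omega)$, and hence smooth. For the nonlinear part I would invoke the Nemytskii theory: since $Y\hookrightarrow C(\bar\Omega)$, every $y\in Y$ takes values in a bounded set, and the assumptions in Assumption \ref{ass:standing}(5) — namely $d(\cdot,0)$, $d_y(\cdot,0)$, $d_{yy}(\cdot,0)$ bounded, together with the uniform Lipschitz property of $d_{yy}$ on bounded sets — are exactly the classical sufficient conditions for the Nemytskii operator $y\mapsto d(\cdot,y)$ to be of class $C^2$ as a map $C(\bar\Omega)\to L^\infty(\Omega)\hookrightarrow L^2(\Omega)$, with derivatives given by pointwise multiplication with $d_y(\cdot,y)$ and $d_{yy}(\cdot,y)$ respectively. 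Composing with the embedding $Y\hookrightarrow C(\bar\Omega)$ then yields $C^2$ regularity of $G$.

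Next I would verify that $G'(y)\colon Y\to L^2(\Omega)$, which acts as $z\mapsto Az + d_y(\cdot,y)z$, is a linear homeomorphism at every $y\in Y$. Because $d_y(\cdot,y)\ge 0$ a.e.\ and $a_0\geq 0$ with $a_0\not\equiv 0$ (alternatively using positivity of $d_y$ on $E_\Omega$), the bilinear form associated to $Az + d_y(\cdot,y)z$ with Neumann data is coercive on $H^1(\Omega)$, so the Lax--Milgram theorem produces a unique weak solution in $H^1(\Omega)$ for every $L^2$ datum. The a priori estimate and $C(\bar\Omega)$-regularity used in Theorem \ref{theo:stateeq} apply verbatim to this linear equation (the coefficient $d_y(\cdot,y)$ lies in $L^\infty(\Omega)$ since $y\in C(\bar\Omega)$), giving a solution in $Y$ together with a continuous inverse. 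With $G\in C^2$ and $G'(y)$ an isomorphism, the implicit function theorem applies and yields $S\in C^2(L^2(\Omega);Y)$.

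Finally, the explicit formulas come from differentiating $G(S(u))=u$. One differentiation gives $G'(S(u))\,S'(u)h = h$, which is precisely the linearized equation for $y_h = S'(u)h$ in the statement. A second differentiation produces
\[
G''(S(u))[S'(u)h_1,S'(u)h_2] + G'(S(u))\,S''(u)[h_1,h_2] = 0,
\]
and since $G''(y)[z_1,z_2] = d_{yy}(\cdot,y)z_1 z_2$, rearranging gives the advertised equation for $y_{h_1,h_2}=S''(u)[h_1,h_2]$. The main technical hurdle is the Nemytskii step: one must be careful that $Y$ embeds into $C(\bar\Omega)$ so that the bounded-set Lipschitz bounds on $d_{yy}$ translate into $L^\infty$-estimates, and that one works with $C(\bar\Omega)$ (not $L^\infty$) as the domain to preserve Fr\'echet — as opposed to merely Gateaux — differentiability of the superposition operator.
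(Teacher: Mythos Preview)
Your overall strategy—implicit function theorem combined with $C^2$-regularity of the Nemytskii operator, then differentiating the identity $G(S(u))=u$—is exactly the standard route and is what the paper invokes by citing \cite[Theorems 4.17, 4.24]{Troeltzsch2010optimal}. The derivation of the formulas for $S'$ and $S''$ at the end is correct.

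There is, however, a genuine gap in your setup of $G$. You declare $G\colon Y\to L^2(\Omega)$ with $Y=H^1(\Omega)\cap C(\bar\Omega)$ and claim that ``the linear part $y\mapsto Ay$ is bounded from $Y$ to $L^2(\Omega)$''. This is not true: for a generic $y\in H^1(\Omega)\cap C(\bar\Omega)$ the expression $Ay$ (even in the weak sense) is only an element of $(H^1(\Omega))^*$, not of $L^2(\Omega)$. Consequently $G$ as written is not well defined between the stated spaces, and the inverse/implicit function theorem cannot be applied to it directly. If instead you enlarge the codomain to $(H^1(\Omega))^*$, then $G'(y)$ fails to be an isomorphism onto $Y$, because right-hand sides in $(H^1(\Omega))^*$ do not in general produce solutions in $C(\bar\Omega)$.

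The standard repair (and what Tr\"oltzsch actually does) is to absorb $A$ into a \emph{linear} solution operator. Let $T\colon L^2(\Omega)\to Y$ denote the solution map of $Az+c_0 z=f$, $\partial_{\nu_A}z=0$, for some $c_0>0$; Theorem \ref{theo:stateeq} applied to this linear problem shows $T$ is bounded. Then $y=S(u)$ is equivalent to
\[
F(u,y):=y-T\bigl(u+c_0 y-d(\cdot,y)\bigr)=0,\qquad F\colon L^2(\Omega)\times Y\to Y.
\]
Now $F$ is $C^2$ because $T$ is linear bounded and the Nemytskii part is $C^2$ from $C(\bar\Omega)$ to $L^\infty(\Omega)$ (your argument for this step is fine). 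Moreover $F_y(u,y)z=z-T\bigl(c_0 z-d_y(\cdot,y)z\bigr)$, and showing this is an isomorphism on $Y$ reduces to solvability of the linearized PDE in $Y$, which follows from the same linear theory. With this reformulation your argument goes through unchanged and yields the stated equations for $y_h$ and $y_{h_1,h_2}$. Alternatively, under the domain assumptions here one could replace $Y$ by an $H^2$-type space with Neumann data, where $A$ genuinely maps into $L^2(\Omega)$; but the $T$-formulation is cleaner and does not require the extra elliptic regularity.
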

\begin{proof}
The proof for the first derivative of $S \colon L^r(\Omega)\rightarrow \Y, r>N/2$ can be found in \cite[Theorem 4.17]{Troeltzsch2010optimal}. We refer to \cite[Theorem 4.24]{Troeltzsch2010optimal} for the proof of second-order differentiability of $S\colon L^\infty(\Omega)\rightarrow \Y$ which is also valid for $S\colon L^2(\Omega)\rightarrow \Y$.
\end{proof}

\subsubsection{Existence of Solutions of the Optimal Control Problem}
Under the standing assumptions we can show existence of solutions of the reduced control problem \eqref{eq:optprob-gen}.
\begin{definition}[\textbf{Local solution}]
A control $\bu\in \Uad$ satisfying $S(\bu) \leq \psi$ in $\bar{\Omega}$ is called a local solution of problem \eqref{eq:ALS:optcontprob} in the sense of $L^2(\Omega)$ if there exists a $\zeta>0$ such that
\begin{align*}
f(\bu)\leq f(u) \quad \text{for all } u\in\Uad \text{ with } S(u)\leq \psi \text{ in }\bar{\Omega} \text{ and } \norm{\bu-u}_{L^2(\Omega)}\leq \zeta.
\end{align*}
\end{definition}
By standard arguments we get the following theorem.
\begin{theorem}[\textbf{Existence of solution of the optimal control problem}]
Let Assumption \ref{ass:standing} be satisfied. Assume that the feasible set $\Fad$ is nonempty. Then, there exists at least one global solution $(\by,\bu)$ of \eqref{eq:ALS:optcontprob}.
\end{theorem}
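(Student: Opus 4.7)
The plan is to apply the direct method of the calculus of variations, leveraging the weak-to-strong continuity of the control-to-state map $S$ provided by Theorem \ref{theo:stateeq}. Since by hypothesis $\Fad\neq \emptyset$, the infimum
\[
j^* := \inf\{f(u) : u\in \Uad,\ S(u)\le \psi\text{ in }\bar\Omega\}
\]
is finite and non-negative. I would pick a minimizing sequence $(u_n)_n\subset \Uad$ with $y_n := S(u_n)\in \Yad$ and $f(u_n)\to j^*$.

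Next I would establish boundedness and pass to a weak limit. Boundedness of $(u_n)_n$ in $L^2(\Omega)$ follows either from the coercive Tikhonov term $\tfrac{\alpha}{2}\|u\|_{L^2(\Omega)}^2$ in $J$, or from $\Uad\subset L^\infty(\Omega)$ being bounded by $u_a,u_b$; in either case, after passing to a subsequence (not relabelled), $u_n\rightharpoonup \bu$ in $L^2(\Omega)$. Since $\Uad$ is convex and closed in $L^2(\Omega)$, it is weakly closed, so $\bu\in\Uad$.

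Now I would transfer the convergence to the states. By the second part of Theorem \ref{theo:stateeq}, weak convergence $u_n\rightharpoonup \bu$ in $L^2(\Omega)$ implies strong convergence $y_n\to \by$ in $H^1(\Omega)\cap C(\bar\Omega)$, where $\by=S(\bu)$. Because $y_n\le \psi$ pointwise in $\bar\Omega$ for every $n$ and the convergence is uniform, we get $\by\le \psi$ in $\bar\Omega$, hence $(\by,\bu)\in \Fad$.

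Finally I would use lower semicontinuity of the objective. Strong $L^2$-convergence of $y_n\to\by$ gives $\tfrac12\|y_n-y_d\|_{L^2(\Omega)}^2\to \tfrac12\|\by-y_d\|_{L^2(\Omega)}^2$, and weak lower semicontinuity of the $L^2$-norm yields $\tfrac{\alpha}{2}\|\bu\|_{L^2(\Omega)}^2\le \liminf_n \tfrac{\alpha}{2}\|u_n\|_{L^2(\Omega)}^2$. Combined, $f(\bu)\le \liminf_n f(u_n)=j^*$, so $(\by,\bu)$ is a global minimizer. The only mildly delicate point is ensuring $\by\le \psi$ in the limit, which is exactly why the uniform (rather than merely weak) convergence of states from Theorem \ref{theo:stateeq} is essential; everything else is routine.
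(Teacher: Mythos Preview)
Your argument is correct and is precisely the standard direct-method proof: minimizing sequence, weak compactness of $\Uad$, weak-to-strong continuity of $S$ from Theorem~\ref{theo:stateeq} to recover feasibility of the limit state, and weak lower semicontinuity of $J$. The paper does not spell this out but simply cites \cite[Theorem~1.45]{HinzePinnauUlbrich2009optimization}, whose proof is exactly the argument you give.
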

\begin{proof}
The proof can be found in \cite[Theorem 1.45]{HinzePinnauUlbrich2009optimization}.
\end{proof}
Due to non-convexity, global solutions of problem \eqref{eq:ALS:optcontprob}
are not unique in general, also, in addition there
might be local solutions.
\subsubsection{First-Order Optimality Conditions}
The existence of Lagrange multipliers to state constrained optimal control problems is not guaranteed without some regularity assumption.
In order to formulate first-order necessary optimality conditions we will work with the following linearized Slater condition.
\begin{assumption}[\textbf{Linearized Slater condition}]
\label{ass:slater}
We assume that a local solution $\bu$ satisfies the linearized Slater condition, i.e., there exists $\hat{u}\in \Uad$
and $\sigma>0$ such that there holds
\[
S(\bu)(x)+S'(\bu)(\hat{u}-\bu)(x) \leq \psi(x)-\sigma \quad \forall x\in \bar{\Omega}.
\]
\end{assumption}
Next, we
state a regularity result concerning linear partial differential equations with measure on the right-hand side, see \cite[Theorem 4.3]{Casas1993boundContSemilinState}.
\begin{theorem}[\textbf{Existence of solution of the adjoint equation}]\label{theo:exsoladjointeq}
Let $\mu$ be a regular Borel measure with $\mu=\mu_\Omega+\mu_\Gamma\in \MO$. Then the elliptic partial differential equation
\begin{equation*}
\begin{alignedat}{2}
A^*p + d_y(y)p&=y-y_d +\mu_\Omega &\quad& \text{in }\Omega,\\
\partial_{\nu_{A^*}} p&= \mu_\Gamma && \text{on }\Gamma
\end{alignedat}
\end{equation*}
admits a unique weak solution $p\in W^{1,s}(\Omega),s\in[1,N/(N-1))$ and it holds
\begin{align*}
\norm{p}_{W^{1,s}(\Omega)}\leq c\left(\norm{y}_{L^2(\Omega)}+\norm{y_d}_{L^2(\Omega)}+\norm{\mu}_\MO\right)
\end{align*}
with $c>0$ independent of the right hand side of the partial differential equation.
\end{theorem}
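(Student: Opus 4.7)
The plan is to establish existence, uniqueness and the $W^{1,s}$-estimate via the classical transposition (Stampacchia) method. First I would introduce the auxiliary direct problem: for arbitrary $f\in L^{s'}(\Omega)$ with $s' = s/(s-1) > N$ (the strict inequality being the dual reformulation of $s < N/(N-1)$), consider
\[
Az + d_y(y)z = f \ \text{ in }\Omega, \qquad \partial_{\nu_A} z = 0 \ \text{ on }\Gamma.
\]
Because $y\in\Y$ gives $d_y(\cdot,y)\in L^\infty(\Omega)$ by Assumption~\ref{ass:standing}, and because the coefficients of $A$ and the domain $\Omega$ are regular enough (Lipschitz coefficients with $C^{1,1}$- or convex polygonal boundary), classical $L^{s'}$-regularity for elliptic Neumann problems produces a unique $z\in W^{2,s'}(\Omega)\hookrightarrow \CO$ satisfying
\[
\norm{z}_{W^{2,s'}(\Omega)} + \norm{z}_{\CO} \leq C\,\norm{f}_{L^{s'}(\Omega)}.
\]

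Next I would define the linear form
\[
\ell\colon L^{s'}(\Omega)\to\R, \qquad \ell(f) := \int_\Omega (y-y_d)\,z \dx \;+\; \langle z,\mu\rangle_{\CO,\MO},
\]
and use the estimate above together with Hölder's inequality and the duality pairing between $\CO$ and $\MO$ to obtain
\[
|\ell(f)| \leq C\bigl(\norm{y}_{L^2(\Omega)} + \norm{y_d}_{L^2(\Omega)} + \norm{\mu}_\MO\bigr)\,\norm{f}_{L^{s'}(\Omega)}.
\]
By the Riesz representation theorem there exists a unique $p\in L^s(\Omega)$ with $\ell(f) = (p,f)_{L^2}$ for every $f\in L^{s'}(\Omega)$, and this identity is exactly the definition of $p$ as a very-weak (transposition) solution of the adjoint problem; in particular it yields the $L^s$-bound with the right-hand-side data. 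Uniqueness is immediate from the same transposition identity: a solution of the homogeneous problem gives $(p,f)_{L^2}=0$ for all $f\in L^{s'}$, hence $p\equiv 0$.

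The main obstacle is upgrading this $L^s$-representation to the claimed $W^{1,s}$-estimate. The plan is to test with $z$ arising from right-hand sides in $W^{-1,s'}(\Omega)$ (rather than only $L^{s'}(\Omega)$), exploiting the fact that $W^{1,s'}(\Omega)\hookrightarrow \CO$ for $s'>N$, and to appeal to the refined Stampacchia/Meyers estimates for elliptic operators with measure data as assembled in \cite[Theorem 4.3]{Casas1993boundContSemilinState}; the Lipschitz regularity of the coefficients and the geometric assumptions on $\Omega$ from Assumption~\ref{ass:standing} are exactly what is required to invoke that theory, and the threshold $s < N/(N-1)$ is dictated by the Sobolev embedding used on the test function side.
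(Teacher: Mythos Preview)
Your sketch is correct and follows the classical Stampacchia transposition method, which is precisely the approach of \cite[Theorem~4.3]{Casas1993boundContSemilinState}. The paper itself does not give a proof of this theorem: it simply states the result and refers the reader to that same reference for the proof. So there is nothing to compare --- you have reconstructed (in outline) the argument that the paper merely cites.

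One small caution: the intermediate claim $z\in W^{2,s'}(\Omega)$ for all $s'>N$ is safe on $C^{1,1}$-domains but can fail on merely convex polygonal domains; the argument in \cite{Casas1993boundContSemilinState} avoids this by working with $W^{1,s'}$-regularity and the embedding $W^{1,s'}(\Omega)\hookrightarrow\CO$ for $s'>N$, exactly as you indicate in your final paragraph. With that adjustment your plan matches the cited proof.
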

Based on the linearized Slater condition first-order necessary optimality conditions for problem \eqref{eq:ALS:optcontprob} can be established.
\begin{theorem}[\textbf{First-order necessary optimality conditions}]\label{theo:KKT-AL}
Let $\bu$ be a local solution of problem \eqref{eq:ALS:optcontprob} that satisfies Assumption \ref{ass:slater}. Let $\by=S(\bu)$ denote the corresponding state. Then, there exists an adjoint state $\bp\in W^{1,s}(\Omega)$, $s\in(1,N/(N-1))$ and a Lagrange multiplier $\bar{\mu}\in \MO$ with $\bmu=\bmu_{\Omega}+\bmu_{\Gamma}$
such that the following optimality system
\begin{subequations}\label{kkt_optsys}
\begin{equation}
\begin{alignedat}{2}
A\by +d(\by)&=\bu  &\quad &\text{in } \Omega,\\
\partial_{\nu_A}\by&=0 &&\text{on } \Gamma,
\end{alignedat}
\label{eq:kkt_o:1}
\end{equation}
\begin{equation}
\begin{alignedat}{2}
A^*\bp +d_y(\by)\bp &= \by-y_d +\bar{\mu}_\Omega &\quad &\text{in } \Omega,\\
\partial_{\nu_{A^*}}\bp&=\bar{\mu}_\Gamma &&\text{on } \Gamma,
\end{alignedat}
\label{eq:kkt_o:2}
\end{equation}
\begin{equation}
( \bp+\alpha \bu,u-\bu) \geq 0\quad\forall u\in \Uad, \label{eq:kkt_o:3}
\end{equation}
\begin{equation}
\langle\bar{\mu},\by-\psi\rangle_{\MO,\CO}=0,\quad  \bar{\mu}\geq 0,\quad \by(x)\leq \psi(x),\ \forall x\in\bar{\Omega}\label{eq:kkt_o:4}
\end{equation}
\label{eq:kkt_o}
\end{subequations}
is fulfilled. Here, the inequality $\bar{\mu}\geq 0$ means $\langle \bar{\mu},\varphi\rangle_{\MO,\CO}\geq 0$ for all $\varphi\in \CO$ with $\varphi\ge0$.
\end{theorem}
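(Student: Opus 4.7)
The plan is to apply a generalized Lagrange multiplier theorem (Zowe–Kurcyusz or Robinson style) in the reduced setting. First, I would recast the problem as minimizing $f(u) = J(S(u),u)$ over $u \in \Uad$ subject to the cone constraint $G(u) := S(u) - \psi \in K$, where $K := \{z \in \CO : z \le 0\}$ is a closed convex cone with nonempty interior. By Theorem \ref{theo:stateeq_diff}, $G \colon \U \to \CO$ is continuously Fréchet differentiable, and the reduced cost $f$ is continuously Fréchet differentiable with derivative $f'(u)h = (S(u) - y_d, S'(u)h) + \alpha(u,h)$.

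Next, I would verify Robinson's regularity condition at $\bu$. The linearized Slater point $\hat u \in \Uad$ of Assumption \ref{ass:slater} yields $S(\bu) + S'(\bu)(\hat u - \bu) \le \psi - \sigma$ in $\bar\Omega$, i.e.\ $G(\bu) + G'(\bu)(\hat u - \bu) + \sigma \mathbf{1} \in -K$. Since $\sigma > 0$ and $K$ has nonempty interior, this gives $0 \in \operatorname{int}\left(G(\bu) + G'(\bu)(\Uad - \bu) - K\right)$, the standard constraint qualification. Applying the abstract KKT theorem for convex cone constraints (e.g.\ \cite{Troeltzsch2010optimal}, Thm.~6.3 or Casas \cite{Casas1986contEllProbStateConst}) delivers a multiplier $\bmu \in \CO^* = \MO$ satisfying $\bmu \ge 0$, the complementarity relation $\langle \bmu, \by - \psi\rangle_{\MO,\CO} = 0$, the primal admissibility $\by \le \psi$ in $\bar\Omega$, and the stationarity condition
\[
(\by - y_d, S'(\bu)(u - \bu)) + \alpha(\bu, u - \bu) + \langle \bmu, S'(\bu)(u - \bu)\rangle_{\MO,\CO} \ge 0 \quad \forall u \in \Uad.
\]

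Once the multiplier is in hand, I would split $\bmu = \bmu_\Omega + \bmu_\Gamma$ according to its support in $\Omega$ and on $\Gamma$ and define $\bp$ as the unique $W^{1,s}(\Omega)$-solution of the adjoint equation \eqref{eq:kkt_o:2} given by Theorem \ref{theo:exsoladjointeq}. The remaining step is to rewrite the stationarity inequality as the variational inequality \eqref{eq:kkt_o:3}. Formally, testing the linearized state equation for $y_h = S'(\bu)(u - \bu)$ with $\bp$ and the adjoint equation with $y_h$, integration by parts gives $(\by - y_d, y_h) + \langle \bmu, y_h\rangle_{\MO,\CO} = (u - \bu, \bp)$, which substituted into the stationarity condition yields \eqref{eq:kkt_o:3}.

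The main obstacle is the duality pairing step: since $\bp \in W^{1,s}(\Omega)$ with $s < N/(N-1) \le 2$, the integration-by-parts identity cannot be justified in the standard $H^1$ sense but must be established by a transposition/duality argument, using that $y_h \in H^1(\Omega) \cap \CO$ by Theorem \ref{theo:stateeq_diff} and that $\bmu_\Omega \in \MO$ pairs with $y_h$ through the $\MO$–$\CO$ duality. This transposition argument, as carried out in \cite{Casas1993boundContSemilinState}, is what provides a rigorous identification of the adjoint test in the low-regularity regime and closes the proof.
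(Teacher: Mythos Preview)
Your proposal is correct and follows essentially the same approach as the paper, which simply refers to \cite[Theorem 5.3]{Casas1993boundContSemilinState} and remarks that the argument there has to be adapted to Neumann boundary conditions. Your outline --- reduced formulation with cone constraint in $\CO$, verification of Robinson's regularity via the linearized Slater point, application of the abstract multiplier rule to obtain $\bmu\in\MO$, definition of $\bp$ through Theorem \ref{theo:exsoladjointeq}, and the transposition argument to pass from the reduced stationarity condition to \eqref{eq:kkt_o:3} --- is precisely the route taken in Casas's proof, and you correctly identify the low-regularity duality pairing as the only technically delicate step.
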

\begin{proof}
The proof can be done by adapting the theory from \cite[Theorem 5.3]{Casas1993boundContSemilinState} to Neumann boundary conditions.
\end{proof}
Let us emphasize that due to the presence of control as well as state constraints, the adjoint state $\bp$ and the Lagrange multiplier $\bmu$ need not to be unique. 

\section{The Augmented Lagrange Method}\label{sec:AL}
Like in \cite{KarlWachsmuth2017augmented} we eliminate the explicit state constraint $S(u)\leq\psi$ from the set of constraints by adding an augmented Lagrange term to the cost functional. Let $\rho>0$ denote a penalization parameter and $\mu$ a fixed function in $L^2(\Omega)$. Then in every step $k$ of the augmented Lagrange method one has to solve the sub-problem
\begin{align}
\underset{u_{\rho}}{\min}\ &f_{AL}(u_{\rho},\mu,\rho):=f(u_\rho)+\frac{1}{2\rho}\int_{\Omega}\left(\left(\mu+\rho(S(u_{\rho})-\psi)\right)_+\right)^2 \dx
\label{prob_auglag}\tag{${P_{AL}^{\rho,\mu}}$}
\end{align}
where $(\cdot)_+:=\max(0,\cdot)$ in the pointwise sense, subject to the  control constraints
$$ u_{\rho}\in \Uad.$$
\subsection{Analysis of the Augmented Lagrange Sub-Problem}
In the following, existence of an optimal control and existence of a corresponding adjoint state will be proven. Local solutions of the augmented Lagrange sub-problem \eqref{prob_auglag} are defined analogously to \eqref{eq:ALS:optcontprob}.

\begin{definition}[\bf{Local solution}]
A control $\bu_\rho\in \Uad$ is a local solution of the augmented Lagrange sub-problem \eqref{prob_auglag} if there exists a $\zeta>0$ such that 
\begin{align*}
 f_{AL}(\bu_\rho) \leq f_{AL} (u),\quad \text{ for all } u\in \Uad \text{ with } \norm{u-\bu_\rho}_{L^2(\Omega)}\leq \zeta.
\end{align*}
\end{definition}

\begin{theorem}[\textbf{Existence of solutions of the augmented Lagrange sub-problem}]
For every $\rho>0$, $\mu\in L^2(\Omega)$ with $\mu\geq 0$ the augmented Lagrange sub-problem \eqref{prob_auglag} admits at least one global solution $\bu_{\rho}\in \Uad$.
\end{theorem}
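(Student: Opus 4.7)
The plan is to apply the direct method of the calculus of variations to the reduced augmented Lagrange objective $f_{AL}(\cdot,\mu,\rho)$ over $\Uad$. First I would observe that $f_{AL}$ is bounded below on $\Uad$: both the tracking term and the control cost in $f$ are nonnegative, and the penalty term $\tfrac{1}{2\rho}\|(\mu+\rho(S(u)-\psi))_+\|_{L^2(\Omega)}^2$ is nonnegative by construction. Hence $\inf_{u\in \Uad} f_{AL}(u,\mu,\rho)\ge 0$, and a minimizing sequence $(u_n)_n\subset \Uad$ exists.

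Next I would extract a weakly convergent subsequence. Since $\Uad$ is bounded in $L^\infty(\Omega)$, it is bounded in $L^2(\Omega)$, so after passing to a subsequence $u_n \rightharpoonup \bu_\rho$ in $L^2(\Omega)$. The set $\Uad$ is convex and closed in $L^2(\Omega)$ and therefore weakly closed, giving $\bu_\rho\in \Uad$. By Theorem~\ref{theo:stateeq}, the solution operator $S$ maps weakly convergent sequences in $L^2(\Omega)$ to strongly convergent sequences in $Y=H^1(\Omega)\cap C(\bar\Omega)$, hence $y_n:=S(u_n)\to \by_\rho:=S(\bu_\rho)$ strongly in $Y$.

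It remains to pass to the limit in each term of $f_{AL}$. Strong convergence $y_n\to \by_\rho$ in $L^2(\Omega)$ yields continuity of the tracking term $\tfrac12\|y-y_d\|_{L^2(\Omega)}^2$. Weak lower semicontinuity of the $L^2$-norm gives
\[
\tfrac{\alpha}{2}\|\bu_\rho\|_{L^2(\Omega)}^2 \;\le\; \liminf_{n\to\infty} \tfrac{\alpha}{2}\|u_n\|_{L^2(\Omega)}^2.
\]
For the penalty term, I would use that the map $y\mapsto (\mu+\rho(y-\psi))_+$ is Lipschitz continuous from $L^2(\Omega)$ to $L^2(\Omega)$ (pointwise Lipschitz with constant $\rho$ after subtracting $\mu$, and $\mu\in L^2(\Omega)$, $\psi\in C(\bar\Omega)\subset L^2(\Omega)$). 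Combined with the strong convergence $y_n\to \by_\rho$ in $L^2(\Omega)$, this gives
\[
\tfrac{1}{2\rho}\|(\mu+\rho(\by_\rho-\psi))_+\|_{L^2(\Omega)}^2 \;=\; \lim_{n\to\infty} \tfrac{1}{2\rho}\|(\mu+\rho(y_n-\psi))_+\|_{L^2(\Omega)}^2.
\]
Adding these gives $f_{AL}(\bu_\rho,\mu,\rho)\le \liminf_{n\to\infty} f_{AL}(u_n,\mu,\rho) = \inf_{\Uad} f_{AL}(\cdot,\mu,\rho)$, so $\bu_\rho$ is a global minimizer.

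I do not expect a real obstacle here; the argument is standard because the explicit state constraint has been moved into the objective. The only subtle point is that weak lower semicontinuity of the composite penalty term is guaranteed by the weak-to-strong continuity of $S$ (Theorem~\ref{theo:stateeq}) rather than by convexity of $f_{AL}$ in $u$, which fails due to the semilinear state equation. The sign assumption $\mu\ge 0$ is not needed for existence but is consistent with how the multiplier enters the iteration.
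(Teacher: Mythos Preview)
Your argument is correct and is precisely the standard direct-method proof the paper has in mind; the paper itself does not write out any details but simply refers to standard arguments in \cite{Troeltzsch2010optimal}. Your identification of the key ingredient---weak-to-strong continuity of $S$ from Theorem~\ref{theo:stateeq} to handle the nonconvex penalty and tracking terms---is exactly what makes the standard argument go through here.
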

\begin{proof}
The proof follows standard arguments, see \cite{Troeltzsch2010optimal}.
\end{proof}
Since the problem \eqref{prob_auglag} has no state constraints, the first-order optimality system is fulfilled without any further regularity assumptions.

\begin{theorem}[\textbf{First-order necessary optimality conditions}]
For given $\rho>0$ and $0\leq\mu\in L^2(\Omega)$ let $(\by_{\rho},\bu_{\rho})$ be a solution of \eqref{prob_auglag}. Then, there exists a unique adjoint state $\bp_{\rho}\in H^1(\Omega)$ satisfying the following system
\begin{subequations}\label{AL:optsys}
\begin{equation}\label{AL1_pde}
\begin{alignedat}{2}
A\by_{\rho}+d(\by_{\rho})&=\bu_{\rho} &\quad &\text{in } \Omega,\\
\partial_{\nu_A} \by_{\rho}&=0 &&\text{on } \Gamma,
\end{alignedat}
\end{equation}
\begin{equation} \label{AL2_adjoint}
\begin{alignedat}{2}
A^*\bar{p}_{\rho} + d_y(\by_{\rho})\bp_{\rho}&= \bar{y}_{\rho}-y_d +\bmu_{\rho} &\quad &\text{in } \Omega,\\
\partial_{\nu_{A^*}}\by_{\rho}&=0 &&\text{on } \Gamma,
\end{alignedat}
\end{equation}
\begin{equation} \label{AL3_varineq}
(\bar{p}_{\rho} +\alpha \bu_{\rho},u-\bu_{\rho})\geq 0,\qquad \forall u\in \Uad
\end{equation}
\begin{equation} \label{AL4_mult}
\bmu_{\rho}=\left(\mu+\rho(\by_{\rho}-\psi)\right)_+.
\end{equation}
\end{subequations}
\end{theorem}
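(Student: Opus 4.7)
The absence of state constraints in \eqref{prob_auglag} makes this a control-constrained smooth optimization problem, so the strategy is the standard one: differentiate the reduced augmented objective, introduce an adjoint state to eliminate the linearized state, and read off the variational inequality. The only non-routine ingredient is the differentiability of the penalty term, but since the scalar map $t\mapsto (t_+)^2$ is continuously differentiable (with derivative $2t_+$) even though $t\mapsto t_+$ is not, the penalty term defines a $C^1$ functional on $L^2(\Omega)$, which is all that is needed for first-order conditions.

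First I would establish Fr\'echet differentiability of
\[
 G(u) := \frac{1}{2\rho}\int_\Omega \bigl((\mu+\rho(S(u)-\psi))_+\bigr)^2 \dx
\]
as a map from $L^2(\Omega)$ into $\R$, by combining the chain rule with Theorem~\ref{theo:stateeq_diff} and the smoothness of $t\mapsto (t_+)^2$. Direct computation then gives
\[
 G'(u)h \;=\; \bigl(\,(\mu+\rho(S(u)-\psi))_+\,,\, S'(u)h\,\bigr)_{L^2(\Omega)}.
\]
Defining $\bmu_\rho := (\mu+\rho(\by_\rho-\psi))_+$ and using standard differentiability of the tracking term, the derivative of $f_{AL}(\cdot,\mu,\rho)$ at the local minimizer $\bu_\rho$ applied to $h\in L^2(\Omega)$ becomes
\[
 f_{AL}'(\bu_\rho)h \;=\; \bigl(\by_\rho-y_d+\bmu_\rho,\, y_h\bigr)_{L^2(\Omega)} + \alpha(\bu_\rho,h),
\]
where $y_h = S'(\bu_\rho)h$ satisfies the linearized equation from Theorem~\ref{theo:stateeq_diff}.

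Next I would introduce $\bp_\rho$ as the solution of the adjoint equation \eqref{AL2_adjoint}. Since $\mu\in L^2(\Omega)$ and $\by_\rho\in C(\bar\Omega)$ imply $\bmu_\rho \in L^2(\Omega)$, the right-hand side of the adjoint equation lies in $L^2(\Omega)$; applying Theorem~\ref{theo:stateeq} to the formally self-similar linear equation (with coefficient $d_y(\by_\rho)\geq 0$) then yields a unique weak solution $\bp_\rho \in H^1(\Omega)\cap C(\bar\Omega)$, which in particular settles both the existence and uniqueness statements. Uniqueness of $\bp_\rho$ alone also follows directly from the linearity of \eqref{AL2_adjoint} combined with the coercivity provided by $a_0\geq 0$, $a_0\neq 0$ and $d_y\geq 0$ in Assumption~\ref{ass:standing}.

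Finally I would use integration by parts in the weak formulations (testing the adjoint equation with $y_h$ and the linearized state equation with $\bp_\rho$) to obtain the identity
\[
 \bigl(\by_\rho-y_d+\bmu_\rho,\, y_h\bigr)_{L^2(\Omega)} \;=\; (\bp_\rho,h)_{L^2(\Omega)},
\]
so that $f_{AL}'(\bu_\rho)h = (\bp_\rho + \alpha\bu_\rho,h)$. The standard variational inequality $f_{AL}'(\bu_\rho)(u-\bu_\rho)\geq 0$ for all $u\in \Uad$, which is a necessary condition for the local minimizer on the convex admissible set $\Uad$, now yields \eqref{AL3_varineq}. Together with \eqref{AL1_pde}, \eqref{AL2_adjoint} and the definition \eqref{AL4_mult} of $\bmu_\rho$, this completes the system. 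The only mildly delicate step is verifying Fr\'echet (as opposed to merely G\^ateaux) differentiability of $G$ through the non-smooth max operator, but this is taken care of once one exploits that the composition $(\cdot)_+^2$ is $C^1$ with globally Lipschitz derivative on bounded sets, combined with the Lipschitz continuity of $S$ from~\eqref{eq:S-Lipschitz}.
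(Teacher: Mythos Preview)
Your proposal is correct and follows the standard route for deriving first-order conditions of a control-constrained smooth problem: differentiate the reduced functional (using that $t\mapsto (t_+)^2$ is $C^1$), introduce the adjoint to represent the derivative, and invoke the variational inequality characterization of minimizers on the convex set $\Uad$. The paper takes exactly this approach but outsources the entire derivation to \cite[Corollary~1.3, p.~73]{HinzePinnauUlbrich2009optimization}, and then argues uniqueness of $\bp_\rho$ separately via Theorem~\ref{theo:exsoladjointeq} (applied with $L^2$ right-hand side); your self-contained argument simply unpacks what that reference does, and your appeal to the linear version of Theorem~\ref{theo:stateeq} for the adjoint equation is equivalent to the paper's use of Theorem~\ref{theo:exsoladjointeq} once one notes $\bmu_\rho\in L^2(\Omega)$.
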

\begin{proof}
For the existence of an adjoint state $\bp_\rho\in H^1(\Omega)$ that satisfies the KKT system we refer to \cite[Corollary 1.3, p.73]{HinzePinnauUlbrich2009optimization}. By construction  we get a unique $\bmu_\rho$ for each $(\by_\rho,\bu_\rho)$. Due to Theorem \ref{theo:exsoladjointeq} the adjoint equation admits a unique solution. Thus, the adjoint state $\bp_\rho$ is unique.
\end{proof}
Finally, in Algorithm \ref{alg_detail} we present the augmented Lagrange algorithm, which is based on the algorithm that has been developed in \cite{KarlWachsmuth2017augmented}.

\begin{algorithm}[H]\caption{Augmented Lagrange Algorithm}\label{alg_detail}
Let $\rho_1>0$ and $\mu_1\in L^2(\Omega)$ be given with $\mu_1\ge0$. Choose $\theta>1$, $\tau\in(0,1)$, $\epsilon\ge0$, $R_0^+ \gg 1$. Set $k:=1$ and $n:=1$.
\begin{enumerate}
 \item Solve the optimality system \eqref{AL:optsys} for $\mu:=\mu_k$, and obtain $(\by_{k},\bu_{k},\bp_{k})$.
 \item Set $\bmu_k:=(\mu_k+\rho_k(\by_k-\psi))_+$.
 \item  Compute $R_k:= \norm{(\by_{k}-\psi)_+}_\CO + ( \bmu_{k},\psi-\by_{k})_+$.
 \item If $R_k\le \tau R^+_{n-1}$ then the step $k$ is successful, set
 $$\mu_{k+1}:=\bmu_k=\left(\mu_k+\rho_k(\by_k-\psi)\right)_+,$$
 $\rho_{k+1}:=\rho_k$, and define $(y_n^+,u_n^+,p_n^+):=(\by_{k},\bu_{k},\bp_{k})$, as well as $\mu_n^+:=\mu_{k+1}$ and $R_n^+:=R_k$.
 Set $n:=n+1$.
 \item Otherwise the step $k$ is not successful, set $\mu_{k+1}:=\mu_k$, increase penalty parameter $\rho_{k+1}:=\theta \rho_k$.
 \item If $R_{n-1}^+ \le \epsilon$ then stop, otherwise set $k:=k+1$ and go to step 1.
\end{enumerate}
\end{algorithm}
\noindent
In the following we will call the step $k$ \emph{successful} if the quantity
\[
R_k:= \norm{(\by_{k}-\psi)_+}_\CO + ( \bmu_{k},\psi-\by_{k})_+
\]
shows sufficient decrease (see step 4 of the algorithm). Otherwise we will call the step \emph{not successful}. 
The first part of $R_k$ measures the maximal constraint violation while the second term quantifies the fulfilment of the complementarity condition in the second part. Since $(\bmu_{k}(x),\psi(x)-\by_{k}(x))$ is nonnegative for every feasible $\by_k$ it is enough to check on the smallness of $( \bmu_{k},\psi-\by_{k})_+$ in the second term for quantifying if the complementarity condition is satisfied.\medskip\\
From now on let $(P_{AL}^k)$ denote the augmented Lagrange sub-problem \eqref{prob_auglag}
for given penalty parameter $\rho:=\rho_k$ and multiplier $\mu:=\mu_k$. We will denote its solution by $(\by_k,\bu_k)$ with adjoint state $\bp_k$ and updated multiplier $\bmu_k$.

\section{Convergence Analysis}\label{sec:convKKT}

\subsection{Infinitely Many Successful Steps and Convergence Towards Feasible Points} 
The most crucial part of the convergence analysis is to prove that the algorithm makes infinitely many successful steps. Otherwise the algorithm might be caught in an infinite loop between the steps $1,2,3$ and $5$. \medskip\newline
The following assumption plays the key role for proving that Algorithm \ref{alg_detail} is well-defined. 

\begin{assumption}\label{ass:termMuBounded}
In step 1 of Algorithm \ref{alg_detail}, the solutions $(\by_k,\bu_k,\bp_k)$ of \eqref{AL:optsys}
are chosen such that
\[
\frac{1}{\rho_k}\norm{\bmu_k}^2_{L^2(\Omega)}=\frac{1}{\rho_k}\norm{(\mu_k+\rho_k(\by_k-\psi))_+}^2_{L^2(\Omega)}
\]
is uniformly bounded.
\end{assumption}

Working with Assumption \ref{ass:termMuBounded} we consider different approaches.
In the first approach we consider the sequence $(\bu_k)_k$ generated by Algorithm \ref{alg_detail} which is, due to the control constraints, bounded in $L^2(\Omega)$. Hence, we can extract a weakly converging subsequence $\bu_{k'}\rightharpoonup u^*$ in $L^2(\Omega)$. Note, that here $u^*$ denotes only a weak limit point of $(\bu_{k'})_{k'}$ and \emph{not} necessarily a local solution of the optimal control problem \eqref{eq:ALS:optcontprob}. Further, by Theorem \ref{theo:stateeq} we get a strongly converging subsequence $\by_{k'}\rightarrow y^*$ in $H^1(\Omega)\cap \CO$. 
Exploiting Assumption \ref{ass:termMuBounded} our aim is to show that $y^*:=S(u^*)$ is feasible which in turn will yield that the term $R_k$ tends to zero (Theorem \ref{theo:infsucstep}).
In the second approach we choose $(\by_k,\bu_k)$ to be global minimizers of the augmented Lagrange sub-problem and show via a contradiction argument that infinitely many successful steps are done which in turn yields feasibility of any accumulation point of $(S(\bu_k))_k$. We start with the first approach proving an auxiliary result that does not require Assumption \ref{ass:termMuBounded}.

\begin{lemma}\label{lem:limsup_compl_mufixed}
Assume that only finitely many steps of Algorithm \ref{alg_detail} are successful.
Then it holds
$$\underset{k\rightarrow\infty}{\lim\sup}\ (\bmu_k,\psi-\by_k)_+\leq 0.$$
\end{lemma}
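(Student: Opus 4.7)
The plan starts from two direct consequences of the hypothesis that only finitely many steps are successful: after some index $k_0$ the multiplier is frozen, $\mu_k \equiv \mu_{k_0}$ for all $k \geq k_0$, while the penalty parameter grows geometrically, $\rho_k = \theta^{\,k-k_0}\rho_{k_0} \to \infty$. These two facts (constancy of $\mu_k$ and divergence of $\rho_k$) are the only ingredients I need from the update rule.

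Next I would unfold the definition $\bmu_k = (\mu_k + \rho_k(\by_k - \psi))_+$ on the active set $A_k := \{x \in \Omega : \mu_k(x) + \rho_k(\by_k(x) - \psi(x)) > 0\}$, where $\bmu_k$ is strictly positive, and note that $\bmu_k$ vanishes on the complement. This yields
\[
(\bmu_k, \psi - \by_k) = \int_{A_k}\mu_k(\psi - \by_k)\dx \;-\; \rho_k\int_{A_k}(\psi - \by_k)^2\dx.
\]
The second term is non-positive and can be dropped in any upper bound; the first term is the one I actually have to control.

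The key step is then to split $A_k$ into $A_k^- := A_k \cap \{\by_k \geq \psi\}$ and $A_k^+ := A_k \cap \{\by_k < \psi\}$. On $A_k^-$ the integrand $\mu_k(\psi - \by_k)$ is non-positive because $\mu_k \ge 0$. On $A_k^+$ the defining inequality of $A_k$ rearranges into the pointwise bound $0 < \psi - \by_k < \mu_k/\rho_k$, whence
\[
\int_{A_k^+}\mu_k(\psi - \by_k)\dx \;\leq\; \frac{1}{\rho_k}\int_{A_k^+}\mu_k^2\dx \;\leq\; \frac{\norm{\mu_k}_{L^2(\Omega)}^2}{\rho_k}.
\]
Combining the pieces gives $(\bmu_k, \psi - \by_k) \leq \rho_k^{-1}\norm{\mu_k}_{L^2(\Omega)}^2$, and since $\mu_k$ is eventually constant while $\rho_k\to\infty$, the right-hand side tends to zero. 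Taking the positive part then delivers $\limsup_k(\bmu_k,\psi - \by_k)_+ \leq 0$.

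The main obstacle, which is rather mild, is spotting that positivity of $\mu_k + \rho_k(\by_k - \psi)$ on $A_k^+$ is equivalent to the crucial pointwise bound $\psi - \by_k < \mu_k/\rho_k$; this is what converts the seemingly indefinite integrand into something of size $\mu_k^2/\rho_k$ and lets the divergence of $\rho_k$ absorb the fixed $\mu_k$. Importantly, the argument does not require Assumption \ref{ass:termMuBounded}, which matches the wording of the lemma. Everything else is sign tracking on the decomposition $\Omega = A_k^- \cup A_k^+ \cup (\Omega\setminus A_k)$.
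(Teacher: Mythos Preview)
Your proof is correct and follows essentially the same approach as the paper: freeze $\mu_k=\mu_m$ for $k>m$, unfold $\bmu_k=(\mu_m+\rho_k(\by_k-\psi))_+$ on its active set, and show that the inner product is bounded by a constant times $\rho_k^{-1}\norm{\mu_m}_{L^2(\Omega)}^2$, which vanishes since $\rho_k\to\infty$. The only cosmetic difference is in the final estimate: where you split $A_k$ into $A_k^\pm$ and use the pointwise bound $0<\psi-\by_k<\mu_k/\rho_k$ on $A_k^+$, the paper instead writes $\psi-\by_k=-\bmu_k/\rho_k+\mu_m/\rho_k$ on the active set and applies Young's inequality to the cross term $(\bmu_k,\mu_m)$, arriving at the slightly sharper bound $\frac{1}{2\rho_k}\norm{\mu_m}_{L^2(\Omega)}^2$.
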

\begin{proof}
By assumption, there is an index $m$ such that all steps $k$ with $k>m$ are not successful.
According to Algorithm \ref{alg_detail} it holds $\mu_k=\mu_m$ for all $k>m$.
Let 
$$\Omega\supseteq\Omega_k:=\left\lbrace x\in \Omega \colon (\bmu_k,\psi-\by_k)(x)\geq 0 \right\rbrace.$$
Then, the desired estimate follows easily by pointwise evaluation
of the contributing quantities in
\begin{align*}
(\bmu_k,\psi-\by_k)_+ &= (\bmu_k,-\frac{\mu_m}{\rho_k}+\psi-\by_k+\frac{\mu_m}{\rho_k})_+\leq -\frac{1}{\rho_k}\norm{\bmu_k}^2_{L^2(\Omega_k)}+\frac{1}{\rho_k}(\bmu_k,\mu_m)_{L^2(\Omega_k)}\\
&\leq -\frac{1}{2\rho_k}\norm{\bmu_k}^2_{L^2(\Omega_k)} +\frac{1}{2\rho_k}\norm{\mu_m}^2_{L^2(\Omega_k)}\leq \frac{1}{2\rho_k}\norm{\mu_m}^2_{L^2(\Omega)},
\end{align*}
where we applied Young's inequality.
\end{proof}

We will now use Assumption \ref{ass:termMuBounded} to prove feasibility of $y^*=S(u^*)$.

\begin{lemma}\label{lemma:convy_feasible}
Let Assumption \ref{ass:termMuBounded} be satisfied. Further, let $(\mu_k)_k \in L^2(\Omega)$ and let $(\rho_k)_k$ be a sequence of positive numbers with $\rho_k\rightarrow\infty$. Let $(\by_k,\bu_k,\bp_k)_k$ be a sequence of solutions of \eqref{AL:optsys}. Let $u^*$ denote a weak limit point of $(\bu_k)_k$. Then the associated state $y^*=S(u^*)$ is feasible, i.e., $y^*\leq \psi$.
\begin{proof}
For every subsequence $\bu_{k'}\rightharpoonup u^*$ in $\U$ we have due to Theorem \ref{theo:stateeq} $\by_{k'}\rightarrow y^*$ in $\Y$  and hence $\by_{k'}\rightarrow y^*$ in $L^2(\Omega)$. Due to Assumption \ref{ass:termMuBounded} the identity
\begin{align*}
\frac{1}{\rho_{k'}}\norm{\bmu_{k'}}^2_{L^2(\Omega)} = \rho_{k'}\norm{\left(\frac{\mu_{k'}}{\rho_{k'}}+\by_{k'}-\psi\right)_+}^2_{L^2(\Omega)}
\end{align*}
is bounded in $L^2(\Omega)$. Exploiting $\norm{(\by_{k'}-\psi)_+}^2_{L^2(\Omega)}\leq \norm{\left(\frac{\mu_{k'}}{\rho_{k'}}+\by_{k'}-\psi\right)_+}^2_{L^2(\Omega)}$, we can argue $y^*\leq \psi$ for $k'\rightarrow\infty$.
\end{proof}
\end{lemma}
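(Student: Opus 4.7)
The plan is to combine the strong state-convergence provided by Theorem \ref{theo:stateeq} with the boundedness hypothesis of Assumption \ref{ass:termMuBounded}, exploiting the nonnegativity of the multiplier updates $\mu_k$ which is enforced by Algorithm \ref{alg_detail}.

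First I would fix a subsequence along which $\bu_{k'}\rightharpoonup u^*$ in $\U$. Theorem \ref{theo:stateeq} then delivers strong convergence $\by_{k'}\to y^*=S(u^*)$ in $\Y$, and in particular in $L^2(\Omega)$. Second, I would recast the quantity controlled by Assumption \ref{ass:termMuBounded} by factoring out $\rho_k$: since $\bmu_k=(\mu_k+\rho_k(\by_k-\psi))_+$,
\[
\frac{1}{\rho_{k}}\norm{\bmu_{k}}^2_{L^2(\Omega)}
=\rho_{k}\norm{\Big(\tfrac{\mu_{k}}{\rho_{k}}+\by_{k}-\psi\Big)_+}^2_{L^2(\Omega)}\le C
\]
for some $C>0$ independent of $k$. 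Since $\rho_k\to\infty$, dividing gives
\[
\norm{\Big(\tfrac{\mu_{k}}{\rho_{k}}+\by_{k}-\psi\Big)_+}_{L^2(\Omega)}\longrightarrow 0.
\]

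Third, I would use that the iterates of Algorithm \ref{alg_detail} satisfy $\mu_k\ge 0$ almost everywhere: $\mu_1\ge 0$ by the initialization, and each step either keeps $\mu_{k+1}=\mu_k$ or sets $\mu_{k+1}=(\mu_k+\rho_k(\by_k-\psi))_+\ge 0$. Consequently the pointwise monotonicity
\[
0\le (\by_{k}-\psi)_+ \;\le\; \Big(\tfrac{\mu_{k}}{\rho_{k}}+\by_{k}-\psi\Big)_+
\]
holds a.e., so $\norm{(\by_{k'}-\psi)_+}_{L^2(\Omega)}\to 0$. Combining this with the strong $L^2$-convergence $\by_{k'}\to y^*$ forces $(y^*-\psi)_+=0$ a.e.; since $y^*\in\CO$, this inequality then holds at every point of $\bar\Omega$, which is the claim $y^*\le\psi$.

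The main obstacle is essentially just spotting the algebraic identity that rewrites $\tfrac{1}{\rho_k}\norm{\bmu_k}^2_{L^2(\Omega)}$ with the explicit factor $\rho_k$ pulled outside the squared $L^2$-norm, because that is what turns an a priori only $L^2$-bound on $\bmu_k/\sqrt{\rho_k}$ into a quantitative vanishing statement on the constraint violation. After that one only needs $\mu_k\ge 0$ to dispose of the harmless term $\mu_k/\rho_k$; no additional compactness beyond what Theorem \ref{theo:stateeq} already supplies is required.
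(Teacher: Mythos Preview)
Your proof is correct and follows essentially the same route as the paper: the same algebraic rewriting $\tfrac{1}{\rho_k}\norm{\bmu_k}^2=\rho_k\norm{(\mu_k/\rho_k+\by_k-\psi)_+}^2$, the same monotonicity inequality $(\by_k-\psi)_+\le(\mu_k/\rho_k+\by_k-\psi)_+$, and the same passage to the limit via Theorem \ref{theo:stateeq}. You are simply more explicit than the paper in justifying $\mu_k\ge 0$ and in upgrading the a.e.\ inequality to all of $\bar\Omega$ via continuity.
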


Exploiting Lemma \ref{lemma:convy_feasible} it can be shown that the augmented Lagrange algorithm makes infinitely many successful steps.
\begin{theorem}[\textbf{Infinitely many successful steps}]
Under Assumption \ref{ass:termMuBounded} the augmented Lagrange algorithm makes infinitely many successful steps.
\label{theo:infsucstep}
\end{theorem}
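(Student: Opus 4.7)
The plan is a proof by contradiction: assume that only finitely many steps of Algorithm \ref{alg_detail} are successful, and derive that a subsequence of $(R_k)_k$ tends to zero, which is incompatible with non-successfulness of all late steps. Concretely, if $m$ is an index beyond which no step is successful, then by the update rules one has $\mu_k = \mu_m$ for all $k>m$, the penalty grows as $\rho_k = \theta^{k-m}\rho_m \to \infty$, and the counter $n$ freezes, so the reference value $R_{n-1}^+$ stabilizes at some fixed $R^\star > 0$ (strictly positive thanks to the initialization $R_0^+ \gg 1$). Unsuccessfulness of step $k$ then reads $R_k > \tau R^\star$ for every $k > m$.

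Next I would split the residual into the two terms
\[
R_k = \norm{(\by_k - \psi)_+}_{\CO} + (\bmu_k,\psi-\by_k)_+
\]
and show that both vanish along a suitable subsequence. The complementarity contribution is handled by Lemma \ref{lem:limsup_compl_mufixed}, which already delivers $\limsup_{k\to\infty}(\bmu_k,\psi-\by_k)_+ \le 0$ in the scenario of only finitely many successful steps (since in that regime $\mu_k$ is eventually constant and $\rho_k\to\infty$). For the primal violation I would use that $(\bu_k)_k \subset \Uad$ is bounded in $L^2(\Omega)$ by the control constraints, extract a subsequence $\bu_{k'} \rightharpoonup u^*$ in $L^2(\Omega)$, and invoke Theorem \ref{theo:stateeq} to obtain strong convergence $\by_{k'} \to y^* = S(u^*)$ in $\CO$. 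Lemma \ref{lemma:convy_feasible}, which is precisely where Assumption \ref{ass:termMuBounded} enters, then guarantees $y^* \le \psi$ on $\bar\Omega$, so that $\norm{(\by_{k'}-\psi)_+}_{\CO} \to 0$ by uniform convergence.

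Combining the two limits gives $\limsup_{k'\to\infty} R_{k'} \le 0$, hence $R_{k'} \to 0$, which contradicts $R_{k'} > \tau R^\star > 0$. Consequently the assumption that only finitely many steps are successful is untenable.

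The main obstacle is really the feasibility of the weak limit $y^*$, since the non-successful scenario provides no direct mechanism to drive $(\by_k - \psi)_+$ to zero; this is exactly why Assumption \ref{ass:termMuBounded} is essential, as it forces the scaled square distance $\rho_{k'}\norm{(\mu_{k'}/\rho_{k'} + \by_{k'}-\psi)_+}_{L^2(\Omega)}^2$ to stay bounded while $\rho_{k'}\to\infty$. A minor technical point to verify is that the reference value $R_{n-1}^+$ used for comparison in step 4 is indeed strictly positive throughout the argument, which is secured by the choice $R_0^+ \gg 1$.
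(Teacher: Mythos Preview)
Your proposal is correct and follows essentially the same approach as the paper's own proof: argue by contradiction, freeze $\mu_k$ and let $\rho_k\to\infty$, then use Lemma \ref{lem:limsup_compl_mufixed} for the complementarity term and Lemma \ref{lemma:convy_feasible} (via Assumption \ref{ass:termMuBounded}) for the feasibility term to force $R_{k'}\to 0$ along a subsequence, contradicting $R_k>\tau R^\star>0$. Your write-up is in fact slightly more detailed than the paper's, which simply cites the two lemmas and states the contradiction.
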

\begin{proof}
As in the proof of \cite[Lemma 3.8]{KarlWachsmuth2017augmented} we
assume that the algorithm does a finite number of successful steps, only. Then there is an index $m$ such that all steps $k$ with $k>m$ are not successful. According to Algorithm \ref{alg_detail} it holds $\mu_k=\mu_m$ for all $k>m$, $R_k>\tau R_m>0$ and $\rho_k\rightarrow\infty$. However, by Lemma \ref{lem:limsup_compl_mufixed} and Lemma \ref{lemma:convy_feasible} we find a subsequence
$$  \lim_{k'\rightarrow\infty}R_{k'}=\lim_{k'\rightarrow\infty}\norm{(\by_{k'}-\psi)_+}_{C(\bar{\Omega})} + ( \bmu_{k'},\psi-\by_{k'})_+=0,$$
yielding a contradiction.
\end{proof}

Let us recall that Assumption  \ref{ass:termMuBounded} which is the basis for proving that the algorithm makes infinitely many successful steps is a rather strong assumption. We therefore want to argue that it can be satisfied, if we take $(\by_k,\bu_k)$ to be global minimizers of the augmented Lagrange sub-problem.

\begin{lemma}\label{lemma:Ass_mubounded_satisfied_globSol}
Assume that in step 1 of Algorithm \ref{alg_detail}, the pair
$(\by_k,\bu_k)$ is chosen to be a global minimizer of the augmented Lagrange sub-problem.
Assume that only finitely many steps of Algorithm \ref{alg_detail} are successful.
Then Assumption \ref{ass:termMuBounded} is satisfied.
\end{lemma}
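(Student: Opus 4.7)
The plan is to exploit global minimality of $\bu_k$ in the augmented Lagrange sub-problem \probALk by comparing its value with that of \emph{any} admissible feasible point of the original problem \eqref{eq:ALS:optcontprob}. The existence of such a point is guaranteed by the standing nonemptiness assumption on $\Fad$. Since $J \geq 0$, this comparison immediately controls the troublesome penalty term $\frac{1}{2\rho_k}\|\bmu_k\|_{L^2(\Omega)}^2$.

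In detail, the reasoning would proceed as follows. First, I would record the consequences of having only finitely many successful steps. By inspecting Algorithm \ref{alg_detail}, there is an index $m$ such that for every $k > m$ one has $\mu_k = \mu_m$ and $\rho_{k+1} = \theta\rho_k$, whence $\rho_k \to \infty$; moreover $\mu_m \ge 0$ by construction. In particular $\rho_k \ge \rho_1 > 0$ for all $k$.

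Second, I would fix an arbitrary feasible pair $(\tilde y, \tilde u) \in \Fad$, so that $\tilde u \in \Uad$, $\tilde y = S(\tilde u)$ and $\tilde y \le \psi$ on $\bar\Omega$. Since $\mu_m \ge 0$ and $\tilde y - \psi \le 0$ pointwise a.e., the pointwise inequality
\[
0 \;\le\; \bigl(\mu_m + \rho_k(\tilde y - \psi)\bigr)_+ \;\le\; \mu_m
\]
holds, and hence
\[
f_{AL}(\tilde u, \mu_m, \rho_k) \;=\; J(\tilde y, \tilde u) + \frac{1}{2\rho_k}\bigl\|(\mu_m + \rho_k(\tilde y - \psi))_+\bigr\|_{L^2(\Omega)}^2 \;\le\; J(\tilde y, \tilde u) + \frac{1}{2\rho_k}\|\mu_m\|_{L^2(\Omega)}^2.
\]

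Third, I would use the global optimality of $\bu_k$ for \probALk together with $J(\by_k,\bu_k)\ge 0$ to write
\[
\frac{1}{2\rho_k}\|\bmu_k\|_{L^2(\Omega)}^2 \;\le\; J(\by_k,\bu_k) + \frac{1}{2\rho_k}\|\bmu_k\|_{L^2(\Omega)}^2 \;=\; f_{AL}(\bu_k, \mu_m, \rho_k) \;\le\; f_{AL}(\tilde u, \mu_m, \rho_k).
\]
Combining the two displays yields
\[
\frac{1}{\rho_k}\|\bmu_k\|_{L^2(\Omega)}^2 \;\le\; 2\,J(\tilde y, \tilde u) + \frac{1}{\rho_k}\|\mu_m\|_{L^2(\Omega)}^2 \;\le\; 2\,J(\tilde y,\tilde u) + \frac{1}{\rho_1}\|\mu_m\|_{L^2(\Omega)}^2,
\]
which is a $k$-independent bound; this is precisely Assumption \ref{ass:termMuBounded}.

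There is no genuine obstacle here: the argument is a standard comparison trick and the only subtlety is recognizing that the \emph{global} minimality hypothesis is essential (a mere KKT or local‑minimality property would not allow the test against $\tilde u$), and that it is exactly the nonnegativity $\mu_m \ge 0$ (propagated through the update rule in Step 4 of Algorithm \ref{alg_detail} starting from $\mu_1 \ge 0$) together with feasibility of $\tilde u$ that forces the test value of the penalty term to stay bounded independently of $\rho_k$.
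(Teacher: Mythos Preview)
Your proposal is correct and follows essentially the same comparison argument as the paper: bound $f_{AL}(\bu_k,\mu_m,\rho_k)$ from above by testing against a feasible point of \eqref{eq:ALS:optcontprob} and use $(\mu_m+\rho_k(\tilde y-\psi))_+\le \mu_m$. The only cosmetic difference is that the paper tests against a global solution $\bu$ of \eqref{eq:ALS:optcontprob} rather than an arbitrary feasible $\tilde u$, which makes no difference for the argument.
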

\begin{proof}
Let $(S(\bu),\bar u)$ be a global solution of the original problem. Let $k>m$, where $m$ is the largest index
of a successful step. This implies $\mu_k=\mu_m$. Then we obtain
\begin{align*}
0&\leq f_{AL}(\bu_k,\mu_k,\rho_k) = f(\bu_k) + \frac1{2\rho_k}\norm{\bmu_k}^2_{L^2(\Omega)}\\
 &\le f_{AL}(\bu,\mu_k,\rho_k)
= f(\bar u)  + \frac1{2\rho_k}\norm{(\mu_k+\rho_k(S(\bu)-\psi))_+}^2_{L^2(\Omega)}\\
&= f(\bar u)  + \frac1{2\rho_k}\norm{(\mu_m+\rho_k(S(\bu)-\psi))_+}^2_{L^2(\Omega)}\leq f(\bar u)  + \frac1{2\rho_k}\norm{\mu_m}^2_{L^2(\Omega)}.
\end{align*}
Hence, Assumption \ref{ass:termMuBounded} is clearly satisfied.
\end{proof}

\begin{lemma}\label{lemma:globsol_infsuc}
Assume that in step 1 of Algorithm \ref{alg_detail}, the pair
$(\by_k,\bu_k)$ is chosen to be global minimizers of the augmented Lagrange sub-problem. Then the augmented Lagrange algorithm makes infinitely many successful steps and any limit point $y^*$ of $(y_n^+)_n$ corresponding to $(u_n^+)_n$ is feasible for \eqref{eq:ALS:optcontprob}.
\end{lemma}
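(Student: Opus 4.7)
The first assertion is obtained by chaining the two preceding lemmas. Suppose, for contradiction, that only finitely many steps of Algorithm \ref{alg_detail} are successful. Since each $(\by_k,\bu_k)$ is a global minimizer of the augmented Lagrange sub-problem, Lemma \ref{lemma:Ass_mubounded_satisfied_globSol} guarantees that Assumption \ref{ass:termMuBounded} holds. But Theorem \ref{theo:infsucstep} then asserts that infinitely many successful steps are made, contradicting the hypothesis. Hence infinitely many successful steps occur.

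For the second assertion I would exploit the geometric decrease built into the update rule. By construction, a step $k$ is declared successful precisely when $R_k \le \tau R_{n-1}^+$ with $\tau\in(0,1)$, and the corresponding value is stored as $R_n^+$. Iterating,
\[
 R_n^+ \le \tau R_{n-1}^+ \le \tau^{n} R_0^+ \xrightarrow{n\to\infty} 0.
\]
Since $R_n^+=\norm{(y_n^+-\psi)_+}_{\CO}+(\mu_n^+,\psi-y_n^+)_+$ is a sum of non-negative terms, in particular $\norm{(y_n^+-\psi)_+}_{\CO}\to 0$.

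Now let $y^*$ be a limit point of $(y_n^+)_n$ corresponding to $(u_n^+)_n$. Because $\Uad$ is bounded in $L^\infty(\Omega)$, and hence in $L^2(\Omega)$, we may pass to a subsequence along which $u_{n'}^+\rightharpoonup u^*$ weakly in $L^2(\Omega)$. Theorem \ref{theo:stateeq} then yields $y_{n'}^+=S(u_{n'}^+)\to S(u^*)$ strongly in $Y=H^1(\Omega)\cap \CO$, so this strong limit must coincide with $y^*$. Passing to the limit in $\norm{(y_{n'}^+-\psi)_+}_{\CO}\to 0$ via the continuous embedding $Y\hookrightarrow \CO$ gives $(y^*-\psi)_+=0$, that is, $y^*\le\psi$, so $y^*$ is feasible for \eqref{eq:ALS:optcontprob}.

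The argument is really just a bookkeeping exercise given the earlier results; the only point requiring care is that the global-minimizer hypothesis is invoked solely through Lemma \ref{lemma:Ass_mubounded_satisfied_globSol}, which in turn is what rules out the finite-successful-steps scenario and thereby activates the geometric decay of $R_n^+$ on which the feasibility argument rests.
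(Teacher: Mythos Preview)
Your proof is correct and follows essentially the same route as the paper's: the contradiction argument via Lemma \ref{lemma:Ass_mubounded_satisfied_globSol} and Theorem \ref{theo:infsucstep} is identical, and the feasibility of limit points is deduced from $R_n^+\to 0$ (the paper states this in one line, while you spell out the geometric decay $R_n^+\le\tau^n R_0^+$ and the passage to the limit explicitly). The detour through weak convergence of $(u_{n'}^+)$ and Theorem \ref{theo:stateeq} is not strictly needed---once $\norm{(y_{n'}^+-\psi)_+}_{\CO}\to 0$ and $y_{n'}^+\to y^*$ in $\CO$, Lipschitz continuity of $(\cdot)_+$ already gives $(y^*-\psi)_+=0$---but it does no harm.
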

\begin{proof}
Assuming that only finitely many steps are successful we know from Lemma \ref{lemma:Ass_mubounded_satisfied_globSol} that Assumption \ref{ass:termMuBounded} is satisfied. However, then from Theorem \ref{theo:infsucstep} we obtain a contradiction. Hence we know that Algorithm \ref{alg_detail} makes infinitely many successful steps. Since $R_n^+$ tends to zero, the term $\norm{(y_n^+-\psi)_+}_\CO$ yields feasibility of any limit point of $(y_n^+)_n$.
\end{proof}

Without any further assumptions our algorithm yields the following convergence properties.

\begin{theorem}[\bf{Convergence to feasible points}] \label{theo:statpoint}
Let $(y_n^+,u_n^+,p_n^+,\mu_n^+)_n$ denote a sequence generated by Algorithm \ref{alg_detail}. Let Assumption \ref{ass:termMuBounded} be satisfied. Let $u^*$ denote a weak limit point of $(u_n^+)_n$.
Then $y^*=S(u^*)$ is feasible i.e., $y^*\leq\psi$ and it holds $\lim_{n'\rightarrow\infty}(\mu_{n'}^+,\psi-y_{n'}^+)_+=0$.
\end{theorem}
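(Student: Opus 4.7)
My plan is to exploit the fact that Assumption~\ref{ass:termMuBounded} together with Theorem~\ref{theo:infsucstep} guarantees that Algorithm~\ref{alg_detail} produces infinitely many successful steps, so that the sequence $(y_n^+,u_n^+,p_n^+,\mu_n^+)_n$ is well-defined as an infinite sequence indexed by the successful steps. Once this is established, the conclusion follows by tracking what the algorithm's sufficient-decrease rule enforces on the quantity $R_n^+$.

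First, I would observe that the update rule in step~4 of Algorithm~\ref{alg_detail} is set up so that whenever step $k$ is successful, we have $R_k \le \tau R_{n-1}^+$, and $R_n^+$ is redefined as this $R_k$. Iterating gives $R_n^+ \le \tau^n R_0^+$, and because $\tau\in(0,1)$, this forces $R_n^+ \to 0$ as $n\to\infty$. Both contributions to
\[
R_n^+ = \norm{(y_n^+-\psi)_+}_{\CO} + (\mu_n^+,\psi-y_n^+)_+
\]
are nonnegative, so each one tends to zero individually. In particular the whole sequence $(\mu_n^+,\psi-y_n^+)_+\to 0$, which already gives the second claim (and is even stronger than what is asserted along a subsequence).

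Next, for the feasibility statement, I would pick a weakly convergent subsequence $u_{n'}^+\rightharpoonup u^*$ in $L^2(\Omega)$, which exists because $(u_n^+)_n\subset \Uad$ is bounded in $L^2(\Omega)$. By the continuity statement at the end of Theorem~\ref{theo:stateeq}, the corresponding states satisfy $y_{n'}^+ = S(u_{n'}^+) \to S(u^*) =: y^*$ strongly in $H^1(\Omega)\cap \CO$, hence in particular uniformly on $\bar\Omega$. Combining this uniform convergence with $\norm{(y_{n'}^+-\psi)_+}_{\CO}\to 0$ from the previous paragraph yields $(y^*-\psi)_+=0$ on $\bar\Omega$, that is $y^*\le\psi$, which is feasibility.

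I do not expect a serious obstacle here: the work has been pushed into Theorem~\ref{theo:infsucstep} (whose proof in turn rested on Lemma~\ref{lem:limsup_compl_mufixed} and Lemma~\ref{lemma:convy_feasible}) and into the geometric-decay structure built into the update rule. The only technical point to be careful about is that $R_n^+$ refers to the value at the $n$-th successful step and not at the $n$-th outer iteration, so that the recursion $R_n^+\le\tau R_{n-1}^+$ is valid exactly because each successful step by definition improves the previous benchmark $R_{n-1}^+$ by the factor $\tau$.
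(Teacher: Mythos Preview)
Your proposal is correct and follows essentially the same route as the paper's proof: invoke Theorem~\ref{theo:infsucstep} to obtain infinitely many successful steps, use the update rule to conclude $R_n^+\to 0$, and then pass to the limit along a weakly convergent subsequence using the strong convergence of states from Theorem~\ref{theo:stateeq}. The only difference is that you spell out the geometric decay $R_n^+\le\tau\,R_{n-1}^+$ explicitly, whereas the paper simply asserts $\lim_{n'\to\infty}R_{n'}^+=0$ as a direct consequence of there being infinitely many successful steps.
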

\begin{proof}
By the boundedness of $(u_{n}^+)_{n}\in \Uad$ we get existence of a subsequence $u_{n'}^+\rightharpoonup u^*$ in $L^2(\Omega)$ and $y_{n'}^+\rightarrow y^*$ in $\Y$. Theorem \ref{theo:infsucstep} guarantees that the algorithm makes infinitely many successful steps. Hence $$0\leq \lim_{n'\rightarrow \infty} R_{n'}^+ = \lim_{n'\rightarrow \infty} \norm{(y_{n'}^+-\psi)_+}_{\CO} + ( \mu_{n'}^+,\psi-y_{n'}^+)_+ = 0.$$ Thus $y^*\leq \psi$ and $\lim_{n'\rightarrow\infty}(\mu_{n'}^+,\psi-y_{n'}^+)_+=0$.
\end{proof}

In Theorem \ref{theo:statpoint} we have proven that a weak limit point $u^*$ of $(u_n^+)_n$ with corresponding state $y^*$ is feasible for \eqref{eq:ALS:optcontprob}. However, we do not know yet, if $u^*$ is a stationary point, i.e., if $(p_n^+,\mu_n^+)_n$ converges in some sense to $(p^*,\mu^*)$ such that $(y^*,u^*,p^*,\mu^*)$ satisfies the optimality system \eqref{eq:kkt_o}.
To achieve this aim, we have to suppose additional properties of the weak limit point $u^*$. In the next subsection we will investigate the impact on our convergence result if our algorithm generates a sequence with weak limit point $u^*$ that satisfies a linearized Slater condition.

\subsection{Convergence towards KKT Points}
We have shown in the previous section that the augmented Lagrange algorithm converges on a subsequence to a feasible point. Now we want to extend our results by proving convergence to a KKT point. We start with several auxiliary results.

\begin{lemma}\label{lemma:convLinearizedState}
Let $(u_k)_k,(h_k)_k$ denote sequences in $L^2(\Omega)$ that converge weakly to the limits $u^*,h^*$, respectively. 
Then, for $k\rightarrow\infty$ we have
$$\norm{S'(u_k)h_k - S'(u^*)h^*}_{\CO}\rightarrow 0 .$$
\end{lemma}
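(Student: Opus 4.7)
The plan is to split the difference $S'(u_k)h_k - S'(u^*)h^*$ via an intermediate quantity, freezing first the point of linearization and then the right-hand side. Denote $z_k := S'(u_k)h_k$, $z^* := S'(u^*)h^*$, and introduce
\[
\tilde z_k := S'(u^*) h_k,
\]
so that $z_k - z^* = (z_k - \tilde z_k) + (\tilde z_k - z^*)$. The two pieces will be handled by different mechanisms.

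First I would treat $\tilde z_k - z^*$. By Theorem \ref{theo:stateeq_diff}, $\tilde z_k - z^*$ satisfies the \emph{same} linear equation
\[
A(\tilde z_k - z^*) + d_y(S(u^*))(\tilde z_k - z^*) = h_k - h^*, \qquad \partial_{\nu_A}(\tilde z_k - z^*)=0,
\]
with a fixed, bounded, nonnegative zeroth-order coefficient $d_y(S(u^*))\in L^\infty(\Omega)$. Since $h_k\rightharpoonup h^*$ in $L^2(\Omega)$ and $L^2(\Omega)\hookrightarrow H^{-1}(\Omega)$ compactly, the convergence statement in Theorem \ref{theo:stateeq}, applied to this linear PDE (which is just a special case of the semilinear equation with $d(x,y)=d_y(S(u^*))(x)\,y$), yields $\tilde z_k \to z^*$ strongly in $H^1(\Omega)\cap C(\bar\Omega)$.

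For the other piece, subtracting the defining equations of $z_k$ and $\tilde z_k$ gives
\[
A(z_k-\tilde z_k) + d_y(S(u_k))\,(z_k-\tilde z_k) = \bigl(d_y(S(u^*)) - d_y(S(u_k))\bigr)\tilde z_k
\]
with homogeneous Neumann data. Since $(u_k)_k$ is bounded in $L^2(\Omega)$, Theorem \ref{theo:stateeq} gives a uniform bound on $\|S(u_k)\|_{C(\bar\Omega)}$; by Assumption \ref{ass:standing}(5) the derivative $d_y$ is Lipschitz on the resulting bounded set, so $\|d_y(S(u_k))\|_{L^\infty}$ is uniformly bounded and nonnegative. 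Also Theorem \ref{theo:stateeq} gives $S(u_k)\to S(u^*)$ strongly in $C(\bar\Omega)$, hence
\[
\bigl\|d_y(S(u^*))-d_y(S(u_k))\bigr\|_{L^\infty(\Omega)} \to 0.
\]
Since $(\tilde z_k)_k$ is bounded in $C(\bar\Omega)$ (as $(h_k)_k$ is bounded in $L^2$ and $S'(u^*)$ maps continuously into $C(\bar\Omega)$), the right-hand side tends to zero in $L^2(\Omega)$. Applying the $L^2\to C(\bar\Omega)$ estimate from Theorem \ref{theo:stateeq} to the linear PDE above, with constants uniform in $k$ because $d_y(S(u_k))\ge 0$ and uniformly $L^\infty$-bounded, yields $\|z_k-\tilde z_k\|_{C(\bar\Omega)}\to 0$.

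The conclusion follows by the triangle inequality. The main obstacle is the uniformity of the constant in the $L^2 \to C(\bar\Omega)$ estimate used in the last step: one must observe that the constants in Theorem \ref{theo:stateeq} depend on the zeroth-order coefficient only through an $L^\infty$-bound together with the sign condition, and that both are uniform along the sequence $(u_k)_k$. Apart from that, the argument is essentially a two-step perturbation of a linear elliptic problem whose coefficient and right-hand side converge strongly, which permits passage to the limit.
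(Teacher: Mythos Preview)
Your proof is correct and close in spirit to the paper's, but the decomposition differs. The paper does not introduce the intermediate $\tilde z_k = S'(u^*)h_k$; instead it writes a \emph{single} equation for $e_k := z_k - z^*$ and rearranges $d_y(y_k)z_k - d_y(y^*)z^* = (d_y(y_k)-d_y(y^*))z_k + d_y(y^*)e_k$ to obtain
\[
A e_k + d_y(y^*)\,e_k = (h_k - h^*) - \bigl(d_y(y_k)-d_y(y^*)\bigr)z_k,
\]
an equation with the \emph{fixed} coefficient $d_y(y^*)$ and a right-hand side converging weakly to $0$ in $L^2(\Omega)$. A single appeal to the compactness part of Theorem~\ref{theo:stateeq} then gives $e_k\to 0$ in $C(\bar\Omega)$. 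Your two-step splitting freezes first the linearization point and then the right-hand side, which forces you to treat an equation with the varying coefficient $d_y(S(u_k))$ and to argue uniformity of the $L^2\to C(\bar\Omega)$ constant along the sequence. You do address this correctly, but the paper's rearrangement sidesteps the issue entirely by keeping the coefficient fixed throughout; on the other hand, your second step uses only strong $L^2$-convergence of the source term and the a~priori estimate, not the weak-to-strong compactness statement. Either route is fine.
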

\begin{proof}
From Theorem \ref{theo:stateeq} we know that $y_k:=S(u_k)$ is the unique weak solution of the state equation
\begin{alignat*}{3}
-\Delta y_k +d(y_k) &= u_k &\quad &\text{ in }\Omega,\\
\partial_{\nu_A}y_k &= 0 &&\text{ on }\Gamma.
\end{alignat*}
Further for $u_k\rightharpoonup u^*$ in $L^2(\Omega)$ we get $y_k\rightarrow y^*$ in $\Y$. Let now $z_k$ denote the linearized state $z_k:= S'(u_k)h_k$. Then by Theorem \ref{theo:stateeq_diff} we know that $z_k$ is the unique solution of
\begin{alignat*}{3}
-\Delta z_k +d_y(y_k)z_k &= h_k &\quad &\text{ in }\Omega,\\
\partial_{\nu_A}z_k &= 0 &&\text{ on }\Gamma.
\end{alignat*}
Further let $z^*:=S'(u^*)h^*$ solve the equation
\begin{alignat*}{3}
-\Delta z^* +d_y(y^*)z^* &= h^* &\quad &\text{ in }\Omega,\\
\partial_{\nu_A}z^* &= 0 &&\text{ on }\Gamma.
\end{alignat*}
We subtract both PDEs and set $e_k:=S'(u_k)h_k-S'(u^*)h^*$
\begin{alignat*}{3}
-\Delta e_k +d_y(y_k)z_k-d_y(y^*)z^* &= h_k-h^* &\quad &\text{ in }\Omega,\\
\partial_{\nu_A} e_k &= 0 &&\text{ on }\Gamma.
\end{alignat*}
Inserting the identity $d_y(y_k)z_k-d_y(y^*)z^*= \left(d_y(y_k)-d_y(y^*)\right)z_k + d_y(y^*)(z_k-z^*)$ we obtain
\begin{alignat*}{3}
-\Delta e_k+d_y(y^*)e_k &= (h_k-h^*) - (d_y(y_k)-d_y(y^*))z_k&\quad &\text{ in }\Omega,\\
\partial_{\nu_A} e_k &= 0 &&\text{ on }\Gamma.
\end{alignat*}
From Assumption \ref{ass:standing} we know that $d_y(y)$  is locally Lipschitz continuous, i.e.,
\begin{align*}
\norm{d_y(y_1)-d_y(y_2)}_{L^\infty(\Omega)}\leq L\norm{y_1-y_2}_{L^\infty(\Omega)}.
\end{align*}
Concluding, for $y_{k}\rightarrow y^*$ in $L^\infty(\Omega)$ we have $d_y(y_k)\rightarrow d_y(y^*)$ in $L^\infty(\Omega)$. Due to $h_k\rightharpoonup h^*$ in $L^2(\Omega)$ and the boundedness of $z_k$ in $L^2(\Omega)$ we gain $e_k\rightarrow 0$ in $\Y$. Hence,
$$ \norm{S'(u_k)h_k-S'(u^*)h^*}_{\CO}\rightarrow 0$$
and the proof is done.
\end{proof}

Let us recall that $(y_n^+,u_n^+,p_n^+,\mu_n^+)$ denotes the solution of the $n$-th successful iteration of Algorithm \ref{alg_detail}. We want to investigate the convergence properties of the algorithm for a weak limit point $u^*$ of $(u_n^+)_n$. A point $u^*\in\Uad$ satisfies the linearized Slater condition if there exists a $\hat{u}\in \Uad$ and $\sigma>0$ such that
\begin{align}\label{eq:Slater_limitpoint}
S(u^*)(x)+S'(u^*)(\hat{u}-u^*)(x)\leq \psi(x) -\sigma\quad \forall x\in\bar{\Omega}.
\end{align}

\begin{lemma}\label{lemma:SucStep-LinSlater}
Let $u^*$ denote a 
weak limit point of $(u_n^+)_n$ that satisfies the linearized Slater condition \eqref{eq:Slater_limitpoint}. Then, there exists an $N\in \N$ such that for all $n'>N$ the control $u_{n'}^+$ satisfies 
\begin{align}\label{eq:Slater_nlarge}
S(u_{n'}^+)+S'(u_{n'}^+)(\hat{u}-u_{n'}^+) \leq \psi -\frac{\sigma}{2}.
\end{align}
\end{lemma}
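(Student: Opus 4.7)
The plan is to pass to the limit along a weakly convergent subsequence and exploit uniform convergence in $C(\bar\Omega)$ to inherit the Slater-type inequality with a slightly smaller constant. By definition of weak limit point, choose a subsequence (still denoted $u_{n'}^+$) with $u_{n'}^+ \rightharpoonup u^*$ in $L^2(\Omega)$. Then the sequence $h_{n'} := \hat u - u_{n'}^+$ also converges weakly in $L^2(\Omega)$ to $h^* := \hat u - u^*$.

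First, by Theorem~\ref{theo:stateeq}, weak convergence of $u_{n'}^+$ to $u^*$ in $L^2(\Omega)$ yields strong convergence of the states $S(u_{n'}^+) \to S(u^*)$ in $Y = H^1(\Omega)\cap C(\bar\Omega)$, in particular in $C(\bar\Omega)$. Second, and this is the step that does the real work, applying Lemma~\ref{lemma:convLinearizedState} to the sequences $(u_{n'}^+)$ and $(h_{n'})$ gives
\[
\norm{S'(u_{n'}^+)(\hat u - u_{n'}^+) - S'(u^*)(\hat u - u^*)}_{C(\bar\Omega)} \to 0.
\]
Summing these two convergences shows that the linearization at $u_{n'}^+$,
\[
g_{n'} := S(u_{n'}^+) + S'(u_{n'}^+)(\hat u - u_{n'}^+),
\]
converges uniformly on $\bar\Omega$ to $g^* := S(u^*) + S'(u^*)(\hat u - u^*)$.

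By the assumed linearized Slater condition~\eqref{eq:Slater_limitpoint} at $u^*$, we have $g^*(x) \leq \psi(x) - \sigma$ for all $x \in \bar\Omega$. Since $\norm{g_{n'} - g^*}_{C(\bar\Omega)} \to 0$, there exists $N \in \mathbb{N}$ such that $\norm{g_{n'} - g^*}_{C(\bar\Omega)} \leq \sigma/2$ for all $n' > N$, and therefore
\[
g_{n'}(x) \leq g^*(x) + \tfrac{\sigma}{2} \leq \psi(x) - \tfrac{\sigma}{2}
\]
uniformly in $x \in \bar\Omega$, which is \eqref{eq:Slater_nlarge}. The only mildly delicate point is ensuring uniform (not just $L^2$ or $H^1$) convergence of the linearized states, but this is exactly what Lemma~\ref{lemma:convLinearizedState} was established for, so no real obstacle remains.
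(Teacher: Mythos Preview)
Your proof is correct and follows essentially the same route as the paper: strong convergence of the states from weak convergence of the controls, Lemma~\ref{lemma:convLinearizedState} for uniform convergence of the linearized states, and then the additive decomposition to transfer the Slater inequality with margin $\sigma/2$. The only cosmetic difference is that the paper cites Theorem~\ref{theo:statpoint} rather than Theorem~\ref{theo:stateeq} for the state convergence, but the underlying argument is identical.
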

\begin{proof}
By Theorem \ref{theo:statpoint} we have strong convergence $S(u_{n'}^+)\rightarrow S(u^*)$ in $\Y$.
By Theorem \ref{lemma:convLinearizedState} we get $S'(u_{n'}^+)(\hat{u}-u_{n'}^+) \rightarrow S'(u^*)(\hat{u} - u^*)$ in $\Y$.
Using the identity
\begin{align*}
S(u_{n'}^+)+S'(u_{n'}^+)(\hat{u}-u_{n'}^+) &= S(u^*)+S'(u^*)(\hat{u}-u^*)\\
& \quad + S(u_{n'}^+)-S(u^*)\\
&\quad + S'(u_{n'}^+)(\hat{u}-u_{n'}^+)-S'(u^*)(\hat{u}-u^*)
\end{align*}
and exploiting the specified convergence results, we conclude of an $N\in \mathbb{N}$ such that
\begin{align*}
S(u_{n'}^+)+S'(u_{n'}^+)(\hat{u}-u_{n'}^+) &\leq \psi - \frac{\sigma}{2},\quad \forall {n'}>N. \qedhere
\end{align*}
\end{proof}

We recall an estimate for the second term of the update rule, see \cite[Lemma 3.9]{KarlWachsmuth2017augmented}, that is necessary to state $L^1$-boundedness of the Lagrange multiplier. This estimate does not require any additional assumption, it just results from the structure of the update rule.

\begin{lemma}
\label{lemma:scaprod_alg}
Let $y_n^+,\mu_n^+$ be given as defined in Algorithm \ref{alg_detail}. Then for all $n>1$ it holds
\begin{align*}
( \mu_n^+,\psi-y_n^+)_+\leq \tau^{n-1}\left(\norm{(y_1^+-\psi)_+}_{C(\bar{\Omega})}+\norm{\mu_1^+}_{L^2(\Omega)}\norm{(\psi-y_1^+)_+}_{L^2(\Omega)}\right).
\end{align*}
\end{lemma}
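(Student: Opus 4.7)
The plan is to exploit the geometric decrease of the residual $R_n^+$ across successful iterations, and then reduce the right-hand side of the inequality to an estimate of the initial residual $R_1^+$.

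First, I would observe that step~4 of Algorithm~\ref{alg_detail} guarantees, by the very definition of a successful step, the recursive bound $R_n^+ \leq \tau R_{n-1}^+$ for every $n \geq 2$, because the assignment $R_n^+ := R_k$ is only made when $R_k \leq \tau R_{n-1}^+$. Iterating this inequality yields the geometric estimate
\[
R_n^+ \leq \tau^{n-1} R_1^+ \quad \text{for all } n \geq 1.
\]
Moreover, since $\norm{(y_n^+-\psi)_+}_{\CO} \geq 0$, the defining formula of $R_n^+$ gives directly
\[
(\mu_n^+, \psi - y_n^+)_+ \leq R_n^+,
\]
so the claim reduces to estimating $R_1^+$ from above by the expression in parentheses on the right-hand side.

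Next, I would write out $R_1^+ = \norm{(y_1^+-\psi)_+}_{\CO} + (\mu_1^+, \psi - y_1^+)_+$ and bound the scalar-product term. Since $\mu_1^+ \geq 0$ pointwise by construction (it arises from a $\max$ with $0$), splitting the domain according to the sign of $\psi - y_1^+$ shows that
\[
(\mu_1^+, \psi - y_1^+) = \int_{\{\psi \geq y_1^+\}} \mu_1^+ (\psi - y_1^+)_+ \dx - \int_{\{\psi < y_1^+\}} \mu_1^+ (y_1^+ - \psi)_+ \dx \leq \int_\Omega \mu_1^+ (\psi - y_1^+)_+ \dx.
\]
Applying the Cauchy--Schwarz inequality to the last integral, and then taking positive parts, produces $(\mu_1^+, \psi - y_1^+)_+ \leq \norm{\mu_1^+}_{L^2(\Omega)} \norm{(\psi - y_1^+)_+}_{L^2(\Omega)}$. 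Combining this with the trivial bound for the supremum-norm term yields the desired estimate on $R_1^+$, and the final inequality of the lemma follows by chaining the three estimates.

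The argument is essentially a direct composition of routine inequalities, so no real obstacle is expected; the only subtle point is the sign argument for the scalar product, which crucially uses $\mu_1^+ \geq 0$ to convert an indefinite integrand into an upper bound involving only $(\psi - y_1^+)_+$ and thereby enables the application of Cauchy--Schwarz.
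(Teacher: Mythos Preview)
Your argument is correct. The paper does not actually supply a proof of this lemma but simply cites \cite[Lemma~3.9]{KarlWachsmuth2017augmented}; your derivation via the geometric decay $R_n^+\le\tau^{n-1}R_1^+$ built into the successful-step criterion, together with the Cauchy--Schwarz bound on $(\mu_1^+,\psi-y_1^+)_+$ exploiting $\mu_1^+\ge0$, is precisely the natural route and is what one expects to find in the cited reference.
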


\begin{lemma}[\textbf{Boundedness of the Lagrange multiplier}]\label{lemma:multL1_boundedness}
Assume that Algorithm \ref{alg_detail} generates the sequence $(y_n^+,u_n^+,p_n^+,\mu_n^+)_n$. Let $(u_{n'}^+)_{n'}$ denote a subsequence of $(u_n^+)_n$ that converges weakly to $u^*$. If $u^*$ satisfies the linearized Slater condition from \eqref{eq:Slater_limitpoint}, then the corresponding sequence of multipliers $(\mu_{n'}^+)_{n'}$ is bounded in $L^1(\Omega)$, i.e., there is a constant C > 0 independent of $n'$ such that for all $n'$ it holds
$$ \norm{\mu_{n'}^+}_{L^1(\Omega)} \leq C. $$
\end{lemma}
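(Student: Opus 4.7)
The plan is to combine the variational inequality with a Slater-point test, following the classical argument of Casas for state-constrained problems but adapted to the augmented Lagrange iterates. By Lemma \ref{lemma:SucStep-LinSlater} there is $N\in\mathbb{N}$ so that for every $n'>N$ the linearized state $z_{n'}:=S'(u_{n'}^+)(\hat u-u_{n'}^+)$ satisfies $z_{n'}\le \psi-y_{n'}^+-\sigma/2$ with $y_{n'}^+=S(u_{n'}^+)$. Inserting $u=\hat u$ into the variational inequality \eqref{AL3_varineq} for the $n'$-th successful iterate gives $(p_{n'}^+,\hat u-u_{n'}^+)\ge -\alpha(u_{n'}^+,\hat u-u_{n'}^+)$.

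Next I would use the natural duality between \eqref{AL2_adjoint} and the linearized state equation of Theorem \ref{theo:stateeq_diff}. Testing the adjoint equation for $p_{n'}^+$ with $z_{n'}$ and the equation for $z_{n'}$ with $p_{n'}^+$, and integrating by parts (both problems carry homogeneous Neumann boundary data, so the boundary terms vanish and the principal parts swap via $A$ and $A^*$), yields the identity
\[
 (p_{n'}^+,\hat u-u_{n'}^+)=(y_{n'}^+-y_d,z_{n'})+(\mu_{n'}^+,z_{n'}).
\]
Chaining this with the variational inequality from the first step produces
\[
 (\mu_{n'}^+,z_{n'})+(y_{n'}^+-y_d,z_{n'})+\alpha(u_{n'}^+,\hat u-u_{n'}^+)\ge 0.
\]
Because $\mu_{n'}^+\ge 0$ pointwise and $-z_{n'}\ge y_{n'}^+-\psi+\sigma/2$, the multiplier term is bounded from above by $(\mu_{n'}^+,\psi-y_{n'}^+)-\tfrac{\sigma}{2}\|\mu_{n'}^+\|_{L^1(\Omega)}$, so rearrangement gives
\[
 \tfrac{\sigma}{2}\|\mu_{n'}^+\|_{L^1(\Omega)}\le (y_{n'}^+-y_d,z_{n'})+\alpha(u_{n'}^+,\hat u-u_{n'}^+)+(\mu_{n'}^+,\psi-y_{n'}^+).
\]

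It remains to bound each term on the right uniformly in $n'$. The control admissibility bounds $(u_{n'}^+)$ in $L^2(\Omega)$; convergence $y_{n'}^+\to y^*$ from Theorem \ref{theo:stateeq} together with Lemma \ref{lemma:convLinearizedState} applied to the weakly convergent sequence $(\hat u-u_{n'}^+)$ ensures that $z_{n'}$ stays bounded (in fact converges) in $C(\bar\Omega)$, so the first two inner products are uniformly controlled. For the remaining term I would use the obvious estimate $(\mu_{n'}^+,\psi-y_{n'}^+)\le (\mu_{n'}^+,\psi-y_{n'}^+)_+$ and then invoke Lemma \ref{lemma:scaprod_alg}, which bounds the latter by $\tau^{n'-1}\bigl(\|(y_1^+-\psi)_+\|_{C(\bar\Omega)}+\|\mu_1^+\|_{L^2(\Omega)}\|(\psi-y_1^+)_+\|_{L^2(\Omega)}\bigr)$, hence by a constant independent of $n'$. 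Division by $\sigma/2>0$ then yields the claim. The main obstacle is book-keeping of signs: one must use $\mu_{n'}^+\ge 0$ together with the Slater inequality precisely once to isolate $\|\mu_{n'}^+\|_{L^1}$ on the left-hand side, while simultaneously keeping the complementarity-type remainder $(\mu_{n'}^+,\psi-y_{n'}^+)$ controllable via the algorithm's update rule rather than by the (unavailable) $L^1$-bound on the multiplier itself.
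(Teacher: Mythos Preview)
Your argument is correct and follows essentially the same route as the paper's proof: test the variational inequality \eqref{AL3_varineq} with the Slater point $\hat u$, express $(p_{n'}^+,\hat u-u_{n'}^+)$ via the adjoint relation (the paper writes this as $p_{n'}^+=S'(u_{n'}^+)^*(y_{n'}^+-y_d+\mu_{n'}^+)$, which is exactly your integration-by-parts identity), then use the transferred Slater inequality from Lemma~\ref{lemma:SucStep-LinSlater} together with $\mu_{n'}^+\ge 0$ to isolate $\tfrac{\sigma}{2}\|\mu_{n'}^+\|_{L^1(\Omega)}$, and finally control the complementarity remainder via Lemma~\ref{lemma:scaprod_alg}. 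The only cosmetic difference is that the paper bounds $(y_{n'}^+-y_d,z_{n'})$ and $\alpha(u_{n'}^+,\hat u-u_{n'}^+)$ through the a~priori estimate $\|S'(u_{n'}^+)(\hat u-u_{n'}^+)\|\le c\|\hat u-u_{n'}^+\|_{L^2(\Omega)}$ and a Young-inequality absorption, whereas you invoke convergence of $z_{n'}$ in $C(\bar\Omega)$ from Lemma~\ref{lemma:convLinearizedState}; both yield uniform bounds.
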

\begin{proof}

Writing \eqref{AL3_varineq} in variational form we see
\begin{align*}
(p_{n'}^+ +\alpha u_{n'}^+,u-u_{n'}^+)\geq 0 \qquad \forall u\in \Uad.
\end{align*}
Using the identity
$$ p_{n'}^+=S'(u_{n'}^+)^*(y_{n'}^+ -y_d+\mu_{n'}^+)$$
we obtain
\begin{align*}
(S'(u_{n'}^+)^*(y_{n'}^+ -y_d+\mu_{n'}^+)+\alpha u_{n'}^+,u-u_{n'}^+)\geq 0, \qquad \forall u\in \Uad.
\end{align*}
Rearranging terms yields
\begin{align*}
(\mu_{n'}^+,S'(u_{n'}^+)(u_{n'}^+-u))\leq (y_{n'}^+-y_d, S'(u_{n'}^+)(u-u_{n'}^+))+(\alpha u_{n'}^+,u-u_{n'}^+).
\end{align*}
Testing the left hand side of the previous inequality with the test function $u:=\hat{u}\in\Uad$ we get
\begin{align*}
 (\mu_{n'}^+,S'(u_{n'}^+)(u_{n'}^+-\hat{u}))&= (\mu_{n'}^+,S'(u_{n'}^+)(u_{n'}^+-\hat{u})) +(\mu_{n'}^+,S(u_{n'}^+)-\psi) -(\mu_{n'}^+,S(u_{n'}^+)-\psi)\\
&= - (\mu_{n'}^+,S(u_{n'}^+)+S'(u_{n'}^+)(\hat{u}-u_{n'}^+)-\psi) +(\mu_{n'}^+,S(u_{n'}^+)-\psi).
\end{align*}
By Lemma \ref{lemma:SucStep-LinSlater} we know that there exists an $N$ such that for all $n'>N$ the control $u_{n'}^+$ satisfies \eqref{eq:Slater_nlarge}. Hence for all $n'>N$ we obtain
\begin{align*}
\frac{\sigma}{2}\norm{\mu_{n'}^+}_{L^1(\Omega)}\leq - (\mu_{n'}^+,S(u_{n'}^+)+S'(u_{n'}^+)(\hat{u}-u_{n'}^+)-\psi).
\end{align*}
Thus, we estimate
\begin{align*}
\frac{\sigma}{2}&\norm{\mu_{n'}^+}_{L^1(\Omega)} \leq (\mu_{n'}^+,\psi - S(u_{n'}^+))+(y_{n'}^+-y_d, S'(u_{n'}^+)(\hat{u}-u_{n'}^+))+(\alpha u_{n'}^+,\hat{u}-u_{n'}^+)\\
&\leq  (\mu_{n'}^+,\psi-y_{n'}^+)_+ +(y_{n'}^+ -y_d,S'(u_{n'}^+)(\hat{u}-u_{n'}^+)) -\frac{\alpha}{2}\norm{u_{n'}^+-\hat{u}}_{L^2(\Omega)}^2+\frac{\alpha}{2}\norm{\hat{u}}_{L^2(\Omega)}^2
\end{align*}
and hence
\begin{align*}
\frac{\sigma}{2}\norm{\mu_{n'}^+}_{L^1(\Omega)} &+\frac{\alpha}{2}\norm{u_{n'}^+-\hat{u}}_{L^2(\Omega)}^2 \\
&\leq (\mu_{n'}^+,\psi-y_{n'}^+)_+  +\norm{y_{n'}^+-y_d}_{L^2(\Omega)}\norm{S'(u_{n'}^+)(\hat{u}-u_{n'}^+)}_{L^2(\Omega)} + \frac{\alpha}{2}\norm{\hat{u}}_{L^2(\Omega)}^2.
\end{align*}
From Theorem \ref{theo:stateeq_diff} we know, that $y_h:= S'(u_{n'}^+)(\hat{u}-u_{n'}^+)$ is the weak solution of a uniquely solvable partial differential equation with right-hand side $\hat{u}-u_{n'}^+$. Hence, it is norm bounded by $c\norm{\hat{u}-u_{n'}^+}_{L^2(\Omega)}$ with $c>0$ independent of $n$.
With Young's Inequality we obtain
\begin{align*}
\norm{\mu_{n'}^+}_{L^1(\Omega)} +\frac{\alpha}{2\sigma}\norm{u_{n'}^+-\hat{u}}_{L^2(\Omega)}^2
&\leq  \frac{2}{\sigma}(\mu_{n'}^+,\psi-y_{n'}^+)_+  +\frac{4c}{\sigma\alpha}\norm{y_{n'}^+-y_d}_{L^2(\Omega)}^2 +\frac{\alpha}{\sigma}\norm{\hat{u}}_{L^2(\Omega)}^2.
\end{align*}
Exploiting the boundedness of $\norm{y_{n'}^+-y_d}_{L^2(\Omega)}$, $\hat{u}\in\Uad$, and Lemma \ref{lemma:scaprod_alg} this yields the assertion.
\end{proof}

Let us conclude this section with the following result on convergence.

\begin{theorem}[\textbf{Convergence towards KKT points}]
Assume that Algorithm \ref{alg_detail} generates the sequence $(y_n^+,u_n^+,p_n^+,\mu_n^+)_n$. Let $u^*$ denote a weak limit point of $(u_n^+)_n$.
If $u^*$ satisfies the linearized Slater condition from \eqref{eq:Slater_limitpoint}, then there exist subsequences $(y_{n'}^+,u_{n'}^+,p_{n'}^+,\mu_{n'}^+)_{n'}$ of $(y_n^+,u_n^+,p_n^+,\mu_n^+)_n$ such that
\begin{alignat*}{5}
u_{n'}^+ &\rightarrow u^* &\quad &\text{ in }L^2(\Omega),& \qquad &y_{n'}^+ \rightarrow y^* &\quad & \text{ in } H^1(\Omega)\cap \CO,\\
p_{n'}^+&\rightharpoonup p^* && \text{ in } W^{1,s}(\Omega),\ s\in[1,N/(N-1)) &&\mu_{n'}^+ \rightharpoonup^* \mu^* && \text{ in } \MO
\end{alignat*}
and $(y^*,u^*,p^*,\mu^*)$ is a KKT point of the original problem \eqref{eq:ALS:optcontprob}.
\label{theo:convKKT}
\end{theorem}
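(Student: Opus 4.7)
The plan is to extract further subsequences from the given weakly convergent one and then pass to the limit in the sub-problem optimality system \eqref{AL:optsys} component by component. By the working hypothesis $u_{n'}^+ \rightharpoonup u^*$ in $L^2(\Omega)$ and Theorem \ref{theo:stateeq}, one obtains immediately the strong convergence $y_{n'}^+ \to y^* := S(u^*)$ in $H^1(\Omega)\cap C(\bar\Omega)$ and, using Theorem \ref{theo:statpoint}, the feasibility $y^*\le \psi$ and the vanishing $(\mu_{n'}^+,\psi-y_{n'}^+)_+ \to 0$.

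Because $u^*$ satisfies the linearized Slater condition \eqref{eq:Slater_limitpoint}, Lemma \ref{lemma:multL1_boundedness} yields a uniform bound of $(\mu_{n'}^+)_{n'}$ in $L^1(\Omega)$, hence in $\MO$. Invoking the Banach--Alaoglu theorem, a further subsequence satisfies $\mu_{n'}^+ \rightharpoonup^* \mu^*$ in $\MO$, and since every $\mu_{n'}^+$ is non-negative, so is $\mu^*$. Theorem \ref{theo:exsoladjointeq} applied to the adjoint equation \eqref{AL2_adjoint} then provides a uniform estimate of $(p_{n'}^+)_{n'}$ in $W^{1,s}(\Omega)$ for every $s\in [1,N/(N-1))$. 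Extracting once more, $p_{n'}^+ \rightharpoonup p^*$ in $W^{1,s}(\Omega)$ and, by compact embedding, $p_{n'}^+ \to p^*$ strongly in $L^q(\Omega)$ for a suitable $q$. Passing to the limit in the very weak formulation of \eqref{AL2_adjoint}, using uniform convergence $d_y(y_{n'}^+)\to d_y(y^*)$ in $L^\infty(\Omega)$ (from Assumption \ref{ass:standing} and $y_{n'}^+ \to y^*$ in $C(\bar\Omega)$), the strong $L^2$ convergence of $y_{n'}^+$, and the weak* convergence of $\mu_{n'}^+$ tested against $C(\bar\Omega)$ functions, I will obtain that $p^*$ solves \eqref{eq:kkt_o:2}.

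For the complementarity condition I decompose
\[
\langle \mu_{n'}^+,\psi - y_{n'}^+\rangle = \langle \mu_{n'}^+,\psi - y^*\rangle + \langle \mu_{n'}^+, y^* - y_{n'}^+\rangle,
\]
where the first term converges to $\langle \mu^*,\psi - y^*\rangle$ by weak* convergence, and the second vanishes because $\|\mu_{n'}^+\|_{\MO}$ is bounded while $y_{n'}^+ \to y^*$ in $C(\bar\Omega)$. Splitting the left-hand side into the positive and negative part of $\psi - y_{n'}^+$, Theorem \ref{theo:statpoint} drives $(\mu_{n'}^+,(\psi-y_{n'}^+)_+)\to 0$, while the remainder is controlled by $\|\mu_{n'}^+\|_{L^1}\|(y_{n'}^+-\psi)_+\|_\CO \to 0$. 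Combined with $\mu^*\ge 0$ and $y^*\le \psi$ this forces $\langle \mu^*, y^*-\psi\rangle = 0$. Strong convergence of the controls is then derived by inserting $u=u^*\in\Uad$ in the variational inequality \eqref{AL3_varineq}, using the strong $L^q$ convergence of $p_{n'}^+$ together with the $L^\infty$ bound on $u_{n'}^+$ to conclude $\limsup \|u_{n'}^+\|_{L^2}^2 \le \|u^*\|_{L^2}^2$, which together with weak lower semicontinuity upgrades the convergence to strong in $L^2(\Omega)$. Passing to the limit in \eqref{AL3_varineq} with an arbitrary $u\in\Uad$ finally delivers \eqref{eq:kkt_o:3}.

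The main obstacle I expect is Step 3--4: the multiplier sequence lies in $L^2(\Omega)$ but only admits a weak* limit in $\MO$, so concentration onto the boundary is a priori possible. Consequently, care is needed when testing the adjoint equation (which suggests using a very weak formulation) and when justifying the limit passage in the complementarity term, since only uniform convergence of $y^*-\psi$ and strong $C(\bar\Omega)$ convergence of $y_{n'}^+$ are available to pair against the measure. The duality between $\MO$ and $\CO$, together with the sharp $L^1$ bound furnished by Lemma \ref{lemma:multL1_boundedness}, is what makes the argument go through.
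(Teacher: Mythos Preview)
Your proof follows essentially the same route as the paper's: extract the weak* limit of the multipliers via the $L^1$ bound from Lemma \ref{lemma:multL1_boundedness}, deduce boundedness and weak convergence of the adjoint states from Theorem \ref{theo:exsoladjointeq}, pass to the limit in each relation of the optimality system, and finally upgrade the control convergence to strong. The paper argues in a slightly different order (it first obtains \eqref{eq:kkt_o:3} from weak lower semicontinuity of the norm and then derives strong convergence of $u_{n'}^+$ by testing \eqref{eq:kkt_o:3} and \eqref{AL3_varineq} against each other, yielding $\alpha\|u_{n'}^+-u^*\|^2\le(p^*-p_{n'}^+,u_{n'}^+-u^*)$), but your one-sided test with $u=u^*$ in \eqref{AL3_varineq} achieves the same end.

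One point needs fixing in the complementarity step. You claim that Theorem \ref{theo:statpoint} gives $(\mu_{n'}^+,(\psi-y_{n'}^+)_+)\to 0$, but the quantity appearing in $R_{n'}^+$ is $(\mu_{n'}^+,\psi-y_{n'}^+)_+=\max\bigl(0,\int_\Omega \mu_{n'}^+(\psi-y_{n'}^+)\,\mathrm{d}x\bigr)$, i.e.\ the positive part of the \emph{scalar} pairing, not the pairing against the pointwise positive-part function. These are not the same in general (only the inequality $(\mu,(\psi-y)_+)\ge(\mu,\psi-y)_+$ holds for $\mu\ge 0$), so your second splitting does not directly yield $\langle\mu_{n'}^+,\psi-y_{n'}^+\rangle\to 0$. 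The remedy is immediate and is precisely what the paper does: your first decomposition already shows $\langle\mu_{n'}^+,\psi-y_{n'}^+\rangle\to\langle\mu^*,\psi-y^*\rangle$; Theorem \ref{theo:statpoint} then gives $\limsup_{n'}\langle\mu_{n'}^+,\psi-y_{n'}^+\rangle\le 0$, hence $\langle\mu^*,\psi-y^*\rangle\le 0$, and together with $\mu^*\ge 0$, $\psi-y^*\ge 0$ this forces $\langle\mu^*,\psi-y^*\rangle=0$.
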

\begin{proof}
Since $(u_n^+)_n$ is bounded in $L^2(\Omega)$ we can extract a weak converging subsequence $u_{n'}^+\rightharpoonup u^*$ in $L^2(\Omega)$, thus $y_{n'}^+\rightarrow y^*$ in $H^1(\Omega)\cap\CO$ due to Theorem \ref{theo:stateeq}. Hence, \eqref{eq:kkt_o:1} ist satisfied. Since $u_{n'}^+$ satisfies a linearized Slater condition by Lemma \ref{lemma:SucStep-LinSlater} for $n'$ sufficiently large, Lemma \ref{lemma:multL1_boundedness} yields $L^1$-boundedness of $(\mu_{n'}^+)_{n'}$. Hence, we can extract a weak* converging subsequence in $\MO$ denoted w.l.o.g. by $\mu_{n'}\rightharpoonup^* \mu^*$, see \cite{HinzeMeyer2010lavrentiev}. Convergence of $p_{n'}^+ \rightharpoonup p^*$ in $W^{1,s}(\Omega),s\in[1,N/(N-1))$ can now be shown as in \cite[Lemma 11]{KrumbiegelNeitzelRoesch2012regSemilinOptCont}. Thus, the adjoint equation \eqref{eq:kkt_o:2} is satisfied. The space $W^{1,s}(\Omega)$,  is compactly embedded in $L^2(\Omega)$. Hence $p_{n'}^+ \rightarrow p^*$ in $L^2(\Omega)$ and we get
\begin{align*}
0&\leq \underset{n\rightarrow\infty}{\lim\inf}(p_{n'}^+ +\alpha u_{n'}^+,u-u_{n'}^+) = (p^*,u-u^*) - \underset{k\rightarrow\infty}{\lim\inf} (\alpha u_{n'}^+,u_{n'}^+-u)\\
& \leq (p^*,u-u^*) - (\alpha u^*,u^*-u)= (p^* + \alpha u^*,u-u^*),
\end{align*}
where we exploited the weak lower semicontinuity of $(\alpha u_{n'}^+,u-u_{n'}^+)$, $u\in L^2(\Omega)$. Hence, \eqref{eq:kkt_o:3} is satisfied.
Due to the structure of the update rule we have
$$\lim_{n'\rightarrow\infty}R_{n'}^+ = \lim_{n'\rightarrow\infty}\norm{(y_{n'}^+-\psi)_+}_\CO + (\mu_{n'}^+,\psi-y_{n'}^+)_+ = 0.$$
Hence $y^*\leq\psi$ and consequently $(\mu^*,\psi-y^*)\geq 0$. Since $(\mu^*,\psi-y^*)_+ =0$ we get $(\mu^*,\psi-y^*)= 0$. Thus \eqref{eq:kkt_o:4} is satisfied.
We have proven that $(y^*,u^*,p^*,\mu^*)$ is a KKT point of \eqref{eq:ALS:optcontprob}, i.e., $(y^*,u^*,p^*,\mu^*)$ solves \eqref{eq:kkt_o}. It remains to show strong convergence of $u_{n'}^+\rightarrow u^*$ in $L^2(\Omega)$.
Testing \eqref{eq:kkt_o:3} with $u_{n'}^+$ and \eqref{AL3_varineq} with $u^*$ and adding both inequalities we get
\begin{align*}
(p^*-p_{n'}^++\alpha(u^*-u_{n'}^+),u_{n'}^+-u^*)\geq 0.
\end{align*}
Hence $$\alpha \norm{u_{n'}^+-u^*}_{L^2(\Omega)}^2\leq (p^*-p_{n'}^+,u_{n'}^+-u^*).$$
Since we already know that $p_{n'}^+\rightarrow p^*$ in $L^2(\Omega)$ and $u_{n'}^+\rightharpoonup u^*$ in $L^2(\Omega)$ this directly yields $u_{n'}^+\rightarrow u^*$ in $L^2(\Omega)$.
\end{proof}

\section{Convergence towards Local Solutions}\label{sec:con_local_sol}

So far, we have been able to show that a weak limit point that has been generated by Algorithm \ref{alg_detail} is a stationary point of the original problem \eqref{eq:ALS:optcontprob} if it satisfies the linearized Slater condition.
If a weak limit point satisfies a second-order condition, we gain convergence to a local solution. However the convergence result from Theorem \ref{theo:convKKT} yields convergence of a subsequence of $(u_n^+)_n$ only. Accordingly, during all other steps the algorithm might choose solutions of the KKT system \eqref{AL:optsys} that are far away from a desired local minimum $\bu$. 
Here the following questions arise:
\begin{itemize}
\item[1.] For every fixed $\mu$ does there exist a KKT point of the arising sub-problem that satisfies $\bu_k\in B_r(\bu)$?
\item[] and 
\item[2.] Is an infinite number of steps successful if the algorithm chooses these KKT points in step 1?
\end{itemize}
Indeed these questions can be answered positively. We will show in this section that for every fixed $\mu$  there exists a KKT point of the augmented Lagrange sub-problem such that for $\rho$ sufficiently large $\bu_k\in B_r(\bu)$. One should keep in mind, that also in this case there is no warranty that forces the algorithm to choose exactly these solutions. However, if the previous iterates are used in numerical computations as a starting point for the computation of the next iterate, the remaining iterates are likely located in $B_r(\bu)$. 
In order to reach this result we need the following assumption which is rather standard.
\begin{assumption}[\textbf{Quadratic growth condition (QGC)}]\label{ass:quadraticgrowthcondition}
Let $\bu\in \Uad$ be a control satisfying the first-order necessary optimality conditions \eqref{kkt_optsys}. We assume that there exist $\beta>0$ and $\epsilon>0$ such that the quadratic growth condition
\begin{align}\label{eq:quadgrowth}
f(u)\geq f(\bu)+\beta\norm{u-\bu}_{L^2(\Omega)}^2
\end{align}
is satisfied for all feasible $u\in \Uad$, $S(u)\leq\psi$ with $\norm{u-\bu}_{L^2(\Omega)} \leq\epsilon$. Hence, $\bu$ is a local solution in the sense of $L^2(\Omega)$ for problem \eqref{eq:ALS:optcontprob}.
\end{assumption}

Let us mention that the quadratic growth condition can be implied by some well known second-order sufficient condition (SSC). We refer the reader to Section \ref{sec:SSC_conv} for more  details.\medskip\newline
Our idea now is the following: In order to show that in every iteration of the algorithm there exists $\bu_k\in B_r(\bu)$ we want to estimate the error norm $\norm{\bu_k-\bu}_{L^2(\Omega)}^2$. Here we want to exploit the quadratic growth condition from Assumption \ref{ass:quadraticgrowthcondition}. However, this condition requires a control $u\in \Uad$ that is feasible for the original problem \eqref{eq:ALS:optcontprob}, which has explicit state constraints. Since the solutions of the augmented Lagrange sub-problems cannot be expected to be feasible for the original problem in general, we consider an auxiliary problem. Due to the special construction of this problem one can construct an auxiliary control that is feasible for the original problem \eqref{eq:ALS:optcontprob}. This idea has been presented in \cite{CasasTroeltzsch2002errorEstFinElementSemiLinOC} for a finite-element approximation as well as in \cite{KrumbiegelNeitzelRoesch2012regSemilinOptCont} for regularizing a semilinear elliptic optimal control problem with state constraints by applying a virtual control approach.

\subsection{Analysis of the Auxiliary Problem}

Let $\bu$ be a local solution of \eqref{eq:ALS:optcontprob} that satisfies the first-order necessary optimality conditions \eqref{kkt_optsys} of Theorem \ref{theo:KKT-AL} and the quadratic growth condition from Assumption \ref{ass:quadraticgrowthcondition}.
Following the idea from \cite{CasasTroeltzsch2002errorEstFinElementSemiLinOC,KrumbiegelNeitzelRoesch2012regSemilinOptCont} we consider the following auxiliary problem
\begin{align}
\underset{y^r_{\rho},u^r_{\rho}}{\min}\ &J_{AL}^r(y^r_{\rho},u^r_{\rho},\mu,\rho):=J(y^r_{\rho},u^r_\rho)+\frac{1}{2\rho}\int_{\Omega}\left(\left(\mu+\rho(y^r_{\rho}-\psi)\right)_+\right)^2 \dx
\label{auxprob_AL}\tag{${P_{AL}^r}$}
\end{align}
such that
\begin{align*}
 y^r_{\rho}=S(u^r_{\rho}),\qquad  u^r_{\rho}\in \Uad,\qquad \norm{u^r_{\rho}-\bu}_{L^2(\Omega)}\leq r.
\end{align*}
We choose $r$ small enough such that the quadratic growth condition from Assumption \ref{ass:quadraticgrowthcondition} is satisfied.
In the following we define the set of admissible controls of \eqref{auxprob_AL} by
\begin{align*}
\Uad^r:=\lbrace u\in \Uad\ |\ \norm{u-\bar{u}}_{L^2(\Omega)}\leq r\rbrace.
\end{align*}

The auxiliary problem admits at least one (global) solution. Moreover first-order necessary optimality conditions can be derived by standard arguments without any regularity assumption:

\begin{theorem}[\textbf{Existence of solution of the auxiliary problem}]
The auxiliary problem \eqref{auxprob_AL} admits a global solution $\bu^r_{\rho}\in\Uad^r$.
\end{theorem}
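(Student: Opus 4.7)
The plan is to apply the direct method of the calculus of variations to problem \eqref{auxprob_AL}. First I observe that the admissible set $\Uad^r$ is nonempty (it contains $\bu$ itself), bounded in $L^2(\Omega)$, and convex and closed (as the intersection of the convex closed set $\Uad$ with a closed ball in $L^2(\Omega)$); consequently it is weakly sequentially closed. The functional $J_{AL}^r$ is bounded below by zero, so a minimizing sequence $(u_n)_n\subset \Uad^r$ exists.

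Next I would extract a weakly convergent subsequence. Since $(u_n)_n$ is bounded in $L^2(\Omega)$, there exists a subsequence (not relabeled) and some $\bu_\rho^r\in L^2(\Omega)$ with $u_n\rightharpoonup \bu_\rho^r$ in $L^2(\Omega)$. By weak sequential closedness of $\Uad^r$, we have $\bu_\rho^r\in \Uad^r$. By Theorem \ref{theo:stateeq} the corresponding states $y_n=S(u_n)$ converge strongly in $H^1(\Omega)\cap C(\bar\Omega)$ to $\by_\rho^r:=S(\bu_\rho^r)$, so that the pair $(\by_\rho^r,\bu_\rho^r)$ is admissible for \eqref{auxprob_AL}.

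Finally I would verify weak lower semicontinuity of $J_{AL}^r$ along the minimizing sequence. The tracking term $\tfrac12\|y-y_d\|^2_{L^2(\Omega)}$ is continuous with respect to the strong convergence $y_n\to \by_\rho^r$ in $L^2(\Omega)$, and the control term $\tfrac{\alpha}{2}\|u\|^2_{L^2(\Omega)}$ is convex and continuous, hence weakly lower semicontinuous in $L^2(\Omega)$. For the penalty term, strong convergence $y_n\to \by_\rho^r$ in $C(\bar\Omega)$ together with the Lipschitz property of $t\mapsto t_+$ yields $(\mu+\rho(y_n-\psi))_+\to (\mu+\rho(\by_\rho^r-\psi))_+$ in $L^2(\Omega)$, so this term passes to the limit. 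Combining these facts,
\[
J_{AL}^r(\by_\rho^r,\bu_\rho^r,\mu,\rho)\le \liminf_{n\to\infty}J_{AL}^r(y_n,u_n,\mu,\rho)=\inf_{u\in \Uad^r}J_{AL}^r(S(u),u,\mu,\rho),
\]
which shows that $(\by_\rho^r,\bu_\rho^r)$ is a global minimizer.

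This argument is essentially routine; the only point deserving care is the continuity of the penalty term, which relies on $C(\bar\Omega)$-convergence of the states from Theorem \ref{theo:stateeq} combined with the nonexpansiveness of the pointwise $\max$-operation. No additional assumption beyond the standing ones is required, since the ball constraint $\|u-\bu\|_{L^2(\Omega)}\le r$ merely adds a weakly closed constraint to the already control-constrained problem.
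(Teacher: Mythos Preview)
Your argument is correct and is precisely the standard direct-method proof one would expect here; the paper itself does not spell out a proof but merely cites \cite[Theorem~5.1]{Reyes2015numPDEopt}, which relies on the same ingredients (boundedness and weak closedness of the feasible set, compactness of the control-to-state map from Theorem~\ref{theo:stateeq}, and weak lower semicontinuity of the objective). Nothing is missing.
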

\begin{proof}
Can be found in \cite[Theorem 5.1]{Reyes2015numPDEopt}.
\end{proof}

\begin{theorem}[\textbf{Necessary optimality conditions of the auxiliary problem}]
Let $\bu^r_{\rho}$ be a local optimal solution of \eqref{auxprob_AL} and $\by^r_{\rho}$ its associated state. Then, there exist a unique adjoint state $\bp^r_{\rho}\in H^1(\Omega)$ and a unique Lagrange multiplier $\bmu^r_{\rho}\in \MO$ such that they satisfy the following optimality system
\begin{subequations}\label{AL:aux_optsys}
\begin{equation} \label{AL:aux1_adjoint}
\begin{alignedat}{2}
A \by^r_{\rho}+d(\by^r_{\rho})=\bu^r_{\rho}&\quad &\text{in } \Omega,\\
\partial_{\nu_A} \by^r_{\rho}=0 &&\text{on } \Gamma,
\end{alignedat}
\end{equation}
\begin{equation}\label{AL:aux2_pde}
\begin{alignedat}{2}
A^*\bp^r_{\rho} + d_y(\by^r_{\rho})\bp^r_{\rho}&= \by^r_{\rho}-y_d +\bmu^r_{\rho}  &\quad &\text{in } \Omega,\\
\partial_{\nu_{A^*}} \by^r_{\rho}&=0 &&\text{on } \Gamma,
\end{alignedat}
\end{equation}
\begin{equation} \label{AL:aux3_varineq}
(\bp^r_{\rho} +\alpha \bu^r_{\rho},u-\bu^r_{\rho})\geq 0,\qquad \forall u\in \Uad^r,
\end{equation}
\begin{equation} \label{AL:aux5_complcond}
\bmu^r_{\rho}=\left(\mu+\rho(y^r_{\rho}-\psi)\right)_+.
\end{equation}
\end{subequations}
\end{theorem}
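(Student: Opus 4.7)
The plan is to treat \eqref{auxprob_AL} as a purely control-constrained problem on the convex set $\Uad^r$ and apply the standard reduction-plus-adjoint approach. The norm constraint $\norm{u-\bu}_{L^2(\Omega)}\leq r$ contributes no separate multiplier because it has already been absorbed as a convex constraint into $\Uad^r$; similarly, the pointwise state inequality $y\leq \psi$ is no longer explicit, having been moved into the objective through the augmented Lagrange term. Consequently no regularity (Slater) assumption on $\bu^r_\rho$ is required.

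First I would use the same reduced augmented Lagrange functional $f_{AL}(\cdot,\mu,\rho)$ as in \eqref{prob_auglag} and rephrase \eqref{auxprob_AL} as the minimization of this functional over the convex set $\Uad^r\subset L^2(\Omega)$. By Theorem \ref{theo:stateeq_diff}, the control-to-state map $S$ is $C^2$ from $L^2(\Omega)$ to $Y$, and since $s\mapsto s_+^2$ is continuously differentiable with derivative $2s_+$, the chain rule yields Fréchet differentiability of $f_{AL}(\cdot,\mu,\rho)$ on $L^2(\Omega)$ with
$$\langle f_{AL}'(u,\mu,\rho),h\rangle=(S(u)-y_d,S'(u)h)+\alpha(u,h)+\bigl((\mu+\rho(S(u)-\psi))_+,S'(u)h\bigr).$$
Since $\bu^r_\rho$ is a local minimizer over the convex set $\Uad^r$, the standard first-order condition on a convex set gives
$$\langle f_{AL}'(\bu^r_\rho,\mu,\rho),u-\bu^r_\rho\rangle\geq 0\qquad\forall u\in\Uad^r.$$

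Next, I would introduce $\bmu^r_\rho:=(\mu+\rho(\by^r_\rho-\psi))_+$, which is uniquely determined by \eqref{AL:aux5_complcond} and lies in $L^2(\Omega)\hookrightarrow\MO$ because $\by^r_\rho\in\CO$, $\mu\in L^2(\Omega)$ and $\psi\in\CO$. In particular the right-hand side of \eqref{AL:aux2_pde} belongs to $L^2(\Omega)$, so classical linear elliptic theory (for the Neumann problem; cf.\ Theorem \ref{theo:exsoladjointeq} for the weaker measure-valued right-hand side) provides a unique weak solution $\bp^r_\rho\in H^1(\Omega)$ of \eqref{AL:aux2_pde}, which by construction satisfies $S'(\bu^r_\rho)^*(\by^r_\rho-y_d+\bmu^r_\rho)=\bp^r_\rho$ in $L^2(\Omega)$. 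Substituting this identity into the variational inequality above transforms it into
$$(\bp^r_\rho+\alpha\bu^r_\rho,u-\bu^r_\rho)\geq 0\qquad\forall u\in\Uad^r,$$
which is \eqref{AL:aux3_varineq}. Uniqueness of $\bp^r_\rho$ follows from uniqueness in the linear adjoint equation, and uniqueness of $\bmu^r_\rho$ is immediate from its explicit definition once $\by^r_\rho=S(\bu^r_\rho)$ is fixed.

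The only mild technicality is verifying the derivative formula for $f_{AL}$; this is routine because $s\mapsto s_+^2$ is $C^1$ with derivative $2s_+$ even though $(\cdot)_+$ itself is only Lipschitz. Everything else is a textbook reduction of a control-constrained problem to a variational inequality coupled with an adjoint equation, as in \cite[Corollary 1.3, p.~73]{HinzePinnauUlbrich2009optimization}.
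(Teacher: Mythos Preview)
Your proposal is correct and follows essentially the same route the paper takes: the paper gives no explicit proof here, noting only that the first-order conditions ``can be derived by standard arguments without any regularity assumption,'' and for the analogous Theorem on the sub-problem \eqref{prob_auglag} it argues exactly as you do---cite \cite[Corollary~1.3, p.~73]{HinzePinnauUlbrich2009optimization} for the adjoint-based KKT system, observe that $\bmu^r_\rho$ is uniquely fixed by its explicit formula, and then use unique solvability of the linear adjoint equation (Theorem~\ref{theo:exsoladjointeq}) to conclude uniqueness of $\bp^r_\rho$. Your additional remarks on the convexity of $\Uad^r$ and the $C^1$-regularity of $s\mapsto s_+^2$ are precisely the details that make the ``standard arguments'' go through unchanged.
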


\subsection{Construction of a Feasible Control}

In this section we want to construct a control $u^{r,\delta}\in \Uad^r$ that is feasible for the original problem \eqref{eq:ALS:optcontprob}, i.e., $u^{r,\delta}\in\Uad$ and $S(u^{r,\delta})\leq \psi$.
Based on a Slater point assumption controls of this type have already been constructed in \cite{Meyer2008errorEstFEapproxContProbStateCont} for obtaining error estimates of finite element approximation of linear elliptic state constrained optimal control problems. In \cite{KrumbiegelNeitzelRoesch2012regSemilinOptCont} these techniques were combined with the idea of the auxiliary problem presented for nonlinear optimal control problems in \cite{CasasTroeltzsch2002errorEstFinElementSemiLinOC}.\medskip\newline
We follow the strategy from \cite{KrumbiegelNeitzelRoesch2012regSemilinOptCont}. This work applied the virtual control approach in order to solve \eqref{eq:ALS:optcontprob}. This means, that the state constraints are relaxed in a suitable way. To obtain optimality conditions for the corresponding auxiliary problem the authors showed that the linearized Slater condition of the original problem can be carried over to feasible controls of the auxiliary problem. This transferred linearized Slater condition is also the main ingredient for the construction of feasible controls of the original problem.
In our case, the state constraints have been removed from the set of explicit constraints by augmentation. Thus it is not necessary to establish a linearized Slater condition for the auxiliary problem in order to establish optimality conditions. However the Slater-type inequality that is deduced in the following lemma is still needed for our analysis, see Lemma \ref{lemma:auxcontrolfeasible}.

\begin{lemma}\label{lemma:linslater}
Let $\bu$ satisfy Assumption \ref{ass:slater} with $\sigma>0$ and associated linearized Slater point $\hat{u}$. Let
$$\hat{u}^r := \bu + t(\hat{u}-\bu),\qquad t:=\frac{r}{\max(r,\norm{\hat{u}-\bu}_{L^2(\Omega)})},\qquad \sigma_r:=t\sigma.$$
Then, it holds $\norm{\hat{u}^r-\bu}_{L^2(\Omega)}\leq r$.
%
%
Moreover, let $\bu^r_{\rho}\in \Uad^r$ be an admissible control of \eqref{auxprob_AL}. Then, for  $r>0$ sufficiently small  $\bu^r_{\rho}$ satisfies the following inequality
\begin{align}\label{eq:linSlaterAuxProb}
S(\bu^r_{\rho})+S'(\bu^r_{\rho})(\hat{u}^r-\bu^r_{\rho})\leq \psi - \frac{\sigma_r}{2}.
\end{align}
\end{lemma}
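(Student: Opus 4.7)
The first assertion is immediate from the definition: $\hat u^r - \bu = t(\hat u - \bu)$, and since $t\,\norm{\hat u - \bu}_{L^2(\Omega)} = r\,\norm{\hat u - \bu}_{L^2(\Omega)}/\max(r,\norm{\hat u - \bu}_{L^2(\Omega)}) \le r$, we get $\norm{\hat u^r - \bu}_{L^2(\Omega)} \le r$. Convexity of $\Uad$ together with $t \in [0,1]$ also guarantees $\hat u^r \in \Uad$, so $\hat u^r \in \Uad^r$.

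For the Slater-type estimate \eqref{eq:linSlaterAuxProb} my plan is a two-step argument. First, I would establish a shrunk Slater inequality \emph{at $\bu$ itself}: using linearity of $S'(\bu)$, the original linearized Slater condition from Assumption \ref{ass:slater}, $S(\bu) \le \psi$, and $\sigma_r = t\sigma$, one chains
\begin{equation*}
S(\bu) + S'(\bu)(\hat u^r - \bu) = S(\bu) + t\,S'(\bu)(\hat u - \bu) \le (1-t)S(\bu) + t(\psi - \sigma) \le \psi - \sigma_r.
\end{equation*}

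Second, I would transfer this from $\bu$ to $\bu_\rho^r$ through the algebraic decomposition
\begin{align*}
S(\bu_\rho^r) + S'(\bu_\rho^r)(\hat u^r - \bu_\rho^r) &- \bigl[S(\bu) + S'(\bu)(\hat u^r - \bu)\bigr] \\
&= \bigl[S(\bu_\rho^r) - S(\bu) - S'(\bu)(\bu_\rho^r - \bu)\bigr] + \bigl[S'(\bu_\rho^r) - S'(\bu)\bigr](\hat u^r - \bu_\rho^r),
\end{align*}
and bound both summands in the $C(\bar\Omega)$-norm. The first summand is the Taylor remainder of $S$ at $\bu$; by the twice continuous Fr\'echet differentiability of $S\colon L^2(\Omega)\to\Y$ guaranteed by Theorem \ref{theo:stateeq_diff}, it is of order $\norm{\bu_\rho^r - \bu}_{L^2(\Omega)}^2 \le r^2$. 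For the second, $C^2$-regularity of $S$ yields local Lipschitz continuity of $S'$, producing an operator-norm bound of order $r$ which is then applied to a direction of $L^2$-norm at most $\norm{\hat u^r - \bu}_{L^2(\Omega)} + \norm{\bu_\rho^r - \bu}_{L^2(\Omega)} \le 2r$, again giving an $O(r^2)$ term. Consequently there exists $C>0$ (depending on $\bu$ and $\norm{\hat u - \bu}_{L^2(\Omega)}$) with
\begin{equation*}
S(\bu_\rho^r) + S'(\bu_\rho^r)(\hat u^r - \bu_\rho^r) \le \psi - \sigma_r + Cr^2.
\end{equation*}

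To conclude, observe that $\sigma_r \ge r\sigma/\max(r,\norm{\hat u - \bu}_{L^2(\Omega)})$, so $\sigma_r/r$ stays bounded away from zero as $r \to 0$; choosing $r$ small enough that $Cr^2 \le \sigma_r/2$ absorbs the remainder and yields \eqref{eq:linSlaterAuxProb}. The main obstacle I anticipate is securing the \emph{quadratic} order of both remainder terms rather than merely linear order: a term-by-term first-order estimate would only produce $O(r)$, which could not be absorbed into $\sigma_r$, itself linear in $r$ in the non-degenerate case $\norm{\hat u - \bu}_{L^2(\Omega)} > r$. Exploiting simultaneously the Taylor remainder for $S$ and the local Lipschitz continuity of $S'$ granted by Theorem \ref{theo:stateeq_diff} is therefore essential to close the argument.
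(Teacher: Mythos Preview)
Your argument is correct and, in fact, cleaner than the paper's own proof. Both start by establishing $S(\bu)+S'(\bu)(\hat u^r-\bu)\le\psi-\sigma_r$ via the convex-combination trick, but differ in how the passage from $\bu$ to $\bu^r_\rho$ is handled. The paper decomposes the perturbation into \emph{three} separate remainders,
\[
S(\bu^r_\rho)-S(\bu),\qquad S'(\bu)(\bu-\bu^r_\rho),\qquad \bigl[S'(\bu^r_\rho)-S'(\bu)\bigr](\hat u^r-\bu^r_\rho),
\]
and asserts each is bounded by $\sigma_r/6$ for $r$ small. The last term is $O(r^2)$, but the first two are individually only $O(r)$; since $\sigma_r$ itself is linear in $r$ in the generic case $\norm{\hat u-\bu}_{L^2(\Omega)}>r$, bounding them separately by $\sigma_r/6$ cannot be forced by shrinking $r$. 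Your regrouping of those two terms into the single Taylor remainder $S(\bu^r_\rho)-S(\bu)-S'(\bu)(\bu^r_\rho-\bu)$, which is genuinely $O(r^2)$ by Theorem~\ref{theo:stateeq_diff}, is exactly the repair the paper's decomposition needs---and your closing remark about why mere first-order bounds would not suffice shows you anticipated the issue.
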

\begin{proof}
By definition of $\hat{u}^r$ and $t$ it holds $\norm{\hat{u}^r-\bu}_{L^2(\Omega)}\leq r$. Inserting the definition of $\hat{u}^r$ we get
\begin{align*}
S(\bu) +S'(\bu)(\hat{u}^r-\bu) & = S(\bu) + tS'(\bu)(\hat{u}-\bu)\\
&=(1-t)S(\bu) + t \left( S(\bu)+ S'(\bu)(\hat{u}-\bu)\right)\\
&\leq \psi -t\sigma =: \psi -\sigma_r.
\end{align*}
Hence, $\hat{u}^r$ is a linearized Slater point of the original problem \eqref{eq:ALS:optcontprob} in the neighborhood of $\bu$. We have $\norm{\hat{u}^r-\bu}\leq r, \norm{\bu-\bu^r_\rho}\leq r$ and hence $\norm{\hat{u}^r-\bu^r_{\rho}}\leq 2r$. Since $S$ and $S'$ are Lipschitz we obtain (if $r$ sufficiently small) $\norm{S(\bu^r_{\rho})-S(\bu)}_{\CO}\leq \sigma_r/6$,  $\norm{S'(\bu)(\bu-\bu^r_{\rho})}_\CO\leq \sigma_r/6$ and
$\norm{(S'(\bu^r_{\rho})-S'(\bu))(\hat{u}^r -\bu^r_{\rho}) }_\CO\leq \sigma_r/6$ . Hence,
\begin{align*}
S(\bu^r_{\rho})  + S'(\bu^r_{\rho})(\hat{u}^r-\bu^r_{\rho}) &=  S(\bu) + S'(\bu)(\hat{u}^r - \bu)\\
&\quad + S(\bu^r_{\rho})-S(\bu)\\
&\quad +(S'(\bu^r_{\rho})-S'(\bu))(\hat{u}^r -\bu^r_{\rho}) + S'(\bu)(\bu-\bu^r_{\rho})\\
&\leq \psi -\frac{\sigma_r}{2}.
\end{align*}
Thus, $\hat{u}^r$ satisfies \eqref{eq:linSlaterAuxProb} and the proof is done.\qedhere
\end{proof}

In the following lemma we will construct feasible controls for \eqref{eq:ALS:optcontprob} to be used in the sequel for our convergence analysis.
The construction of an admissible control $u^{r,\delta}\in\Uad^r$ that is also feasible for \eqref{eq:ALS:optcontprob} is based on the fact that $\bu^r_{\rho}$ satisfies Lemma \ref{lemma:linslater}.\medskip\newline
We define the maximal violation of $\bu^r_{\rho}$ with respect to the state constraints $\by^r_\rho\leq \psi$ by
\begin{align}\label{eq:def_max_constvio}
d[\bu^r_{\rho},(P)]:=\norm{(\by^r_{\rho}-\psi)_+}_\CO,
\end{align}
where $\by^r_{\rho}=S(\bu^r_{\rho})$.

\begin{lemma}\label{lemma:auxcontrolfeasible}
Let all assumptions from Lemma \ref{lemma:linslater} be satisfied and define $\delta_\rho\in(0,1)$ via
$$\delta_{\rho}:=\frac{d[\bu^r_{\rho},(P)]}{d[\bu^r_{\rho},(P)]+\frac{\sigma_r}{4}}.$$
Then, for every $\rho>0$ and $r>0$ small enough the auxiliary control
$$u^{r,\delta}:=\bu^r_{\rho}+\delta(\hat{u}^r-\bu^r_{\rho})$$
is feasible for the original problem \eqref{eq:ALS:optcontprob}, i.e., $S(u^{r,\delta})\leq \psi$ for all $\delta\in [0,\delta_{\rho}]$.
\end{lemma}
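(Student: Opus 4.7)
The plan is to use a Taylor expansion of $S$ around $\bar{u}^r_\rho$ together with the convex combination structure of $u^{r,\delta}$. First I would invoke Theorem \ref{theo:stateeq_diff} to write
\[
S(u^{r,\delta}) = S(\bar{u}^r_\rho) + \delta S'(\bar{u}^r_\rho)(\hat{u}^r-\bar{u}^r_\rho) + R_\delta,
\]
where the twice continuous Fr\'echet differentiability of $S\colon L^2(\Omega)\to C(\bar\Omega)$, combined with uniform boundedness of $S''$ on the bounded set $U_{\text{ad}}^r$, yields a remainder estimate of the form $\|R_\delta\|_{C(\bar\Omega)}\le C\,\delta^2\|\hat{u}^r-\bar{u}^r_\rho\|_{L^2}^2\le 4Cr^2\delta^2$ with a constant $C$ independent of $\rho$ and $\delta$.

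Next I would rewrite the affine part as the convex combination
\[
S(\bar{u}^r_\rho) + \delta S'(\bar{u}^r_\rho)(\hat{u}^r-\bar{u}^r_\rho)
= (1-\delta)\,S(\bar{u}^r_\rho) + \delta\bigl[S(\bar{u}^r_\rho)+S'(\bar{u}^r_\rho)(\hat{u}^r-\bar{u}^r_\rho)\bigr].
\]
The elementary bound $S(\bar{u}^r_\rho)\le \psi + d[\bar{u}^r_\rho,(P)]$ from the definition \eqref{eq:def_max_constvio} of the maximal constraint violation handles the first bracket, while the Slater-type inequality \eqref{eq:linSlaterAuxProb} from Lemma \ref{lemma:linslater} handles the second, giving
\[
S(u^{r,\delta}) \le \psi + (1-\delta)\,d[\bar{u}^r_\rho,(P)] - \delta\,\frac{\sigma_r}{2} + 4Cr^2\delta^2.
\]

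A direct algebraic computation, using $\delta_\rho = d[\bar{u}^r_\rho,(P)]/\bigl(d[\bar{u}^r_\rho,(P)]+\sigma_r/4\bigr)$, shows
\[
(1-\delta_\rho)\,d[\bar{u}^r_\rho,(P)] - \delta_\rho\,\frac{\sigma_r}{2}
= -\,\frac{d[\bar{u}^r_\rho,(P)]\,\sigma_r/4}{d[\bar{u}^r_\rho,(P)] + \sigma_r/4},
\]
so the affine contribution is strictly negative at the critical value and, by monotonicity of the affine map $\delta\mapsto (1-\delta)d - \delta\sigma_r/2$, remains negative on the entire interval of interest. Feasibility of $u^{r,\delta}$ then follows once the quadratic remainder is absorbed into this negative margin.

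The main obstacle will be that absorption step: the negative margin is essentially of order $\min(d,\sigma_r)$ while the remainder is of order $r^2\delta^2$. Since $\sigma_r = t\sigma$ with $t=r/\max(r,\|\hat{u}-\bar{u}\|_U)$ scales linearly in $r$ for small $r$, one must show the algebraic inequality
\[
16\,C\,r^2\,d[\bar{u}^r_\rho,(P)] \le \sigma_r\bigl(d[\bar{u}^r_\rho,(P)] + \sigma_r/4\bigr),
\]
uniformly in $\rho$. This is where the smallness requirement on $r$ enters: for $r$ chosen small enough (depending only on $\sigma$, on $\|\hat{u}-\bar{u}\|_U$, and on the uniform bound $C$ on $S''$), the inequality above is satisfied for every admissible $d[\bar{u}^r_\rho,(P)]\ge 0$, so $S(u^{r,\delta})\le\psi$ holds throughout the stated range of $\delta$.
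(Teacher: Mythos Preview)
Your overall strategy—Taylor expansion, convex-combination splitting, then invoking the constraint violation bound and the Slater inequality \eqref{eq:linSlaterAuxProb}—is exactly the route taken in the paper (which cites \cite[Lemma 7]{KrumbiegelNeitzelRoesch2012regSemilinOptCont}). The paper handles the remainder more cheaply than you do: since $\delta\in[0,1]$ one has $\delta^2\le\delta$, so the second-order term is bounded by $\delta\,C r^2$ rather than $\delta^2\,4Cr^2$. This keeps the whole expression \emph{affine} in $\delta$,
\[
S(u^{r,\delta})-\psi \le (1-\delta)\,d[\bar u^r_\rho,(P)] + \delta\bigl(-\tfrac{\sigma_r}{2}+Cr^2\bigr),
\]
and the smallness condition on $r$ becomes simply $Cr^2\le\sigma_r/4$, after which $\delta_\rho$ is precisely the zero of the affine map $(1-\delta)d-\delta\sigma_r/4$. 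Your endpoint absorption inequality $16Cr^2 d \le \sigma_r(d+\sigma_r/4)$ is correct but unnecessarily elaborate.

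There is, however, a genuine gap in your monotonicity step. The affine map $\delta\mapsto (1-\delta)\,d-\delta\,\sigma_r/2$ is \emph{decreasing}; it equals $d\ge 0$ at $\delta=0$ and is negative at $\delta=\delta_\rho$. Hence its negativity at $\delta_\rho$ propagates to $\delta\ge\delta_\rho$, not to $\delta\le\delta_\rho$. In fact the statement as written cannot hold on all of $[0,\delta_\rho]$: whenever $d[\bar u^r_\rho,(P)]>0$ the choice $\delta=0$ gives $u^{r,0}=\bar u^r_\rho$, which is infeasible by definition. The paper's (commented) argument actually concludes feasibility for $\delta\ge\delta_\rho$, and in the subsequent estimates only the single value $\delta=\delta_\rho$ is ever used. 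So the correct target is feasibility for $\delta\in[\delta_\rho,1]$ (equivalently, at $\delta=\delta_\rho$), and your monotonicity sentence should be reversed accordingly.
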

\begin{proof}
Applying \eqref{eq:linSlaterAuxProb} the proof follows the argumentation from \cite[Lemma 7]{KrumbiegelNeitzelRoesch2012regSemilinOptCont}.
\end{proof}

The error between the auxiliary control $u^{r,\delta}$ and the global solution $\bu^r_{\rho}$ of \eqref{auxprob_AL} is bounded by the maximal constraint violation.

\begin{lemma}\label{lemma:est:uk-udelta}
The constructed feasible control $u^{r,\delta}$ from Lemma \ref{lemma:auxcontrolfeasible} satisfies the estimate
$$\norm{\bu^r_{\rho}-u^{r,\delta}}_{L^2(\Omega)} \leq c d[\bu^r_{\rho},(P)].$$
\end{lemma}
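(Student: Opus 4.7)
The plan is to simply exploit the explicit formula $u^{r,\delta}=\bu^r_{\rho}+\delta(\hat{u}^r-\bu^r_{\rho})$ together with the bound on $\delta$ supplied by Lemma \ref{lemma:auxcontrolfeasible}. From the definition we immediately read off
\[
\bu^r_{\rho}-u^{r,\delta}=-\delta(\hat{u}^r-\bu^r_{\rho}),
\]
so that $\norm{\bu^r_{\rho}-u^{r,\delta}}_{L^2(\Omega)}=\delta\,\norm{\hat{u}^r-\bu^r_{\rho}}_{L^2(\Omega)}$. Two ingredients remain: estimate the factor $\delta$, and estimate $\norm{\hat{u}^r-\bu^r_{\rho}}_{L^2(\Omega)}$.

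For the second factor, the triangle inequality together with $\norm{\hat u^r-\bu}_{L^2(\Omega)}\le r$ (by construction in Lemma \ref{lemma:linslater}) and $\norm{\bu-\bu^r_{\rho}}_{L^2(\Omega)}\le r$ (since $\bu^r_\rho\in\Uad^r$) yields $\norm{\hat{u}^r-\bu^r_{\rho}}_{L^2(\Omega)}\le 2r$, which is a bounded quantity independent of $\rho$.

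For the first factor, by the admissible range $\delta\in[0,\delta_\rho]$ and the definition of $\delta_\rho$ in Lemma \ref{lemma:auxcontrolfeasible} we have
\[
\delta\le \delta_\rho=\frac{d[\bu^r_{\rho},(P)]}{d[\bu^r_{\rho},(P)]+\tfrac{\sigma_r}{4}}\le \frac{4}{\sigma_r}\,d[\bu^r_{\rho},(P)].
\]
Combining both bounds gives
\[
\norm{\bu^r_{\rho}-u^{r,\delta}}_{L^2(\Omega)}\le \frac{8r}{\sigma_r}\,d[\bu^r_{\rho},(P)],
\]
and the claim follows with $c:=8r/\sigma_r$, a constant depending only on the fixed radius $r$ and the Slater margin $\sigma_r$ (both independent of $\rho$). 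There is no genuine obstacle here—once the construction of $u^{r,\delta}$ and the explicit formula for $\delta_\rho$ are in hand, the statement is essentially a two-line consequence; the only point requiring mild care is recording that $c$ is genuinely $\rho$-independent so that the estimate is useful for the subsequent convergence analysis.
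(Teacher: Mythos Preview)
Your proof is correct and follows essentially the same route as the paper: use $u^{r,\delta}=\bu^r_\rho+\delta(\hat u^r-\bu^r_\rho)$, bound $\delta\le\delta_\rho\le 4d[\bu^r_\rho,(P)]/\sigma_r$, and bound $\norm{\hat u^r-\bu^r_\rho}_{L^2(\Omega)}\le 2r$. The paper additionally unwinds the definition $\sigma_r=t\sigma$ to express the constant as $8\max(r,\norm{\hat u-\bu}_{L^2(\Omega)})/\sigma$, making explicit that it stays bounded as $r\to0$, but your version with $c=8r/\sigma_r$ is already sufficient for the stated claim.
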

\begin{proof}
We estimate $\delta_{\rho} $ from Lemma \ref{lemma:auxcontrolfeasible} by
\begin{align*}
\delta_{\rho} =\frac{d[\bu^r_{\rho},(P)]}{d[\bu^r_{\rho},(P)]+\frac{\sigma_r}{4}}\leq 4\frac{d[\bu^r_{\rho},(P)]}{\sigma_r}.
\end{align*}
Together with $\norm{\hat{u}^r-\bu^r_{\rho}}_{L^2(\Omega)}\leq 2r$ and the definition of $\sigma_r$ from Lemma \ref{lemma:linslater} as well as $\delta\in [0,\delta_{\rho}]$ we arrive at
\begin{align*}
\begin{split}
\norm{\bu^r_{\rho}-u^{r,\delta}}_{L^2(\Omega)} &=\norm{\bu^r_{\rho}-(\bu^r_{\rho}+\delta(\hat{u}^r-\bu^r_{\rho}))}_{L^2(\Omega)}=\norm{\delta(
\hat{u}^r-\bu^r_{\rho})}_{L^2(\Omega)}\\
&\leq \norm{\delta_{\rho}(\hat{u}^r-\bu^r_{\rho})}_{L^2(\Omega)} \leq 8r\frac{d[\bu^r_{\rho},(P)]}{\sigma_r} \\
&\leq 8 \frac{\max\lbrace r,\norm{\hat{u}^r-\bar{u}}_{L^2(\Omega)}\rbrace}{\sigma}d[\bu^r_{\rho},(P)] \leq c d[\bu^r_{\rho},(P)]
\end{split}
\end{align*}
and the proof is done.
\end{proof}

Finally we are able to apply the quadratic growth condition from Assumption \ref{ass:quadraticgrowthcondition}.

\begin{lemma}\label{lemma:erroryu}
Let $\bu$ be a local solution of \eqref{eq:ALS:optcontprob} that satisfies the quadratic growth condition from Assumption \ref{ass:quadraticgrowthcondition} and the linearized Slater condition from Assumption \ref{ass:slater}. Consider a fixed $\mu\in L^2(\Omega)$ and $r>0$ sufficiently small such that the quadratic growth condition is satisfied. If $\bu^r_{\rho}$ is a global solution of the auxiliary problem \eqref{auxprob_AL} then it holds
\begin{align}
\begin{split}
\beta\norm{\bu^r_{\rho}-\bu}^2_{L^2(\Omega)} +\frac{1}{2\rho}\norm{\bmu^r_{\rho}}_{L^2(\Omega)}^2 \leq c \norm{(\by^r_{\rho}-\psi)_+}_\CO +\frac{1}{2\rho}\norm{\mu}_{L^2(\Omega)}^2.
\end{split}
\label{eq:lemma:erroryu}
\end{align}
\end{lemma}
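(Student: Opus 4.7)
The plan is to exploit the global optimality of $\bu^r_{\rho}$ for the auxiliary problem \eqref{auxprob_AL} and the quadratic growth condition at $\bu$. Since $\bu^r_{\rho}$ itself need not be feasible for \eqref{eq:ALS:optcontprob}, the QGC cannot be applied at $\bu^r_{\rho}$ directly; instead the argument is routed through the feasible perturbation $u^{r,\delta}$ constructed in Lemma \ref{lemma:auxcontrolfeasible}, with Lemma \ref{lemma:est:uk-udelta} controlling $\norm{\bu^r_{\rho}-u^{r,\delta}}_{\U}$ by the constraint violation $d[\bu^r_{\rho},(P)]=\norm{(\by^r_\rho-\psi)_+}_\CO$.

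\textbf{Key steps.} First, since $\bu\in\Uad^r$ trivially, the global optimality of $\bu^r_\rho$ yields $J_{AL}^r(\by^r_\rho,\bu^r_\rho,\mu,\rho)\le J_{AL}^r(\by,\bu,\mu,\rho)$. Because $\by=S(\bu)\leq\psi$ and $\mu\geq 0$, the pointwise bound $(\mu+\rho(\by-\psi))_+\leq\mu$ controls the penalty at $\bu$ by $\frac{1}{2\rho}\norm{\mu}^2_{\U}$, giving
\[
f(\bu^r_\rho)+\frac{1}{2\rho}\norm{\bmu^r_\rho}^2_{\U}\leq f(\bu)+\frac{1}{2\rho}\norm{\mu}^2_{\U}.
\]
Second, the QGC applied at $u^{r,\delta}$ (which is feasible for \eqref{eq:ALS:optcontprob} by Lemma \ref{lemma:auxcontrolfeasible} and lies in the $\epsilon$-ball for $r$ small) yields $f(\bu)\leq f(u^{r,\delta})-\beta\norm{u^{r,\delta}-\bu}^2_\U$. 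I would then use local Lipschitz continuity of $f$ (inherited from the Lipschitz property of $S$ in \eqref{eq:S-Lipschitz} and the quadratic structure of $J$) to replace $f(u^{r,\delta})$ by $f(\bu^r_\rho)+L\norm{u^{r,\delta}-\bu^r_\rho}_\U$, and a Young-type inequality
\[
\norm{u^{r,\delta}-\bu}^2_\U\geq \tfrac{1}{2}\norm{\bu^r_\rho-\bu}^2_\U-\norm{u^{r,\delta}-\bu^r_\rho}^2_\U
\]
to recover $\norm{\bu^r_\rho-\bu}^2$ on the left; Lemma \ref{lemma:est:uk-udelta} then converts both correction terms into powers of $d[\bu^r_\rho,(P)]$.

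\textbf{Finish and obstacle.} Combining the two inequalities gives the claim up to a $d[\bu^r_\rho,(P)]^2$ term on the right. This is absorbed using that $(\by^r_\rho-\psi)_+$ is uniformly bounded in $\CO$ (since $\bu^r_\rho\in\Uad^r$ is $L^2$-bounded and $S\colon\U\to\CO$ is continuous by Theorem \ref{theo:stateeq}), so $d[\bu^r_\rho,(P)]^2\leq M\,d[\bu^r_\rho,(P)]$; collecting constants and relabelling the resulting $\beta/2$ as $\beta$ produces \eqref{eq:lemma:erroryu}. The main difficulty is precisely the inapplicability of the QGC to the (possibly infeasible) iterate $\bu^r_\rho$: one must pay for the detour through $u^{r,\delta}$ without losing the quadratic order in $\norm{\bu^r_\rho-\bu}_\U$, which is the role of the Young splitting together with the constraint-violation bound of Lemma \ref{lemma:est:uk-udelta}.
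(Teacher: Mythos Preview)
Your argument is correct and follows the same route as the paper: compare $\bu^r_\rho$ with $\bu$ via the global optimality of $\bu^r_\rho$ in \eqref{auxprob_AL}, apply the QGC at the feasible surrogate $u^{r,\delta}$ from Lemma \ref{lemma:auxcontrolfeasible}, and bridge $u^{r,\delta}$ back to $\bu^r_\rho$ using local Lipschitz continuity of $f$ together with Lemma \ref{lemma:est:uk-udelta}.

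There is one small discrepancy worth flagging. In handling the splitting of $\norm{u^{r,\delta}-\bu}^2$, the paper does \emph{not} use your Young-type inequality. Instead it writes
\[
\norm{u^{r,\delta}-\bu}^2 \ge \norm{\bu^r_\rho-\bu}^2 - 2\bigl|(u^{r,\delta}-\bu^r_\rho,\bu^r_\rho-\bu)\bigr|
\]
and bounds the cross term linearly via $\norm{\bu^r_\rho-\bu}\le r$, obtaining $2\beta r\,\norm{u^{r,\delta}-\bu^r_\rho}\le c\,d[\bu^r_\rho,(P)]$. This preserves the coefficient $\beta$ exactly as stated in \eqref{eq:lemma:erroryu}, and it produces only a first-order term in $d[\bu^r_\rho,(P)]$, so no separate absorption of a $d^2$ contribution is needed. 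Your Young splitting yields only $\beta/2$ on the left; the remark ``relabelling the resulting $\beta/2$ as $\beta$'' is not legitimate, since $\beta$ is the fixed constant from Assumption~\ref{ass:quadraticgrowthcondition}. The weakened inequality with $\beta/2$ would still suffice for every subsequent use (Theorems~\ref{theo:convergence} and~\ref{theo:Ass3-auxprob}), but if you want \eqref{eq:lemma:erroryu} as written, replace the Young step by the cross-term bound using $\norm{\bu^r_\rho-\bu}\le r$.
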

\begin{proof}
As has been shown in Lemma \ref{lemma:auxcontrolfeasible} $u^{r,\delta}$ is feasible for \eqref{eq:ALS:optcontprob}. We insert the special choice $u=u^{r,\delta}$ in the quadratic growth condition \eqref{eq:quadgrowth} and get
\begin{align}
f(u^{r,\delta})&\geq f(\bu)+\beta\norm{u^{r,\delta}-\bu}_{L^2(\Omega)}^2 \notag\\
&=f(\bu)+\beta\norm{u^{r,\delta}-\bu^r_{\rho}+\bu^r_{\rho}-\bu}_{L^2(\Omega)}^2 \notag\\
&\geq f(\bu)+\beta\left(\norm{u^{r,\delta}-\bu^r_{\rho}}_{L^2(\Omega)}^2-2|(u^{r,\delta}-\bu^r_{\rho},\bu^r_{\rho}-\bu)|+\norm{\bu^r_{\rho}-\bu}_{L^2(\Omega)}^2\right) \notag\\
&\geq f(\bu)+\beta \norm{\bu^r_{\rho}-\bu}_{L^2(\Omega)}^2-c\norm{\bu^r_{\rho}-u^{r,\delta}}_{L^2(\Omega)},
\label{eq:proof:lemmaerroryu1}
\end{align}
where we exploited that $ \norm{\bu^r_{\rho}-\bu}_{L^2(\Omega)}^2\leq r^2$ and $\norm{\bu^r_{\rho}-u^{r,\delta}}_{L^2(\Omega)} $ is bounded by the maximal constraint violation (Lemma \ref{lemma:est:uk-udelta}).
Rearranging the terms of \eqref{eq:proof:lemmaerroryu1} and applying Lemma \ref{lemma:est:uk-udelta} we get
\begin{align*}
\beta \norm{\bu^r_{\rho}-\bu}_{L^2(\Omega)}^2&\leq f(u^{r,\delta} )-f(\bu)+c\norm{u^{r,\delta} -\bu^r_{\rho}}_{L^2(\Omega)}\\
&\leq f(u^{r,\delta} )-f(\bu^r_{\rho})+f(\bu^r_{\rho})-f(\bu) +cd[\bu^r_{\rho},(P)].
\end{align*}

We recall the definition of the reduced cost functional of the auxiliary problem \eqref{auxprob_AL}
$$f_r(\bu^r_{\rho}) :=f(\bu_\rho^r)+\frac{1}{2\rho}\norm{\bmu_\rho^r}^2_{L^2(\Omega)}, \quad \bmu_\rho^r=(\mu+\rho(S(\bu_\rho^r)-\psi))_+.$$

Exploiting the Lipschitz continuity of the solution operator $S$ for the estimate
$$| f(u^{r,\delta} )-f(\bu^r_{\rho})|\leq c\norm{u^{r,\delta}-\bu^r_{\rho}}_{L^2(\Omega)},$$
see \cite[Lemma 4.11]{Troeltzsch2010optimal} and exploiting the optimality of $\bu^r_{\rho}$ for \eqref{auxprob_AL} as well as applying the definition of the reduced cost functional and the feasibility of $\bu$ for the auxiliary problem, we get
\begin{align*}
\beta &\norm{\bu^r_{\rho}-\bu}_{L^2(\Omega)}^2\leq f(\bu^r_{\rho})-f(\bu) +cd[\bu^r_{\rho},(P)]\\
&\quad\leq f_r(\bu^r_{\rho})-f_r(\bu) -\frac{1}{2\rho}\norm{\bmu^r_{\rho}}_{L^2(\Omega)}^2+\frac{1}{2\rho}\norm{(\mu+\rho(S(\bu)-\psi))_+}_{L^2(\Omega)}^2+cd[\bu^r_{\rho},(P)]\\
&\quad\leq -\frac{1}{2\rho}\norm{\bmu^r_{\rho}}_{L^2(\Omega)}^2 +\frac{1}{2\rho}\norm{(\mu+\rho(S(\bu)-\psi))_+}_{L^2(\Omega)}^2 + cd[\bu^r_{\rho},(P)].
\end{align*}
Noting that it holds
\begin{align*}
\frac{1}{2\rho}\norm{(\mu+\rho(S(\bu)-\psi))_+}_{L^2(\Omega)}^2\leq \frac{1}{2\rho}\norm{\mu}_{L^2(\Omega)}^2
\end{align*}
we get with \eqref{eq:def_max_constvio}
\begin{align*}
\beta \norm{\bu^r_{\rho}-\bu}_{L^2(\Omega)}^2+\frac{1}{2\rho}\norm{\bmu^r_{\rho}}_{L^2(\Omega)}^2&\leq cd[\bu^r_{\rho},(P)] +\frac{1}{2\rho}\norm{\mu}_{L^2(\Omega)}^2 \\
&= c\norm{(\by^r_{\rho}-\psi)_+}_\CO +\frac{1}{2\rho}\norm{\mu}_{L^2(\Omega)}^2
\end{align*}
which yields the claim.
\end{proof}

\subsection{An Estimate of the Maximal Constraint Violation}

In this section we will derive an estimate on the maximal constraint violation. We recall an estimate from \cite[Lemma 4]{KrumbiegelRoesch2008regerrorNeumann}.

\begin{lemma}\label{lemma:AL-fCest}
Let $f\in C^{0,1}(\bar{\Omega})$ be given. Then, there exists a constant $c>0$ so that $f$ satisfies the estimate
$$\norm{f}_\CO \leq c\norm{f}_{L^2(\Omega)}^\frac{2}{2+N}.$$
\end{lemma}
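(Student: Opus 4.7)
The plan is to prove this via a direct pointwise/geometric argument based on the cone condition. Note that the stated constant $c$ must depend on the Lipschitz seminorm $L := [f]_{C^{0,1}(\bar\Omega)}$, since scaling $f \mapsto t f$ with $t\to\infty$ would otherwise contradict the inequality; I treat $L$ as absorbed into $c$ in line with the statement. First I would fix $x_0 \in \bar\Omega$ with $M := \norm{f}_\CO = |f(x_0)|$, which exists by continuity on the compact set $\bar\Omega$.

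The second step is to exploit Lipschitz continuity to produce a region where $|f|$ is still comparable to $M$. For every $x \in \bar\Omega$ we have $|f(x)| \geq M - L|x-x_0|$, so setting $r := M/(2L)$ gives $|f(x)| \geq M/2$ on $\omega_r := B(x_0,r)\cap\Omega$. The crucial geometric point is then a uniform lower bound $|\omega_r| \geq c_\Omega\, r^N$: because $\Omega$ is either $C^{1,1}$ or convex polygonal (Assumption \ref{ass:standing}), it satisfies a uniform interior cone condition, so the intersection of any small ball centred on $\bar\Omega$ with $\Omega$ has volume comparable to $r^N$ with a constant independent of $x_0$.

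Combining these ingredients in the $L^2$ norm yields
\begin{equation*}
\norm{f}_{L^2(\Omega)}^2 \;\geq\; \int_{\omega_r}|f|^2\dx \;\geq\; \left(\frac{M}{2}\right)^2 c_\Omega\, r^N \;=\; \frac{c_\Omega}{2^{N+2} L^N}\, M^{N+2},
\end{equation*}
so that solving for $M$ gives $\norm{f}_\CO = M \leq c\, \norm{f}_{L^2(\Omega)}^{2/(N+2)}$ with $c = c(L,\Omega)$, which is exactly the claim (for $r$ small enough that $B(x_0,r)$ does not exceed $\bar\Omega$; otherwise one trivially has $M \leq 2L\,\mathrm{diam}(\Omega)$, and the bound follows because $\norm{f}_{L^2(\Omega)}$ is then bounded below by a positive constant times $M$ on the whole of $\Omega$).

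The main obstacle is the geometric step $|\omega_r| \geq c_\Omega r^N$ uniform in $x_0$; this is standard but does rely on the boundary regularity assumed in Assumption \ref{ass:standing}. The remaining algebra is routine, and the argument is exactly the one carried out in \cite{KrumbiegelRoesch2008regerrorNeumann}, to which the lemma refers.
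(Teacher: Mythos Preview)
Your argument is correct and is exactly the standard proof underlying \cite[Lemma~4]{KrumbiegelRoesch2008regerrorNeumann}; the paper itself does not supply an independent proof but simply recalls the estimate from that reference. Your observation that the constant $c$ necessarily depends on the $C^{0,1}$ (semi)norm of $f$ is also the correct reading of the statement, and is precisely how the lemma is applied in Theorem~\ref{theo:AL-maxconstraintviol}, where the relevant functions are uniformly bounded in $C^{0,1}$.
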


\begin{theorem}\label{theo:AL-maxconstraintviol}
Let $\mu\in L^2(\Omega)$ be fixed. Further, let $\bu^r_{\rho}$ be the optimal control of the auxiliary problem \eqref{auxprob_AL}. Then, the maximal violation $d[\bu^{r}_{\rho},(P)]$ of $\bu^r_{\rho}$ with respect to \eqref{eq:ALS:optcontprob} can be estimated by
\begin{align*}
d[\bu^r_{\rho},(P)]\leq c\left( \frac{1}{\rho}\right)^{1/(2+N)}.
\end{align*}
\end{theorem}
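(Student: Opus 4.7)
The plan is to combine the energy-style estimate from Lemma~\ref{lemma:erroryu} with the pointwise lower bound on the multiplier $\bmu^r_\rho$ and the interpolation inequality of Lemma~\ref{lemma:AL-fCest}, and then solve the resulting algebraic inequality for $d[\bu^r_\rho,(P)]$.

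First, since $\mu\ge 0$ and $\bmu^r_\rho=(\mu+\rho(\by^r_\rho-\psi))_+$, one has pointwise a.e.\ the bound $\bmu^r_\rho \ge \rho(\by^r_\rho-\psi)_+$, whence
\[
\frac{1}{2\rho}\norm{\bmu^r_\rho}_{L^2(\Omega)}^2 \;\ge\; \frac{\rho}{2}\norm{(\by^r_\rho-\psi)_+}_{L^2(\Omega)}^2.
\]
Dropping the nonnegative term $\beta\norm{\bu^r_\rho-\bu}_{L^2(\Omega)}^2$ on the left-hand side of \eqref{eq:lemma:erroryu}, this yields
\[
\frac{\rho}{2}\norm{(\by^r_\rho-\psi)_+}_{L^2(\Omega)}^2 \;\le\; c\,\norm{(\by^r_\rho-\psi)_+}_{\CO} + \frac{1}{2\rho}\norm{\mu}_{L^2(\Omega)}^2.
\]

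Next, I would invoke Lemma~\ref{lemma:AL-fCest} applied to $f:=(\by^r_\rho-\psi)_+$, which is Lipschitz on $\bar\Omega$ with a constant uniform in $\rho$ (since $\bu^r_\rho\in\Uad\subset L^\infty(\Omega)$ yields uniform $W^{2,p}$-estimates on $\by^r_\rho$ by standard elliptic regularity, and the pointwise positive-part preserves Lipschitz regularity). Rearranging the interpolation estimate $\|f\|_{\CO}\le c\|f\|_{L^2(\Omega)}^{2/(2+N)}$ gives
\[
\norm{(\by^r_\rho-\psi)_+}_{L^2(\Omega)}^2 \;\ge\; c\,\norm{(\by^r_\rho-\psi)_+}_{\CO}^{N+2}.
\]
Writing $d:=d[\bu^r_\rho,(P)]=\norm{(\by^r_\rho-\psi)_+}_{\CO}$, combining the two displays produces the algebraic inequality
\[
\rho\, d^{\,N+2} \;\le\; c\,d + \frac{c}{\rho}.
\]

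Finally I would close the estimate by a case distinction. Either $cd \le c/\rho$, in which case $d\le c\rho^{-1}\le c\rho^{-1/(N+2)}$, or $cd > c/\rho$, in which case $\rho d^{N+2}\le 2cd$, so $d^{N+1}\le 2c/\rho$ and hence $d\le c\,\rho^{-1/(N+1)}\le c\,\rho^{-1/(N+2)}$. In either case we obtain the asserted bound $d[\bu^r_\rho,(P)]\le c\,(1/\rho)^{1/(2+N)}$.

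The main technical obstacle is the justification of the interpolation step: Lemma~\ref{lemma:AL-fCest} requires $(\by^r_\rho-\psi)_+$ to lie in $C^{0,1}(\bar\Omega)$ with a Lipschitz constant that does not blow up along the sequence $\rho\to\infty$. One therefore has to lean on the $L^\infty$-bound from $\Uad$ together with the elliptic regularity of the semilinear state equation from Assumption~\ref{ass:standing} to obtain a uniform bound in $W^{2,p}(\Omega)\hookrightarrow C^{0,1}(\bar\Omega)$ (for $p$ sufficiently large, depending on $N\in\{2,3\}$), which in turn provides the uniform Lipschitz control needed for the interpolation constant $c$ to be independent of $\rho$. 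The remaining algebra is routine.
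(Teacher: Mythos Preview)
Your argument is correct and relies on the same three ingredients as the paper's proof: Lemma~\ref{lemma:erroryu}, the interpolation inequality of Lemma~\ref{lemma:AL-fCest} applied to $(\by^r_\rho-\psi)_+$ (with the same $W^{2,q}\hookrightarrow C^{0,1}$ justification you sketch), and the pointwise bound $\bmu^r_\rho\ge\rho(\by^r_\rho-\psi)_+$. The difference lies only in how these are combined. The paper does not set up a self-consistent inequality for $d$; instead it observes directly that $\frac{1}{\rho}\|\bmu^r_\rho\|_{L^2(\Omega)}^2$ is uniformly bounded---simply because $\|(\by^r_\rho-\psi)_+\|_{\CO}$ is a priori bounded by Theorem~\ref{theo:stateeq} (from $\bu^r_\rho\in\Uad$), so the right-hand side of \eqref{eq:lemma:erroryu} is bounded independently of $\rho$. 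Then the factorisation
\[
\Bigl(\tfrac{1}{\rho}\|\bmu^r_\rho\|_{L^2(\Omega)}\Bigr)^{2/(2+N)}
=\Bigl(\tfrac{1}{\rho}\Bigr)^{1/(2+N)}\Bigl(\tfrac{1}{\rho}\|\bmu^r_\rho\|_{L^2(\Omega)}^2\Bigr)^{1/(2+N)}
\]
gives the rate in one line. Your route via the algebraic inequality $\rho d^{N+2}\le cd+c/\rho$ is slightly longer but has the merit that in the second branch of your case distinction it actually delivers the sharper rate $d\le c\rho^{-1/(N+1)}$, which you then relax to match the statement.
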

\begin{proof}
Since $\bu^r_{\rho}\in L^\infty(\Omega)$ we get with a regularity result \cite[Theorem 5]{KrumbiegelNeitzelRoesch2010moreauYosidaSemilinOptCont} that $\by^r_{\rho}\in W^{2,q}(\Omega)$ for all $1<q<\infty$. Due to the embedding $W^{2,q}(\Omega)\hookrightarrow C^{0,1}(\Omega)$ for $q>N$ we can apply Lemma \ref{lemma:AL-fCest} and get the following estimate
\begin{align*}
d[\bu^r_{\rho},(P)]&=\norm{(S(\bu^r_{\rho})-\psi)_+}_{\CO}\leq c\norm{(\by^r_{\rho}-\psi)_+}_{L^2(\Omega)}^{2/(2+N)}\\
&\leq  c\norm{\frac{1}{\rho}\left(\mu+\rho(\by^r_{\rho}-\psi)\right)_+}_{L^2(\Omega)}^{2/(2+N)}
 \leq c\left( \frac{1}{\rho}\norm{\bmu^r_{\rho}}_{L^2(\Omega)}\right)^{2/(2+N)}.
\end{align*}
Since $\mu\in L^2(\Omega)$ is fixed and $\norm{y^r_{\rho'}}_\CO\leq c\norm{u^r_{\rho'}}_{L^2(\Omega)}$ by Theorem \ref{theo:stateeq} we conclude with \eqref{eq:lemma:erroryu} from Lemma \ref{lemma:erroryu} that $\frac{1}{\rho}\norm{\bmu^r_{\rho}}^2$ is bounded.  Straight forward calculations yield
\begin{align*}
\left( \frac{1}{\rho}\norm{\bmu^r_{\rho}}_{L^2(\Omega)}\right)^{2/(2+N)}
&=\left( \frac{1}{\rho}\right)^{1/(2+N)}\left[ \frac{1}{\rho} \norm{\bmu^r_{\rho}}_{L^2(\Omega)}^{2}\right]^{1/(2+N)} \leq c\left( \frac{1}{\rho}\right)^{1/(2+N)}.
\end{align*}
Hence, we get the desired estimate.
\end{proof}

\subsection{Main Results}

We can now formulate our main results of this section.
\begin{theorem}
\label{theo:convergence}
Let $\bu$ be a local solution of \eqref{eq:ALS:optcontprob} with corresponding state $\by$ satisfying the QGC from Assumption \ref{ass:quadraticgrowthcondition} and the linearized Slater condition from Assumption \ref{ass:slater}. Let $\mu\in L^2(\Omega)$ be fix and let $(\by^r_{\rho},\bu^r_{\rho})$ denote the global solution of the auxiliary problem \eqref{auxprob_AL}.\smallskip\newline
Then, we have:
\begin{enumerate}
\item [a)] For every $r>0$ there is a $\bar\rho$ such that for all $\rho>\bar\rho$ it holds $\norm{\bu^r_{\rho}-\bu}_{\U}< r$.
\item [b)] The solutions $(\by^r_{\rho},\bu^r_{\rho})_\rho$ converge in $(\Y)\times L^2(\Omega)$ to $(\by,\bu)$ as $\rho\rightarrow\infty$ and we have the following convergence rates: Let $\gamma := \frac{1}{2(2+N)}$, then we have
\begin{align*}
\norm{\bu-\bu^r_{\rho}}_{L^2(\Omega)}&\leq \mathcal{O}\left(\frac{1}{{\rho}^\gamma}\right) \qquad \text{ and } \qquad \norm{\by-\by^r_{\rho}}_{\Y} \leq \mathcal{O}\left(\frac{1}{{\rho}^\gamma}\right).
\end{align*}
\item [c)] The points $\bu^r_{\rho}$ are local solutions of the augmented Lagrange sub-problem \probALk, provided that $\rho$ is sufficiently large.
\end{enumerate}

\end{theorem}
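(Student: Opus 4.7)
The proof plan rests on assembling the two preparatory results: the feasibility-driven error estimate (Lemma \ref{lemma:erroryu}) and the maximal constraint violation bound (Theorem \ref{theo:AL-maxconstraintviol}). Everything else is essentially bookkeeping.

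For parts (a) and (b), the plan is to plug the estimate $d[\bu^r_\rho,(P)] = \norm{(\by^r_\rho-\psi)_+}_\CO \le c(1/\rho)^{1/(2+N)}$ from Theorem \ref{theo:AL-maxconstraintviol} directly into inequality \eqref{eq:lemma:erroryu} of Lemma \ref{lemma:erroryu}. This yields
\[
\beta\norm{\bu^r_\rho-\bu}_{L^2(\Omega)}^2 + \frac{1}{2\rho}\norm{\bmu^r_\rho}_{L^2(\Omega)}^2 \le c\left(\frac{1}{\rho}\right)^{1/(2+N)} + \frac{1}{2\rho}\norm{\mu}_{L^2(\Omega)}^2.
\]
Since $N\in\{2,3\}$, the exponent $1/(2+N)<1$, so the first term on the right dominates the $1/\rho$ term as $\rho\to\infty$, and both terms vanish. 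Taking square roots gives
\[
\norm{\bu^r_\rho-\bu}_{L^2(\Omega)} \le c\left(\frac{1}{\rho}\right)^{1/(2(2+N))} = \mathcal{O}\!\left(\rho^{-\gamma}\right),
\]
which is the asserted rate in (b); the corresponding estimate on $\norm{\by-\by^r_\rho}_\Y$ follows immediately from the Lipschitz estimate \eqref{eq:S-Lipschitz} of the control-to-state map $S$. Part (a) is then just the observation that, for $\rho$ sufficiently large, the right-hand side of the $L^2$-bound becomes smaller than~$r$.

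For part (c), the idea is that if the auxiliary ball constraint $\norm{u-\bu}_{L^2(\Omega)} \le r$ is strictly inactive at $\bu^r_\rho$, then $\bu^r_\rho$ must already be a local minimizer of the unconstrained (in this sense) sub-problem \probALk. Concretely, by part (a) we can pick $\bar\rho$ so that $\norm{\bu^r_\rho-\bu}_{L^2(\Omega)} < r$ for all $\rho>\bar\rho$. Choosing $\eta := r - \norm{\bu^r_\rho-\bu}_{L^2(\Omega)} > 0$, the triangle inequality shows that every $u\in \Uad$ with $\norm{u-\bu^r_\rho}_{L^2(\Omega)} \le \eta$ satisfies $u\in\Uad^r$. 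Since $\bu^r_\rho$ is a global minimizer of \eqref{auxprob_AL}, this gives $f_{AL}(\bu^r_\rho,\mu,\rho) \le f_{AL}(u,\mu,\rho)$ for all such $u$, which is precisely the definition of a local solution of the augmented Lagrange sub-problem \probALk.

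I do not expect any serious obstacle: the heavy analytic work has been absorbed into Lemmas \ref{lemma:erroryu}--\ref{lemma:AL-fCest} and Theorem \ref{theo:AL-maxconstraintviol}. The only non-routine points to verify are (i) that the $(1/\rho)^{1/(2+N)}$ term genuinely dominates the $1/\rho$ term for large $\rho$ (true since $N\ge 1$), and (ii) that the inequality in part (a) is strict rather than just $\le r$, so that the ball constraint in \eqref{auxprob_AL} is \emph{strictly} inactive at $\bu^r_\rho$ and part (c) can be invoked; this is automatic by choosing $\rho$ large enough.
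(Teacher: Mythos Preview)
Your proposal is correct and follows essentially the same approach as the paper: combine Lemma~\ref{lemma:erroryu} with Theorem~\ref{theo:AL-maxconstraintviol} and the Lipschitz estimate~\eqref{eq:S-Lipschitz} for parts (a) and (b), and for (c) argue that the ball constraint in \eqref{auxprob_AL} is strictly inactive at $\bu^r_\rho$ so global optimality there becomes local optimality for \probALk. The only cosmetic difference is that the paper fixes the local radius as $r/2$ (after ensuring $\norm{\bu^r_\rho-\bu}_{L^2(\Omega)}\le r/2$ via part (b)) rather than your $\eta = r - \norm{\bu^r_\rho-\bu}_{L^2(\Omega)}$, but the argument is the same.
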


\begin{proof} 
a) + b) From Lemma \ref{lemma:erroryu}, the estimate of the maximal constraint violation from Theorem \ref{theo:AL-maxconstraintviol} and the Lipschitz continuity of the solution operator \eqref{eq:S-Lipschitz} we get the following error estimate
\begin{align*}
\norm{\by^r_{\rho}-\by}_{\Y}^2 +\norm{\bu^r_{\rho}-\bu}_{L^2(\Omega)}^2+\frac{1}{2\rho}\norm{\bmu^r_{\rho}}_{L^2(\Omega)}^2\leq c\left( \frac{1}{\rho}\right)^{1/(2+N)}+\frac{1}{2\rho}\norm{\mu}^2_{L^2(\Omega)}
\end{align*}
and we can conclude the existence of  $\bar\rho, r>0$ such that for all $\rho>\bar\rho$ we have $\norm{\bu^r_{\rho}-\bu}_{L^2(\Omega)}<r$. \smallskip\\
c) We have to show that
\begin{align*}
f_{AL}(u)\geq f_{AL}(\bu_\rho^r)\quad \forall u\in \Uad \text{ with } \norm{u-\bu_\rho^r}_{L^2(\Omega)}\leq \frac{r}{2}
\end{align*}
holds for a certain $r>0$. Since $\bu_\rho^r$ is the global solution of the auxiliary problem \eqref{auxprob_AL} we already know that there holds
\begin{align*}
f_{AL}(u)\geq f_{AL}(\bu_\rho^r)\quad \forall u\in \Uad \text{ with } \norm{u-\bu}_{L^2(\Omega)}\leq r.
\end{align*}
Let now $u\in \Uad$ such that $\norm{u-\bu_\rho^r}_{L^2(\Omega)}\leq \frac{r}{2}$. The triangle inequality yields
\begin{align*}
\norm{u-\bu}_{L^2(\Omega)}\leq \norm{u-\bu_\rho^r}_{L^2(\Omega)}+\norm{\bu_\rho^r-\bu}_{L^2(\Omega)}\leq \frac{r}{2}+\frac{r}{2}=r
\end{align*}
for $\rho$ sufficiently large. Here, we exploited statement b). Hence, $u\in \Uad^r$ where $f_{AL}(u)\geq f_{AL}(\bu_\rho^r)$ is satisfied. By definition we can conclude that $\bu_\rho^r$ is a local solution of \probALk.
\end{proof}

We can further prove that the algorithm makes infinitely many successful steps if $(\by_k,\bu_k)$ in step 1 of Algorithm \ref{alg_detail} are chosen as the global minimizers of the corresponding auxiliary problem.

\begin{theorem}\label{theo:Ass3-auxprob}
Assume that in step 1 of Algorithm \ref{alg_detail}
$(\by_k,\bu_k,\bp_k)$ is chosen as the global solution of the auxiliary problem \eqref{auxprob_AL} if it solves the optimality system of the augmented Lagrange sub-problem \eqref{AL:optsys}. Assume that only finitely many steps of Algorithm \ref{alg_detail} are successful. Then Assumption \ref{ass:termMuBounded} is satisfied.
\end{theorem}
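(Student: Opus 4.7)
The plan is to mimic the structure of Lemma \ref{lemma:Ass_mubounded_satisfied_globSol}, but now exploiting the estimate provided by Lemma \ref{lemma:erroryu} together with the decay of the maximal constraint violation from Theorem \ref{theo:AL-maxconstraintviol}. I would argue by contradiction: suppose the algorithm performs only finitely many successful steps, and let $m$ be the largest successful index. According to Algorithm \ref{alg_detail} we then have $\mu_k=\mu_m$ for all $k>m$, and $\rho_k\to\infty$.

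For such $k$, the iterate $(\bar y_k,\bar u_k,\bar p_k)$ is, by hypothesis, a global solution of the auxiliary problem \eqref{auxprob_AL} with parameters $\mu=\mu_m$ and $\rho=\rho_k$. I would first note that this selection is consistent with the algorithm's requirement that $(\bar y_k,\bar u_k,\bar p_k)$ solve the sub-problem KKT system \eqref{AL:optsys}: indeed, Theorem \ref{theo:convergence} c) guarantees that for $\rho_k$ sufficiently large any such global solution of the auxiliary problem is a local solution of \probALk\ (and hence automatically satisfies \eqref{AL:optsys}), so the choice is well-defined for all sufficiently large $k$.

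Next I would apply Lemma \ref{lemma:erroryu} with $\mu=\mu_m$ and $\rho=\rho_k$, which yields
\begin{align*}
\frac{1}{2\rho_k}\norm{\bar\mu_k}^2_{L^2(\Omega)}
\le c\,\norm{(\bar y_k-\psi)_+}_{\CO} + \frac{1}{2\rho_k}\norm{\mu_m}^2_{L^2(\Omega)}.
\end{align*}
Theorem \ref{theo:AL-maxconstraintviol} controls the first term on the right-hand side by $c\,\rho_k^{-1/(2+N)}$, which is bounded (in fact vanishing) as $\rho_k\to\infty$. The second term is evidently bounded since $\mu_m$ is fixed and $\rho_k\ge\rho_1>0$. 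Consequently $\rho_k^{-1}\norm{\bar\mu_k}^2_{L^2(\Omega)}$ stays bounded for $k>m$, and combining with the finite initial segment $k\le m$ gives a uniform bound, which is exactly Assumption \ref{ass:termMuBounded}.

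The only delicate point I anticipate is the well-definedness at every step: one must make sure that "choose the global minimizer of the auxiliary problem" is admissible throughout the tail of the sequence, because Theorem \ref{theo:convergence} c) requires $\rho_k$ large enough. Since under the contradiction hypothesis $\rho_k\to\infty$, this threshold is eventually exceeded, and for the finitely many earlier iterations boundedness is automatic. Everything else is a direct chaining of the estimates already proved in Lemma \ref{lemma:erroryu} and Theorem \ref{theo:AL-maxconstraintviol}.
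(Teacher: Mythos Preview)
Your proposal is correct and follows essentially the same route as the paper: fix the last successful index $m$ so that $\mu_k=\mu_m$ and $\rho_k\to\infty$, invoke Theorem \ref{theo:convergence} c) to ensure the auxiliary-problem minimizer is an admissible choice in step~1 for all large $k$, then chain Lemma \ref{lemma:erroryu} with Theorem \ref{theo:AL-maxconstraintviol} to bound $\rho_k^{-1}\norm{\bmu_k}_{L^2(\Omega)}^2$ uniformly. The only cosmetic remark is that the phrase ``argue by contradiction'' is superfluous---you never derive a contradiction but simply verify the bound directly, exactly as the paper does.
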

\begin{proof}
Let $m$ denote the largest index of a successful step. Hence $\mu_k=\mu_m$ for all $k>m$. The sequence $(\rho_k)_k$ is monotonically increasing. Exploiting Theorem \ref{theo:convergence} c) we can find an index $K>m$ such that for all $k>K$ the global solution $(\by_k,\bu_k)$ of the auxiliary problem is a KKT point of \eqref{AL:optsys}. Further due to Lemma \ref{lemma:erroryu} and Theorem \ref{theo:AL-maxconstraintviol} the following inequality is satisfied
\begin{align*}
\frac{1}{2\rho_{k}}\norm{\bmu_{k}}_{L^2(\Omega)}^2 &\leq c\norm{(\by_{k}-\psi)_+}_\CO +\frac{1}{2\rho_{k}}\norm{\mu_k}_{L^2(\Omega)}^2\\
 &\leq c\left(\frac{1}{\rho_{k}}\right)^{1/(2+N)}+\frac{1}{2\rho_{k}}\norm{\mu_m}_{L^2(\Omega)}^2\\
  &\leq c\left(\frac{1}{\rho_{1}}\right)^{1/(2+N)}+\frac{1}{2\rho_{1}}\norm{\mu_m}_{L^2(\Omega)}^2.
\end{align*}
Hence, Assumption \ref{ass:termMuBounded} is satisfied.
\end{proof}

We can conclude that the algorithm makes infinitely many successful steps. We omit the proof since it uses the same arguments as in Lemma \ref{lemma:globsol_infsuc}.

\begin{corollary}
Let all assumptions from Theorem \ref{theo:Ass3-auxprob} be satisfied. Then Algorithm \ref{alg_detail} makes infinitely many successful steps.
\end{corollary}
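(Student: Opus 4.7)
The plan is to argue by contradiction, exactly mirroring the structure of Lemma \ref{lemma:globsol_infsuc}. Suppose for contradiction that Algorithm \ref{alg_detail} makes only finitely many successful steps. Then there is a largest index $m$ of a successful step, and by the logic of the algorithm we have $\mu_k = \mu_m$ for all $k>m$, while the penalty parameter is increased at every subsequent iteration, so $\rho_k \to \infty$.

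Under this finite-success hypothesis I would invoke Theorem \ref{theo:Ass3-auxprob} directly: because the iterates $(\by_k,\bu_k,\bp_k)$ are chosen to be global solutions of the auxiliary problem \eqref{auxprob_AL} whenever those solutions also satisfy \eqref{AL:optsys}, the conclusion of Theorem \ref{theo:Ass3-auxprob} is that Assumption \ref{ass:termMuBounded} is fulfilled, that is, the sequence $\tfrac{1}{\rho_k}\|\bmu_k\|_{L^2(\Omega)}^2$ is uniformly bounded. The bound obtained there is actually quite explicit, of the form $c\rho_1^{-1/(2+N)} + \tfrac{1}{2\rho_1}\|\mu_m\|^2_{L^2(\Omega)}$, which is independent of $k$.

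Now I would apply Theorem \ref{theo:infsucstep}, whose statement is precisely that under Assumption \ref{ass:termMuBounded} the augmented Lagrange algorithm performs infinitely many successful steps. This immediately contradicts the assumption that only finitely many steps are successful, which proves the claim.

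There is essentially no technical obstacle here, since all of the heavy lifting has been done in Theorem \ref{theo:Ass3-auxprob} (which relies on the convergence rates from Theorem \ref{theo:convergence} and the estimates from Lemma \ref{lemma:erroryu} and Theorem \ref{theo:AL-maxconstraintviol}) and in Theorem \ref{theo:infsucstep} (which uses Lemmas \ref{lem:limsup_compl_mufixed} and \ref{lemma:convy_feasible}). The only point that deserves a brief mention is that the hypothesis of Theorem \ref{theo:Ass3-auxprob} — namely that the chosen $(\by_k,\bu_k)$ is a global minimizer of the auxiliary problem whose stationarity system coincides with \eqref{AL:optsys} — is available for all sufficiently large $k$ by Theorem \ref{theo:convergence}(c), since there $\bu_\rho^r$ is shown to be a local solution of \probALk\ once $\rho$ is large enough.
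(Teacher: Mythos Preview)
Your proposal is correct and follows exactly the approach indicated by the paper, which omits the proof with the remark that it uses the same arguments as Lemma \ref{lemma:globsol_infsuc}: assume finitely many successful steps, invoke Theorem \ref{theo:Ass3-auxprob} to obtain Assumption \ref{ass:termMuBounded}, and then derive a contradiction via Theorem \ref{theo:infsucstep}.
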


One has to keep in mind that the quadratic growth condition is only a local condition. Hence, the result of Theorem \ref{theo:convergence} is actually the best we can expect. In particular, the sub-problems \probALk may have solutions arbitrarily far from $\bu$ and we cannot exclude the possibility that these solutions are chosen in the sub-problem solution process from Algorithm \ref{alg_detail}.
However, one can prevent this kind of scenario by using the previous iterate $\bu_k$ as a starting point for the computation of $\bu_{k+1}$. In this way it is reasonable to expect that as soon as one of the iterates $\bu_k$ lies in $B_r(\bu)$ (with $r$ as above) and the penalty
parameter is sufficiently large, the remaining iterates will stay in $B_r(\bu)$ and converge to $\bu$.

\section{Second-Order Sufficient Conditions}\label{sec:SSC_conv}

We take up the quadratic growth condition from Assumption \ref{ass:quadraticgrowthcondition}. This condition is implied by a second-order sufficient condition, see \cite{CasasReyesTroeltzsch2008soscSemilinState}. We define the Lagrangian function
$$\underset{u\in \Uad}{\min} \mathcal{L}(u,\mu)=f(u)+\int_{\bar{\Omega}} \left(S(u)-\psi\right) \,\mathrm{d}\mu$$
where $y=S(u)$ 
and assume that for all $(\by,\bp,\bmu)$ satisfying the first-order necessary optimality conditions \eqref{kkt_optsys} to $\bu$
it holds
\begin{align}
\frac{\partial^2 \mathcal L}{\partial u^2}(\bu,\bmu)[h,h]\geq 0,\qquad \forall h \in C_{\bu}\backslash\lbrace0\rbrace,
\label{eq:SSC}
\end{align}
where $C_{\bu}$ denotes the cone of critical directions as defined in \cite{CasasReyesTroeltzsch2008soscSemilinState}. Since the solution operator $S$ (Theorem \ref{theo:stateeq_diff}) and the cost functional $J:L^2(\Omega)\rightarrow \R$ are of class $C^2$ (see\cite{CasasMateos2002soscSemiLinFiniteConstraints,CasasReyesTroeltzsch2008soscSemilinState}), inequality \eqref{eq:SSC} together with the first-order necessary conditions implies the quadratic growth condition from Assumption \ref{ass:quadraticgrowthcondition}, see \cite[Theorem 4.1, Remark 4.2]{CasasReyesTroeltzsch2008soscSemilinState} and \cite{Troeltzsch2010optimal}. Note, that the multiplier $\bmu$ does not need to be unique. That is why \eqref{eq:SSC} is imposed for every multiplier.\medskip\newline
Let us return to the convergence analysis of Algorithm \ref{alg_detail}. If in addition to the assumptions of Theorem \ref{theo:convKKT}, $u^*$ satisfies the QGC from Assumption \ref{ass:quadraticgrowthcondition}, then $u^*$ obviously is a local solution.\smallskip\newline
Second-order sufficient conditions not only allow us to prove convergence to a local solution but also to show local uniqueness of stationary points of the augmented Lagrange sub-problem. This is an important issue for numerical methods. In \cite{KrumbiegelNeitzelRoesch2010moreauYosidaSemilinOptCont} the authors proved that the Moreau-Yosida regularization without additional shift parameter is equivalent to the virtual control problem for a specific choice of therein appearing parameters. This equivalence can be transferred to the augmented Lagrange sub-problem \eqref{prob_auglag}.

\begin{remark}
Let $\bu\in \Uad$ be a control that satisfies the first-order necessary optimality conditions \eqref{kkt_optsys} and let $\bmu$ be the unique Lagrange multiplier w.r.t. the state constraints. We assume that there exists a constant $\delta>0$ such that
\begin{align}
\frac{\partial^2 \mathcal L}{\partial u^2}(\bu,\bmu)[h,h]\geq \delta\norm{h}^2_{L^2(\Omega)},\qquad \forall h\in {L^2(\Omega)}.
\label{eq:SSCstrong}
\end{align}

One can prove that the SSC \eqref{eq:SSCstrong} can be carried over to the augmented Lagrange sub-problems.
Let $\mu\in L^2( \Omega)$ and $\rho>0$ be fixed. Let $\bu_\rho\in \Uad$  be a control that satisfies $\bu_\rho\in B_r(\bu)$ and the first-order necessary optimality conditions \eqref{AL:optsys}. Let the SSC \eqref{eq:SSCstrong} be satisfied. Then, there exists a constant $\delta'>0$, which is independent of $\mu$ such that for all $h\in L^2(\Omega)$ the following condition
\begin{align*}
f''(\bu_\rho)h^2+((\mu+\rho(S(\bu_\rho)-\psi)_+,S''(\bu_\rho)h^2)\geq \delta'\norm{h}_{L^2(\Omega)}^2
\end{align*}
or equivalently
\begin{align*}
\int_{\Omega}(y^2_{h}-\bp_\rho d_{yy}(x,\by_\rho)y^2_{h}+\alpha h^2)\ \dx\geq \delta'\norm{h}_{L^2(\Omega)}^2
\end{align*}
is fulfilled for all $(h,y_{h})\in L^2(\Omega)\times H^1(\Omega)$ provided that $\rho$ is sufficiently large. Here, $y_{h}=S'(\bu_\rho)h$ and $\bp_\rho$ is the solution of the adjoint equation of the augmented Lagrange sub-problem. \newline\medskip
Moreover, then there exists a constant $\beta>0$ and $\gamma>0$ such that the quadratic growth condition
\begin{align*}
f_{AL}(u)\geq f_{AL}(\bu_\rho)+\beta\norm{u-\bu_\rho}^2_{L^2(\Omega)}
\end{align*}
holds for all $u\in\Uad$ with $\norm{u-\bu_\rho}_{L^2(\Omega)}\leq \gamma$ and $\bu_\rho$ is a local solution with corresponding state $\by_\rho$ of the augmented Lagrange sub-problem.
Here, Theorem 13 from \cite{KrumbiegelNeitzelRoesch2012regSemilinOptCont} yields the carried over version of the second-order condition for a virtual control problem. In \cite[Proposition 3]{KrumbiegelNeitzelRoesch2010moreauYosidaSemilinOptCont} it is proved that this condition implies a quadratic growth condition for the virtual control problem. Further, following the arguments as in \cite[Theorem 5]{KrumbiegelNeitzelRoesch2010moreauYosidaSemilinOptCont} this results can be adapted to the augmented Lagrange sub-problem.
\end{remark}

\section{Numerical Tests}\label{sec:NumExp}
In this section we report on numerical results for the solution of a semilinear elliptic pointwise state
constrained optimal control problem in two dimensions. All optimal control problems have been
solved using the above stated augmented Lagrange algorithm implemented with FEniCS \cite{LoggMardalEtAl2012a}
using the DOLFIN \cite{LoggWells2010a} Python interface. \smallskip\\
In every outer iteration of the augmented Lagrange algorithm the KKT system \eqref{AL:optsys} has to be solved for given $\mu$ and $\rho$. This is done by applying a semi-smooth Newton method.
We define the sets
\begin{align}
\begin{split}
\Aa_{\rho}:=\bigg\lbrace x\in \Omega \colon  -&\frac{1}{\alpha}\bp_\rho\leq u_a \bigg\rbrace,\qquad \Ab_{\rho}:= \left\lbrace x\in \Omega \colon -\frac{1}{\alpha}\bp_\rho\geq u_b\right\rbrace,\\
&\Yset_{\rho}:=\left\lbrace x\in \Omega \colon (\mu+\rho(\by_\rho-\psi))(x) > 0 \right\rbrace.
\end{split}
\label{eq:activesets}
\end{align}

Then system \eqref{AL:optsys} can be stated as
\begin{align}\label{eq:KKT_SSN_activeSets}
\begin{split}
A\by_{\rho}+d(\by_{\rho}) &= \bu_{\rho} \\
A^*\bar{p}_{\rho} + d_y(\by_{\rho})\bp_{\rho} & =  \bar{y}_{\rho}-y_d +\chi_{\Yset_{\rho}}\left(\mu+\rho(\by_{\rho}-\psi)\right)\\
\bu_\rho + (1-\chi_{\Aa_{\rho}}-\chi_{\Ab_{\rho}}) \frac{1}{\alpha}\bp_\rho &= \chi_{\Aa_{\rho}} u_a + \chi_{\Ab_{\rho}} u_b.
\end{split}
\end{align}

The semi-smooth Newton method for solving \eqref{AL:optsys} is given in Algorithm  \ref{alg:semi-smooth}.

\begin{algorithm}
    \caption{Semi-smooth Newton method for the augmented Lagrange sub-problem}
    \label{alg:semi-smooth}
    \begin{algorithmic}
     \item[1:] Set $k=0$, $\rho>0, \alpha>0$, set $\mu\in L^2(\Omega), y_d\in L^2(\Omega), \psi\in \CO$.\\
     Choose $(y_0,u_0,p_0)$ in $\Y\times\U\times H^1(\Omega)$\\
     \item[2:] \textbf{repeat}
        \item[3:] Set $\Aa_k, \Ab_k$and $\Yset_k$ as defined in \eqref{eq:activesets}
        \item[4:] Solve for $\delta_y,\delta_u,\delta_p$ by solving
\begin{align*}
G(y_k,u_k,p_k)(\delta_y,\delta_u,\delta_p) = - F(y_k,u_k,p_k)
\end{align*}
where
\begin{align*}
G(y_k,u_k,p_k) :=
\begin{pmatrix}
A+d_y(y_k) & -\mathrm{Id} & 0 \\
-(\mathrm{Id}+\chi_{\Yset_{k}}\rho\cdot \mathrm{Id}) +d_{yy}(y_k)p_k & 0 & A^*+d_y(y_k) \\
0 & \mathrm{Id} &  \frac{1}{\alpha}(1-\chi_{\Aa_{k}}-\chi_{\Ab_{k}})
\end{pmatrix}
\end{align*}
and
\begin{align*}
F(y_k,u_k,p_k) :=
\begin{pmatrix}
Ay_k+d(y_k) - u_k \\
A^*p_k+ d_y(y_k)p_k -  y_k + y_d - \chi_{\Yset_{k}}\left(\mu+\rho(y_k-\psi)\right) \\
u_k + (1-\chi_{\Aa_{k}}-\chi_{\Ab_{k}})\frac{1}{\alpha}p_k - \chi_{\Aa_{k}} u_a - \chi_{\Ab_{k}} u_b
\end{pmatrix}
\end{align*}
        \item[5:] Set $y_{k+1}=: y_k +\delta_y, u_{k+1}:= u_k+\delta_u$ and $p_{k+1}:= p_k+\delta_p,$
        \item[6:] Set $k:=k+1$.
        \item[7:] \textbf{until} a suitable stopping criterion is satisfied.
    \end{algorithmic}
\end{algorithm}

Since the linear parts of the system can be solved exactly we choose the error that arises during the linearization of the discretized system \eqref{eq:KKT_SSN_activeSets} as a stopping criterion. We terminate the semi-smooth Newton method as soon as 
$$\max( r_1, r_2 ,r_3) \leq 10^{-6},$$
where
\begin{small} 
\begin{align*}
r_1 &:= \norm{ d(y_{k})-\left(   d_y(y_{k-1})(y_{k}-y_{k-1})+d(y_{k-1})  \right)},\\
r_2 &:=\norm{d_y(y_{k}) - (d_y(y_{k-1})p_{k} + d_{yy}(y_{k-1})p_{k-1}(y_{k}-y_{k-1})) + (\chi_{\Yset_{k}}-\chi_{\Yset_{k-1}})(\mu+\rho(y_{k}-\psi)},\\
r_3 &:=\norm{u_k-P_{\Uad}\left(-\frac{1}{\alpha}p_k\right) }
\end{align*} 
\end{small}

is satisfied. In the following, $(y_h , u_h , p_h, \mu_h )$ denote the calculated
solutions after the stopping criterion is reached. We consider optimal control problems like
\begin{align*}
\min\ J(y,u):&=\frac{1}{2}||y-y_d||_{L^2(\Omega)}^2+\frac{\alpha}{2}||u||_{L^2(\Omega)}^2\\
\text{s.t.}\quad & y=Su,\qquad y\leq \psi,\qquad u\in \Uad
\end{align*}
where $\Omega = [0,1]\times [0,1]$. As not mentioned otherwise, we initialize  $(\by_0,\bu_0,\bp_0,\mu_1)$ equal to zero, the penalty parameter with $\rho_0:=0.5$ and choose the parameter in the decision concerning successful steps to be $\tau :=0.1$. If a step has not been successful, the penalization parameter is increased by the factor $\theta:=10$.
We stopped the algorithm as soon as
\[
R_n^+:=\norm{(y_n^+-\psi)_+}_{C(\bar{\Omega})} + ( \mu_n^+,\psi-y_n^+)_+ \leq  10^{-6}
\]
was satisfied. Since the stopping criterion from Algorithm \ref{alg:semi-smooth} yields $(y_h,u_h,p_h)$ that satisfies \eqref{eq:kkt_o:1}-\eqref{eq:kkt_o:3} with the desired accuracy this is a suitable stopping criterion.

\subsection*{Example 1}
Let us first consider an optimal control problem that is governed by the following partial differential equation
\begin{alignat*}{2}
-\Delta y +y+ \exp(y) &= u &\quad &\text{ in } \Omega,\\
\partial_{\nu} y &= 0 &&\text{ on } \Gamma.
\end{alignat*}
Clearly $d(y):=\exp(y)$ satisfies the required assumptions from Assumption \ref{ass:standing}. We set $$y_d(x) := 8\sin(\pi x_1)\sin(\pi x_2)-4,$$ $\psi(x):=1.0$ and $\Uad := \left\lbrace u\in L^\infty(\Omega) \colon -100\leq u(x)\leq 200\right\rbrace$. We choose $\alpha := 10^{-5}$. Figures \ref{fig:Ex5-100-y} and \ref{fig:Ex5-100-mu} illustrate the computed results for a degree of freedom of $10^4$.

\begin{figure}[H]
\begin{minipage}[h]{0.48\textwidth}
\includegraphics[width=\textwidth]{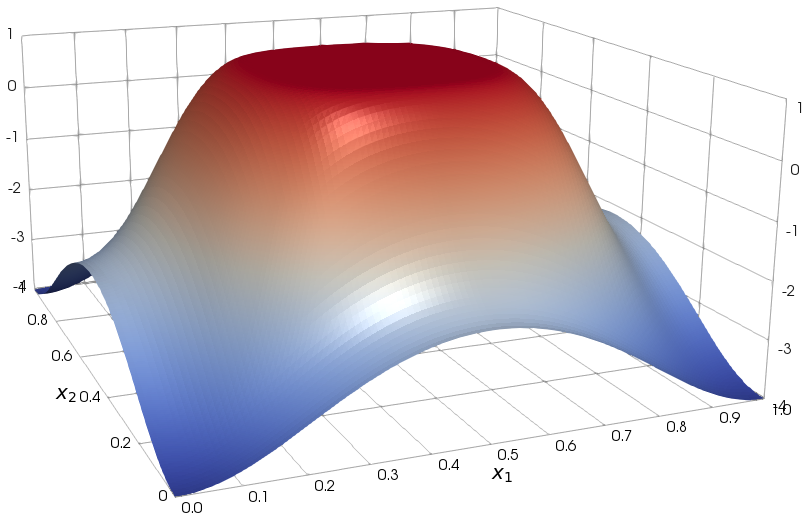}
\end{minipage}
\hfill
\begin{minipage}[h]{0.48\textwidth}
\includegraphics[width=\textwidth]{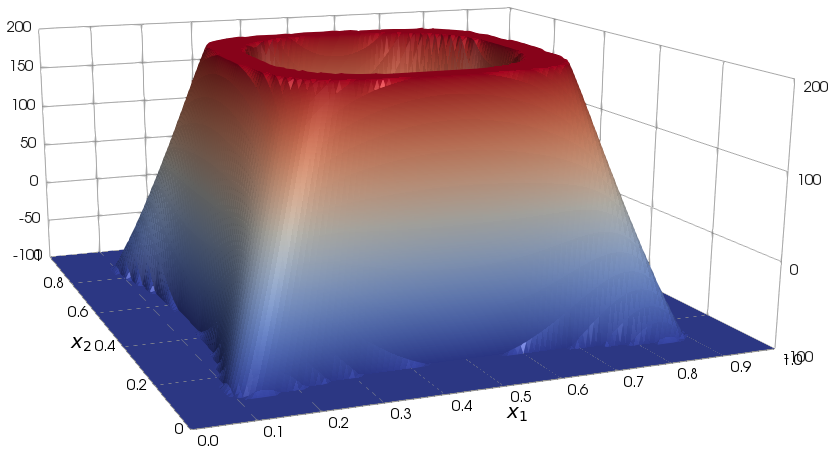}
\end{minipage}
\caption{(Example 1) Computed discrete optimal state $y_h$ (left) and optimal control $u_h$ (right)}
\label{fig:Ex5-100-y}
\end{figure}
\begin{figure}[H]
\begin{minipage}[b]{0.48\textwidth}
\includegraphics[width=\textwidth]{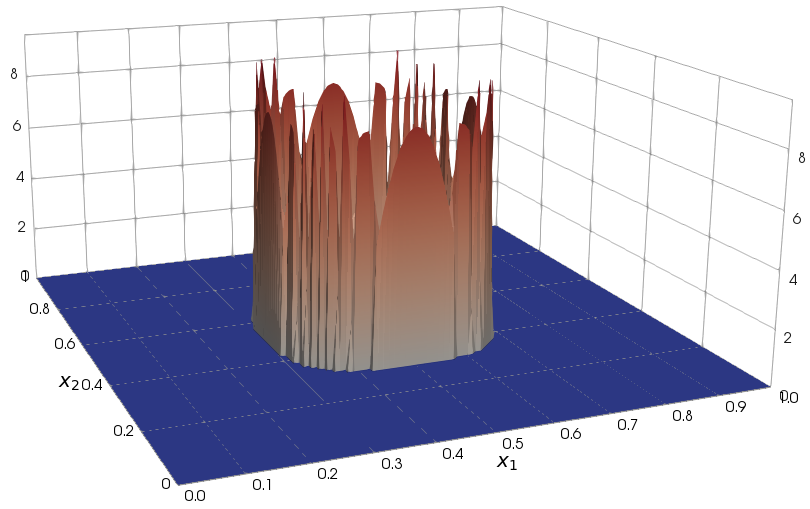}
\end{minipage}
\hfill
\begin{minipage}[b]{0.48\textwidth}
\includegraphics[width=\textwidth]{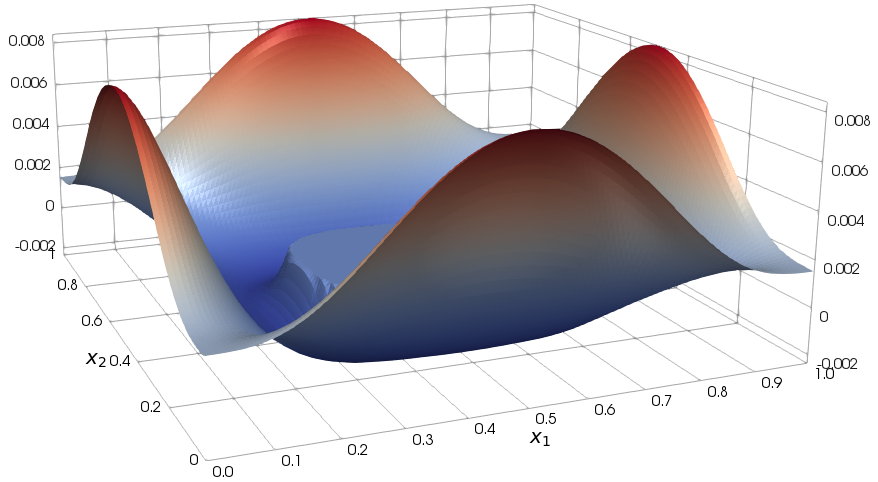}
\end{minipage}
\caption{(Example 1) Computed discrete multiplier $\mu_h$ (left) and the adjoint state $p_h$ (right)}
\label{fig:Ex5-100-mu}
\end{figure}

\subsection*{Example 2}
Next, we consider the following partial differential equation
\begin{alignat*}{2}
-\Delta y + y^3 &= u+f &\quad &\text{ in } \Omega,\\
\partial_{\nu} y &= 0 &&\text{ on } \Gamma
\end{alignat*}
and construct $(\by,\bu,\bp,\bmu)$ that satisfy the KKT system \eqref{eq:kkt_o}.
Let $\Omega:=B_2(0)$. We consider box constraints and set $u_a := -5$, $u_b := 5$. For clarity and to shorten our notation we set $r := r(x_1,x_2) := \sqrt{x_1^2 + x_2^2}$ and define the following functions
\begin{align*}
\bar y(x_1,x_2) &:= \begin{cases} 1 & \text{if } r < 1\\ 32 - 120 \cdot r + 180 \cdot r^2 - 130 \cdot r^3 + 45 \cdot r^4 - 6 \cdot r^5 & \text{if } r \geq 1 \end{cases},\\
\bar p(x_1,x_2) &:= 2\cos\left(\frac{3}{4}\pi x_1\right) \cos\left(\frac{3}{4}\pi x_2\right) \cdot \left( 1 - \frac{5}{4}r^3 + \frac{15}{16} r^4 - \frac{3}{16} r^5 \right),\\
\bar u(x_1,x_2) &:= P_{\Uad}\left(-\frac{1}{\alpha}\bp(x_1,x_2)\right),\\
\bar \mu(x_1,x_2) &:=   \begin{cases} \exp\left( - \frac{1}{1-r^2} \right)  & \text{if } r < 1\\ 0 & \text{if } r \geq 1 \end{cases},\\
\psi(x_1,x_2) &:= 1.
\end{align*}
Some calculation show that $\by, \bar p \in C^2(\bar \Omega)$ and $\bar \mu \in C(\bar \Omega)$. Furthermore $\partial_\nu \by = \partial_\nu \bp = 0$ on $\Gamma$. We now set
\begin{align*}
f(x_1,x_2) &:= - \Delta \bar y(x_1,x_2) +\by^3(x_1,x_2)- \bar u(x_1,x_2),\\
y_d(x_1,x_2) &:= \Delta \bar p(x_1,x_2) -3\by^2(x_1,x_2)\bp(x_1,x_2)+ \bar y(x_1,x_2) + \bar \mu(x_1,x_2).
\end{align*}

We start the algorithm with $\rho_0:=1$ and $\tau:= 0.5$. The Figures \ref{fig:Ex6-y} and \ref{fig:Ex6-mu} depict the computed result for a degree of freedom of $10^4$. Moreover, Figure \ref{fig:Ex2-erroryu} depicts the $L^2$-error of the computed solution $(y_h,u_h,p_h)$ to the constructed solution $(\by,\bu,\bp)$ in dependence of the degrees of freedom.

\begin{figure}[H]
\begin{minipage}[h]{0.48\textwidth}
\includegraphics[width=\textwidth]{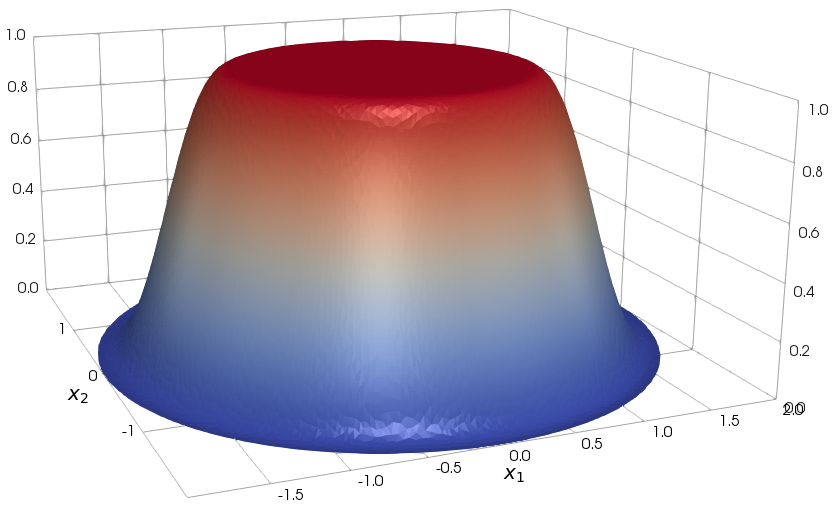}
\end{minipage}
\hfill
\begin{minipage}[h]{0.48\textwidth}
\includegraphics[width=\textwidth]{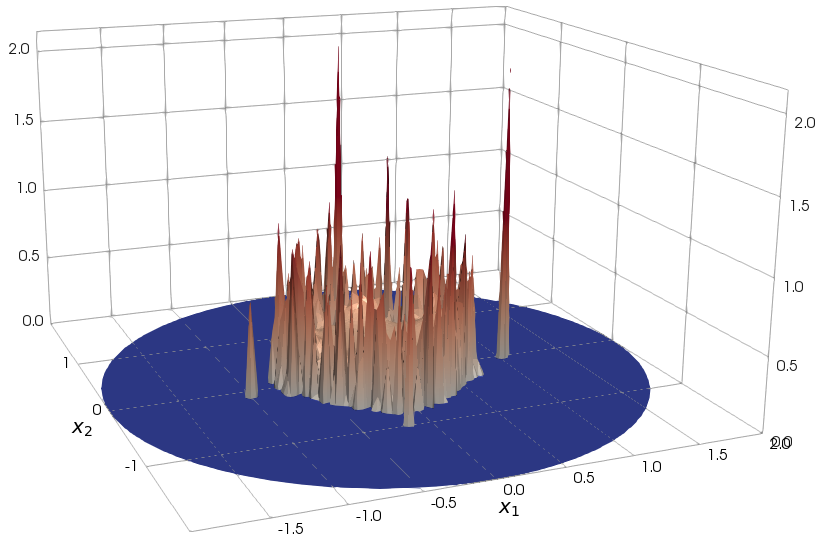}
\end{minipage}
\caption{(Example 2) Computed discrete optimal state $y_h$ (left) and multiplier $\mu_h$ (right)}
\label{fig:Ex6-y}
\end{figure}
\begin{figure}[H]
\begin{minipage}[b]{0.48\textwidth}
\includegraphics[width=\textwidth]{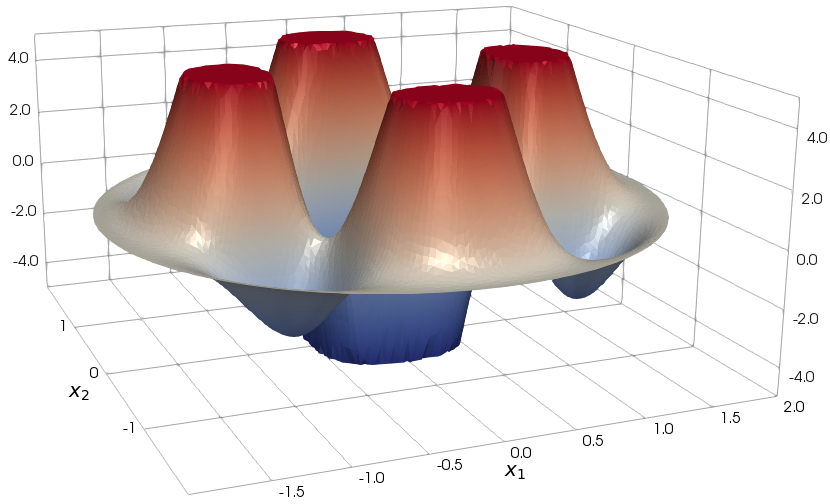}
\end{minipage}
\hfill
\begin{minipage}[b]{0.48\textwidth}
\includegraphics[width=\textwidth]{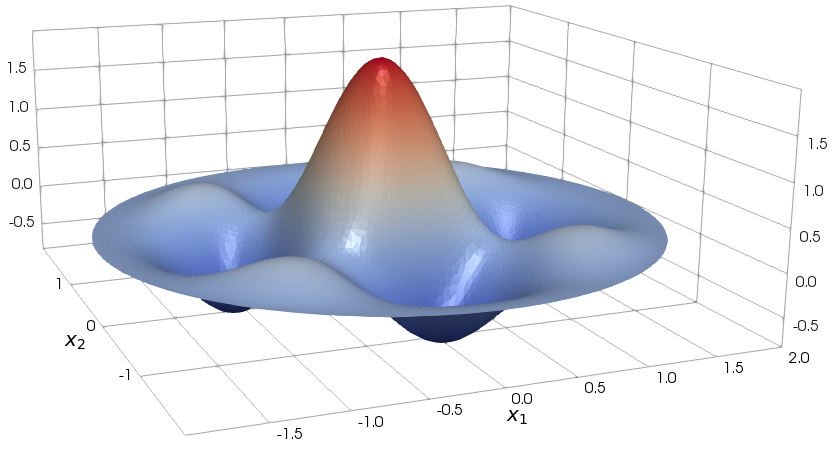}
\end{minipage}
\caption{(Example 2) Computed discrete optimal control $u_h$ (left) and the adjoint state $p_h$ (right)}
\label{fig:Ex6-mu}
\end{figure}

\begin{figure}[ht]
\begin{minipage}[b]{\textwidth}
\begin{figure}[H]
\centering
\newlength\figureheight
\newlength\figurewidth
\setlength\figureheight{5cm}
\setlength\figurewidth{8cm}
%
%
\begin{tikzpicture}

\begin{loglogaxis}[%
width=0.951\figurewidth,
height=\figureheight,
at={(0\figurewidth,0\figureheight)},
scale only axis,
xmin=10,
xmax=1000000,
xminorticks=true,
yminorticks=true,
xlabel={degrees of freedom},
ylabel={error},
ymin=0.0001,
ymax=10,
grid=major,
axis background/.style={fill=white},
legend entries={$\norm{y_h-\by}_{L^2(\Omega)}$, $\norm{u_h-\bu}_{L^2(\Omega)}$,$\norm{p_h-\bp}_{L^2(\Omega)}$},
    legend pos=north east,
]
\addplot [color=black,solid,line width=1.5pt,mark size=1.8pt,mark=square*,mark options={solid,fill=black}, ]
  table[row sep=crcr]{%
100	0.3518\\
1000	0.03363\\
10000	0.002573\\
100000	0.0002544\\
};

\addplot [color=gray,solid,line width=1.5pt,mark size=2.5pt,mark=triangle*,mark options={solid,fill=gray,draw=gray}]
plot coordinates {%
(100	,3.494)
(1000	,0.3989)
(10000	,0.03741)
(100000	,0.003692)
};

\addplot [color=blue,dashed,line width=1.5pt,mark size=2.0pt,mark=*,mark options={solid,fill=blue,draw=blue}]
  table[row sep=crcr]{%
100	0.3672\\
1000	0.04278\\
10000	0.004335\\
100000	0.001964\\
};

\end{loglogaxis}
\end{tikzpicture}%
\end{figure}
\end{minipage}
\caption{(Example 2): Errors $\norm{u_h-\bu}_{L^2(\Omega)},\norm{y_h-\by}_{L^2(\Omega)}$ and $\norm{p_h-\bp}_{L^2(\Omega)}$ vs. degrees of freedom.}
\label{fig:Ex2-erroryu}
\end{figure}

\subsection*{Example 3}

We adapt an example from \cite{KarlWachsmuth2017augmented} which can also be found in \cite{RoeschWachsmuth2012aposteriori} for state constraints given by $y\geq \psi$. In this case $\Omega:= [-1,2]\times [-1,2]$. This example does not include constraints on the control. The optimal control problem is governed by the semilinear partial differential equation
\begin{alignat*}{2}
-\Delta y + y^5 &= u+f &\quad &\text{ in } \Omega,\\
\partial_{\nu} y &= 0 &&\text{ on } \Gamma
\end{alignat*}
which satisfies Assumption \ref{ass:standing}. We set $r:= r(x_1,x_2):= \sqrt{x_1^2+x_2^2}$. The state constraint is given by $\psi(r) := -\frac{1}{2\pi\alpha}\left(\frac{1}{4}-\frac{r}{2}\right)$. Further, we have
\begin{align*}
\by(r) := -\frac{1}{2\pi\alpha}\chi_{r\leq 1}\left(\frac{r^2}{4}(\log r-2)+\frac{r^3}{4}+\frac{1}{4}\right),\qquad \bu(r) := \frac{1}{2\pi\alpha}\chi_{r\leq 1}(\log r +r^2-r^3),
\end{align*}
\[
\bp(r) := -\alpha \bu(r),\qquad \bmu(r) := \delta_0(r).
\]
It can be checked easily that $\by$ and $\bp$ satisfy the Neumann boundary. We consider the auxiliary functions
\begin{align*}
\tilde{y}_d(r) &:= \by(r)-\frac{1}{2\pi}\chi_{r\leq 1}(4-9r),\quad  \tilde{f}(r) := -\frac{1}{8\pi}\chi_{r\leq 1}(4-9r+4r^2-4r^3)
\end{align*}
and set
\begin{align*}
y_d(r) := \tilde{y}_d(r) - 5\by^4\bp, \quad f(r):= \tilde{f}(r)-\by^5.
\end{align*}
We start the algorithm with $\alpha:= 1.0$, $\rho_0:= 0.5$ and $\tau:= 0.3$. The computed results can be seen in Figures \ref{fig:Ex9-100-y} and \ref{fig:Ex9-100-mu}. The $L^2$-error of the computed solution $(y_h,u_h)$ to the constructed solution $(\by,\bu)$ in dependence of the degrees of freedom is shown in Figure \ref{fig:Ex3-erroryu}.

\begin{figure}[H]
\begin{minipage}[h]{0.48\textwidth}
\includegraphics[width=\textwidth]{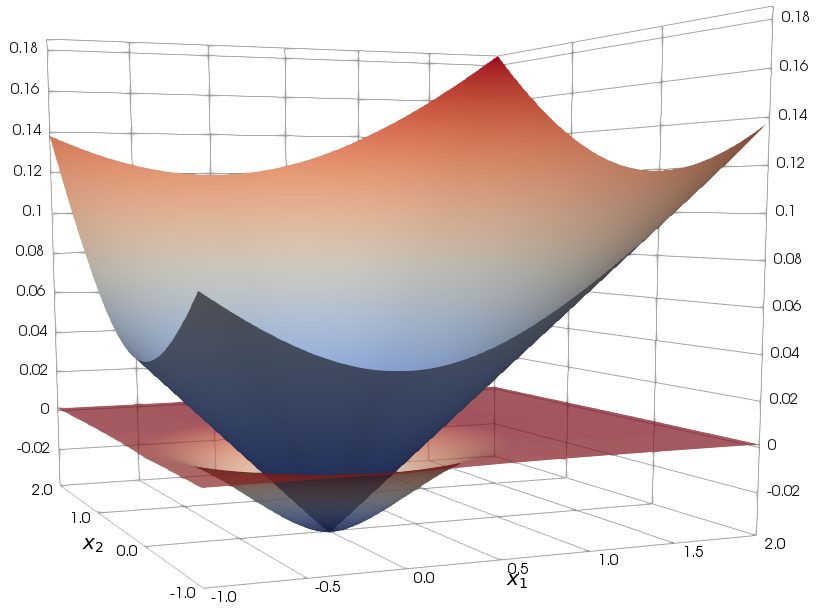}
\end{minipage}
\hfill
\begin{minipage}[h]{0.48\textwidth}
\includegraphics[width=\textwidth]{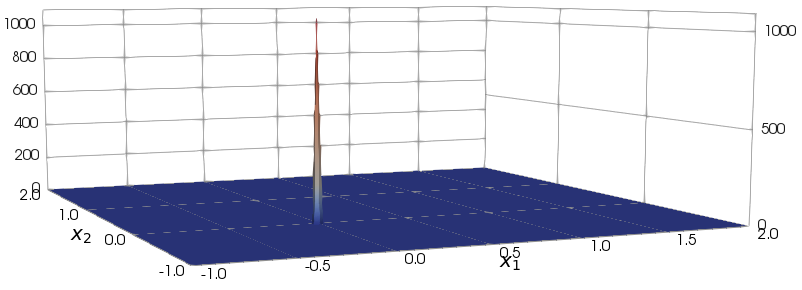}
\end{minipage}
\caption{(Example 3) Computed discrete optimal state $y_h$ with state constraint $\psi$ (left) and multiplier $\mu_h$ (right)}
\label{fig:Ex9-100-y}
\end{figure}
\begin{figure}[H]
\begin{minipage}[b]{0.48\textwidth}
\includegraphics[width=\textwidth]{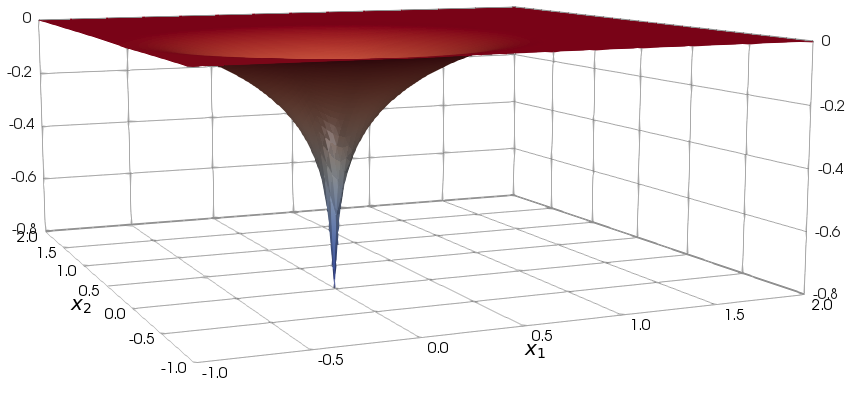}
\end{minipage}
\hfill
\begin{minipage}[b]{0.48\textwidth}
\includegraphics[width=\textwidth]{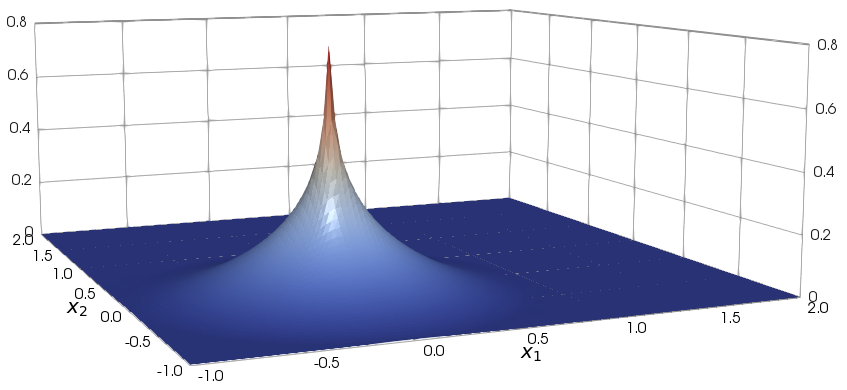}
\end{minipage}
\caption{(Example 3) Computed discrete optimal control $u_h$ (left) and the adjoint state $p_h$ (right)}
\label{fig:Ex9-100-mu}
\end{figure}

\begin{figure}[ht]
\begin{minipage}[b]{\textwidth}
\begin{figure}[H]
\centering
\setlength\figureheight{4cm}
\setlength\figurewidth{7cm}
%
%
\begin{tikzpicture}

\begin{axis}[%
width=0.951\figurewidth,
height=\figureheight,
at={(0\figurewidth,0\figureheight)},
scale only axis,
xmode=log,
xmin=10,
xmax=1000000,
xminorticks=true,
xlabel={degrees of freedom},
ymode=log,
ymin=0.001,
ymax=0.1,
yminorticks=true,
ylabel={error},
grid=major,
axis background/.style={fill=white},
legend entries={$\norm{y_h-\by}_{L^2(\Omega)}$, $\norm{u_h-\bu}_{L^2(\Omega)}$},
    legend pos=north east,
]
\addplot [color=black,solid,line width=1.5pt,mark size=1.8pt,mark=square*,mark options={solid,fill=black}]
  table[row sep=crcr]{%
100	0.02688\\
1000	0.01422\\
10000	0.003842\\
100000	0.001226\\
};

\addplot [color=gray,solid,line width=1.5pt,mark size=2.5pt,mark=triangle*,mark options={solid,fill=gray,draw=gray}]
  table[row sep=crcr]{%
100	0.04026\\
1000	0.02703\\
10000	0.009102\\
100000	0.003187\\
};

\end{axis}
\end{tikzpicture}%
\end{figure}
\end{minipage}
\caption{(Example 3): Errors $\norm{u_h-\bu}_{L^2(\Omega)}$ and $\norm{y_h-\by}_{L^2(\Omega)}$ vs. degrees of freedom.}
\label{fig:Ex3-erroryu}
\end{figure}

\subsection*{Penalization parameter and boundedness of the Lagrange multiplier}
Let us report about the the behaviour of the penalty parameter and the $L^1$-norm of the Lagrange multiplier.
Figure \ref{fig:L1andpenalty} depicts the $L^1$-norm of the computed multipliers $\mu_k$ and a scaled version of the penalty parameter $\rho_k$ during the iterations for all examples examined for a degree of freedom of $10^5$. In all cases, according to Lemma \ref{lemma:multL1_boundedness} the $L^1$-norm is clearly bounded. However we cannot recognize any indication of boundedness of the penalty parameter $\rho$. In fact $\rho$ seems to tend to infinity for all examples.

\begin{figure}[ht]
\centering
\setlength\figureheight{3cm}
\setlength\figurewidth{0.55\textwidth}
%
%
\begin{tikzpicture}

\pgfplotsset{
width=\figurewidth,
height=\figureheight,
at={(0\figurewidth,0\figureheight)},
scale only axis,
xmin=0, xmax=15,
xmajorgrids,
legend style={at={(0.97,0.03)},anchor=south east,legend cell align=left,align=left,draw=white!15!black}
}

\begin{axis}[
  axis y line*=left,
  ymin=0, ymax=0.45,
  xlabel=it,
  ylabel= $\norm{\mu_k}_{L^1(\Omega)}$,
  ymajorgrids
]
\addplot [color=black,solid,line width=1.5pt]
  table[row sep=crcr]{%
1	0.1220501\\
2	0.1220501\\
3	0.1220501\\
4	0.3759769\\
5	0.3759769\\
6	0.3818116\\
7	0.3818116\\
8	0.3818116\\
9	0.3817370\\
10	0.3817370\\
11	0.3817370\\
12	0.3817297\\
13	0.3817297\\
14	0.3817294\\
};
\addlegendentry{${\|\mu_k\|}_{L^1(\Omega)}$};
\end{axis}

\begin{axis}[
  axis y line*=right,
  axis x line=none,
  ymin=1.0, ymax=80000000.0,
  ylabel= $\rho_k$,
  ymode = log,
]
\addlegendimage{/pgfplots/refstyle=plot_one}\addlegendentry{${\|\mu_k\|}_{L^1(\Omega)}$}
\addplot [color=gray,solid,line width=1.5pt]
  table[row sep=crcr]{%
1	0.5000000\\
2	5.0000000\\
3	50.0000000\\
4	50.0000000\\
5	500.0000000\\
6	500.0000000\\
7	5000.0000000\\
8	50000.0000000\\
9	50000.0000000\\
10	500000.0000000\\
11	5000000.0000000\\
12	5000000.0000000\\
13	50000000.0000000\\
14	50000000.0000000\\
};
\addlegendentry{$\rho_k$};
\end{axis}

\end{tikzpicture}
%
%
\begin{tikzpicture}

\pgfplotsset{
width=\figurewidth,
height=\figureheight,
at={(0\figurewidth,0\figureheight)},
scale only axis,
xmin=0, xmax=17,
xmajorgrids,
legend style={at={(0.97,0.03)},anchor=south east,legend cell align=left,align=left,draw=white!15!black}
}

\begin{axis}[
  axis y line*=left,
  ymin=0, ymax=0.5,
  xlabel=it,
  ylabel= $\norm{\mu_k}_{L^1(\Omega)}$,
  ymajorgrids
]
\addplot [color=black,solid,line width=1.5pt]
  table[row sep=crcr]{%
1	0.0754758\\
2	0.0754758\\
3	0.3284037\\
4	0.4140554\\
5	0.4140554\\
6	0.4545461\\
7	0.4559566\\
8	0.4559566\\
9	0.4558518\\
10	0.4563952\\
11	0.4563952\\
12	0.4563952\\
13	0.4593534\\
14	0.4593534\\
15	0.4604717\\
16 0.4607016\\
};

\addlegendentry{${\|\mu_k\|}_{L^1(\Omega)}$};
\end{axis}

\begin{axis}[
  axis y line*=right,
  axis x line=none,
  ymin=1.0, ymax=2000000.0,
  ylabel= $\rho_k$,
  ymode = log
]
\addlegendimage{/pgfplots/refstyle=plot_one}\addlegendentry{${\|\mu_k\|}_{L^1(\Omega)}$}
\addplot [color=gray,solid,line width=1.5pt]
  table[row sep=crcr]{%
1	1.0000000 \\
2	10.0000000 \\
3	10.0000000\\
4	10.0000000\\
5	100.0000000\\
6	100.0000000\\
7	100.0000000\\
8	1000.0000000\\
9	1000.0000000\\
10	1000.0000000\\
11	10000.0000000\\
12	100000.0000000\\
13	100000.0000000\\
14	1000000.0000000\\
15	1000000.0000000\\
16	1000000.0000000\\
};
\addlegendentry{$\rho_k$};
\end{axis}

\end{tikzpicture}
%
%
\begin{tikzpicture}

\pgfplotsset{
width=\figurewidth,
height=\figureheight,
at={(0\figurewidth,0\figureheight)},
scale only axis,
xmin=0, xmax=16,
xmajorgrids,
legend style={at={(0.97,0.03)},anchor=south east,legend cell align=left,align=left,draw=white!15!black}
}

\begin{axis}[
  axis y line*=left,
  ymin=0, ymax=1.2,
  xlabel=it,
  ylabel= $\norm{\mu_k}_{L^1(\Omega)}$,
  ymajorgrids
]
\addplot [color=black,solid,line width=1.5pt]
  table[row sep=crcr]{%
1	0.2049298\\
2	0.2049298\\
3	0.2049298\\
4	0.8513443\\
5	0.8513443\\
6	0.8513443\\
7	0.9545351\\
8	0.9545351\\
9	0.9545351\\
10	0.9879191\\
11	0.9879191\\
12	0.9879191\\
13	0.9949837\\
14	0.9955952\\
15	0.9957147\\
};
\label{plot_one}
\addlegendentry{${\|\mu_k\|}_{L^1(\Omega)}$};
\end{axis}

\begin{axis}[
  axis y line*=right,
  axis x line=none,
  ymin=1.0, ymax=90000000.0,
  ylabel= $\rho_k$,
  ymode = log
]
\addlegendimage{/pgfplots/refstyle=plot_one}\addlegendentry{${\|\mu_k\|}_{L^1(\Omega)}$}
\addplot [color=gray,solid,line width=1.5pt]
  table[row sep=crcr]{%
1	0.5000000\\
2	5.0000000\\
3	50.0000000\\
4	50.0000000\\
5	500.0000000\\
6	5000.0000000\\
7	5000.0000000\\
8	50000.0000000\\
9	500000.0000000\\
10	500000.0000000\\
11	5000000.0000000\\
12	50000000.0000000\\
13	50000000.0000000\\
14	50000000.0000000\\
15	50000000.0000000\\
};
\addlegendentry{$\rho_k$};
\end{axis}

\end{tikzpicture}
\caption{$L^1(\Omega)$-norm of discrete multipliers $\mu_k$, penalty parameters $\rho_k$ vs.\@ iteration number for Example 1 (top), Example 2 (middle), Example 3 (bottom).}
\label{fig:L1andpenalty}
\end{figure}
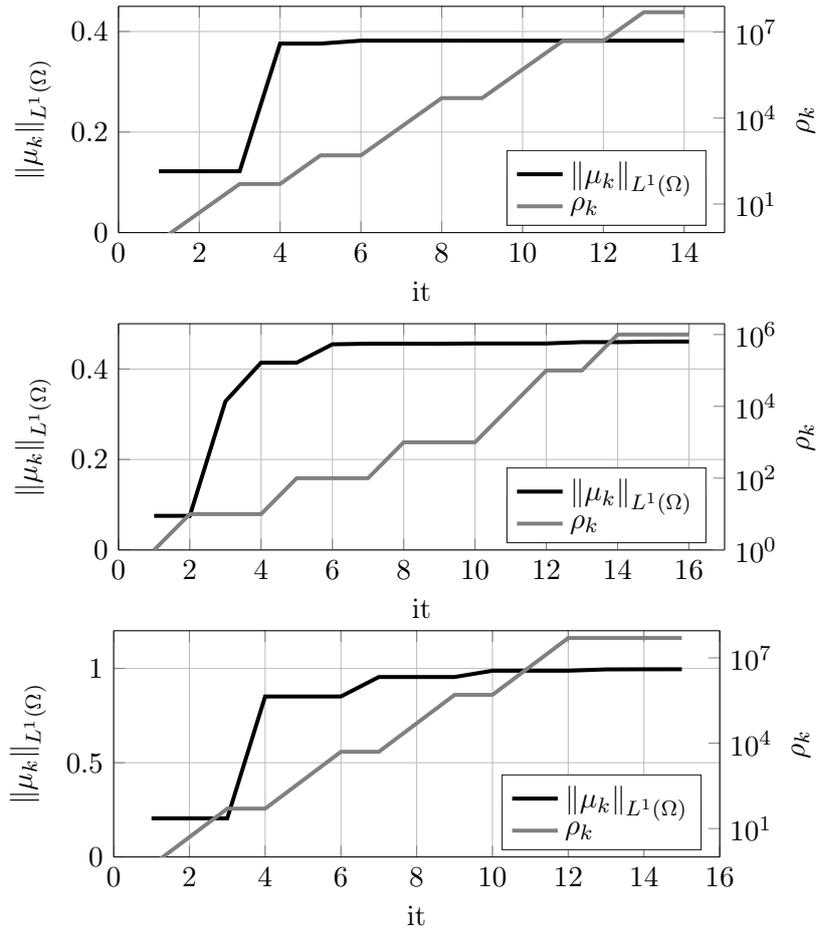

Finally, we want to give an overview about the number of iterations and the final penalization parameter for different refinements of the mesh in all examined examples.
Table \ref{table:it-rhomax} shows the number of outer iterations until the stopping criterion is reached as well as the accumulated inner iterations that are needed to solve the occuring sub-problems using an active set method.
Further, it represents the penalization parameter $\rho_{max}$ after the final iteration
and the $L^1$-norm of the approximated Lagrange multiplier. Table \ref{table:it-rhomax} indicates that a higher mesh refinement postulates a stronger penalization of the augmented Lagrange term in order to reach the stopping criterion.
\renewcommand{\arraystretch}{1.1}

\begin{table}[H]
\centering
\begin{tabularx}{\textwidth}{l cXXXX}
\hline
Degrees of freedom & & $10^2$ & $10^3$ & $10^4$ & $10^5$\\
\hline
	Example 1 
	&it(outer) & 8	& 10	& 12	 & 14	\\
	&it(inner) & 18	& 31	& 37 	& 56	\\
	&$\rho_{max}$  & $5\cdot 10^{1}$ & 	$5\cdot 10^{3}	$	& $5\cdot 10^{5}$	 & $5\cdot 10^{7}$ \\
		&$\norm{\mu_h}_{L^1(\Omega)}$  & $3.8\cdot 10^{-1}$ 	& $3.8\cdot 10^{-1}$  	& $3.8\cdot 10^{-1}$& $3.8\cdot 10^{-1}$ \\
	\hline
		Example 2 
	&it(outer) & 12	& 14	& 15	 & 16	\\
	&it(inner) & 29	& 41	& 48 	& 71	\\
	&$\rho_{max}$  & $ 10^{3}$ & 	$ 10^{5}	$	& $10^{6}$	 & $10^{6}$ \\
		&$\norm{\mu_h}_{L^1(\Omega)}$  & $1.3$ 	& $3.5\cdot 10^{-1}$  	& $4.5\cdot 10^{-1}$& $4.6\cdot 10^{-1}$ \\
	\hline
		Example 3 
	&it(outer) & 13	& 12	& 13	 & 15	\\
	&it(inner) & 26	& 32	& 51 	& 63	\\
	&$\rho_{max}$  & $ 5\cdot 10^{4}$ & 	$ 5\cdot 10^{5}	$	& $5\cdot 10^{6}$	 & $5\cdot 10^{7}$ \\
		&$\norm{\mu_h}_{L^1(\Omega)}$  & $7.9\cdot 10^{-1}$ 	& $9.2\cdot 10^{-1}$  	& $9.8\cdot 10^{-1}$& $9.96\cdot 10^{-1}$ \\
	\hline
\end{tabularx}
\caption{Iteration history for different discretizations}
\label{table:it-rhomax}
\end{table}

\bibliography{mybib}
\bibliographystyle{plain_abbrv}

\end{document}